\newtheorem{thm}{Theorem}[section]
\newtheorem*{thm*}{Theorem}
\newtheorem*{mthm*}{Main Theorem}
\newtheorem{lemma}[thm]{Lemma}
\newtheorem{cor}[thm]{Corollary}
\newtheorem{prop}[thm]{Proposition}
\newtheorem*{conjecture*}{Conjecture}
\newtheorem*{question*}{Question}
\theoremstyle{definition}
\newtheorem{defi}[thm]{Definition}
 \newtheorem{example}[thm]{Example}
  \newtheorem*{example*}{Example}
\theoremstyle{remark}
\newtheorem{remark}[thm]{Remark}
\newcommand{\Ac}{\mathcal{A}}
\newcommand{\BB}{\mathcal{B}}
\newcommand{\CC}{\mathcal{C}}
\newcommand{\DD}{\mathcal{D}}
\newcommand{\EE}{\mathcal{E}}
\newcommand{\FF}{\mathcal{F}}
\newcommand{\PP}{\mathcal{P}}
\newcommand{\KK}{\mathcal{K}}
\newcommand{\MM}{\mathcal{M}}
\newcommand{\NN}{\mathcal{N}}
\DeclareMathOperator{\Sd}{Sd}
\DeclareMathOperator{\const}{const}
\DeclareMathOperator{\diag}{diag}
\DeclareMathOperator{\we}{we}
\DeclareMathOperator{\Cat}{Cat}
\DeclareMathOperator{\Ex}{Ex}
\DeclareMathOperator{\Ob}{Ob}
\DeclareMathOperator{\id}{id}
\DeclareMathOperator{\Ho}{Ho}
\DeclareMathOperator{\RelCat}{RelCat}
\DeclareMathOperator{\Hom}{Hom}
\DeclareMathOperator{\map}{map}
\DeclareMathOperator{\holim}{holim}
\DeclareMathOperator{\sCat}{sCat}
\DeclareMathOperator{\sSet}{sSet}
\DeclareMathOperator{\ssSet}{ssSet}
\DeclareMathOperator{\Map}{Map}
\newcommand{\un}{\underline{n}}
\newcommand{\setms}{\,\setminus\,}
\newcommand{\unn}{\un \setminus \{k\}}
\begin{document}
\title{Fibration Categories are Fibrant Relative Categories}
\author{Lennart Meier}
\begin{abstract}
A relative category is a category with a chosen class of weak equivalences. Barwick and Kan produced a model structure on the category of all relative categories, which is Quillen equivalent to the Joyal model structure on simplicial sets and the Rezk model structure on simplicial spaces. We will prove that the underlying relative category of a model category or even a fibration category is fibrant in the Barwick--Kan model structure. 
\end{abstract}
\maketitle

\section{Introduction}
Abstract homotopy theory comes nowadays in a variety of flavors. A traditional and very rich version is Quillen's theory of model categories, first developed in \cite{Qui67}. More recently, various versions of $\infty$-categories, like Joyal's quasi-categories and Rezk's complete Segal spaces, came into fashion. We will concentrate in this article on maybe the most naive flavor: relative categories. 

A relative category is a category with a chosen class of morphisms, called \emph{weak equivalences}, closed under composition and containing all identities. Despite the apparent simplicity of the definition, only recently Barwick and Kan developed in \cite{B-K12R} a satisfactory homotopy theory of relative categories by exhibiting a model structure on the category $\RelCat$ of (small) relative categories. This model category is Quillen equivalent to the Joyal model structure on simplicial sets and the Rezk model structure on simplicial spaces. 

More precisely, Barwick and Kan consider functors
\[N, N_\xi\colon \RelCat \to \ssSet\]
into simplicial spaces, where $N$ is the Rezk classifying diagram and $N_\xi$ is a variant of it, involving double-subdivision. They lift then the Rezk model structure from $\ssSet$ to $\RelCat$ along $N_\xi$. This is analogous, though technically more demanding, to the Thomason model structure on the category $\Cat$ of (small) categories that is lifted from the standard model structure on $\sSet$ along $\Ex^2\mathrm{Nerve}$. 

Both in the Joyal and in the Rezk model structure the fibrant objects deserve special attention: These are called quasi-categories and complete Segal spaces, respectively. An equally good understanding of the fibrant objects in the Barwick--Kan model structure on $\RelCat$ remains elusive to this day. We will prove, however, a \emph{sufficient} criterion for fibrancy. 

\begin{mthm*}
The underlying relative category of a fibration category $\MM$ is fibrant in the Barwick--Kan model structure. 
\end{mthm*}
More generally, every \emph{homotopically full subcategory} of a fibration category is fibrant as a relative category. Recall here that a fibration category is a generalization of a model category, having just fibrations and weak equivalences and no cofibrations. We will use the term essentially in the sense of \emph{cat\'egories d\'erivables \`a gauche} in \cite{Cis10}. A homotopically full subcategory is a full subcategory closed under the relation of weak equivalence.

In \cite{B-K13}, Barwick and Kan show that every relative category is in their model structure weakly equivalent to a homotopically full subcategory of a model category via a Yoneda-type embedding. Therefore, our results imply that they actually construct an explicit fibrant replacement functor in $\RelCat$. Our main result also allows a rather simple construction of the quasi-category associated to a model category (see Remark \ref{quasi}).

Our main result is equivalent to the statement that $N_\xi \MM$ is a complete Segal space for $\MM$ a fibration category. In a sequence of papers \cite{Rez01}, \cite{Ber09} and \cite{B-K13}, it was shown that \emph{a Reedy fibrant replacement of} $N\MM$ is a complete Segal space if $\MM$ is a (partial) model category. This was generalized in \cite{LMG14} and using this, Low showed in \cite{Low15} that the analogous statement is also true for a fibration category $\MM$. As Barwick and Kan showed in \cite{B-K12R} that there is a Reedy weak equivalence $N\MM \to N_\xi\MM$ for any relative category $\MM$, it remained to show that $N_\xi\MM$ is Reedy fibrant. This is the contribution of the present paper. 

\begin{thm*}
If $\MM$ is a fibration category, then $N_\xi\MM$ is Reedy fibrant. 
\end{thm*}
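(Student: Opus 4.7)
My strategy is to translate Reedy fibrancy of $N_\xi\MM$ into a right lifting property of $\MM\to *$ in $\RelCat$ and to solve the resulting lifting problems by a cellular induction, invoking one axiom of a fibration category per elementary cell.

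Since $N_\xi$ is representable in each bidegree by some bicosimplicial relative category $\xi[-,-]$, Reedy fibrancy of $N_\xi\MM$ translates into a condition on the inclusions of relative categories
\[ H^{n,k}_m \hookrightarrow \xi[n,m], \]
one for each $n\geq 0$, $m\geq 1$, and $0 \leq k \leq m$, where $H^{n,k}_m$ is obtained from $\xi[-,-]$ by forming the colimit corresponding to the horn inclusion $(\partial\Delta[n]\times\Delta[m])\cup(\Delta[n]\times\Lambda^k[m])\hookrightarrow \Delta[n]\times\Delta[m]$ in $\ssSet$. Reedy fibrancy is equivalent to the assertion that each such inclusion has the left lifting property against $\MM\to *$. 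The first step of the proof is therefore the bookkeeping necessary to unwind the matching-object description and isolate these particular inclusions.

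The core of the argument is to filter each inclusion $H^{n,k}_m \hookrightarrow \xi[n,m]$ as a composite of pushouts along elementary generating cofibrations of $\RelCat$, of three types: (a) freely adjoining a new object, (b) freely adjoining a new morphism, possibly marked as a weak equivalence, and (c) freely adjoining a commutativity relation between parallel composites. Each such elementary cell admits a lift into $\MM$ by a single axiom of a fibration category: a new object can be provided by any object already placed; an arbitrary arrow is given by composition of existing data; a \emph{weak equivalence} into an already-placed codomain $Y$ is produced by factoring some morphism with target $Y$ as a weak equivalence followed by a fibration and taking the intermediate object; and a commutativity relation involving a fibration is realized as a pullback of that fibration.

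The principal obstacle --- and the point where the double subdivision built into $\xi$ becomes indispensable --- lies in the filtration step: one must exhibit a cellular decomposition in which every newly adjoined weak equivalence has its codomain already in place while its domain is the fresh object (the direction opposite to what a cofibration category would require), and in which every new commutativity relation is a pullback along a fibration already constructed. Subdivision is engineered so that the genuinely new arrows of $\xi[n,m]$ relative to $H^{n,k}_m$ point from finer refinements (the new objects) down to coarser refinements (already placed) and are marked as weak equivalences, providing exactly the orientation the fibration category axioms demand. Once this filtration is exhibited the elementary lifts assemble to a map $\xi[n,m]\to\MM$ extending the given $H^{n,k}_m\to\MM$, proving Reedy fibrancy. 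The strengthening to a homotopically full subcategory of a fibration category is then immediate, since closure under weak equivalence guarantees that every object or morphism produced by the axioms remains inside the subcategory.
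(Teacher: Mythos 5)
Your high-level strategy of translating Reedy fibrancy into a right lifting property of $\MM \to *$ in $\RelCat$ is correct and matches the paper's starting point (Lemma~\ref{LiftingLemma1}). But the core of your proposal --- decomposing each inclusion $H^{n,k}_m\hookrightarrow \xi[n,m]$ into elementary pushouts of type (a), (b), (c) and solving each by a single fibration-category axiom --- is precisely the part you do not carry out, and I do not believe it can be carried out as stated, for two reasons.

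First, coherence. You propose to adjoin new objects and morphisms one at a time, using (F5) to produce a weak equivalence here and (F4) to produce a pullback there. But $\xi[n,m]$ contains many composable strings of new morphisms, and the resulting lifts must agree on composites. The ``cone point'' of $\KK(\DD)$ (which appears inside $\xi[n,m]$) maps to \emph{all} objects of $\DD\times\{1\}$ simultaneously, so its image must be a cone over that whole diagram, compatibly. A fibration category does not have functorial factorizations, and a single application of (F5) produces an isolated factorization with no guarantee of compatibility with previous choices. The paper sidesteps this by constructing the entire extension at once: Reedy fibrant replacement of $F$ followed by the honest limit (Proposition~\ref{ExtensionProp}). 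There is no obvious way to reproduce this coherence step-by-step along elementary cells.

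Second, even granting coherence, your cell types do not suffice. A ``commutativity relation realized as a pullback'' does not produce an arbitrary commuting square; a pullback produces a universal terminal object, not a filler for a pre-specified square. And your claim that every newly adjoined arrow of $\xi[n,m]$ over $H^{n,k}_m$ is a weak equivalence pointing from finer to coarser refinement is false: the relative structure on $\xi(\underline{\check p}\times\underline{\hat q})$ marks only certain arrows as weak equivalences (those detected by the last/initial-vertex map), so the new arrows are a mix of weak equivalences and non-weak-equivalences.

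What the paper actually does, and what your proposal is missing, is a two-stage reduction: first a genuine cellular filtration (a box-product argument on marked simplicial sets, in Lemma~\ref{LiftingLemma2}) that reduces all matching-object lifting problems to the single family $c\Sd^2\Lambda^k[n] \to \KK(c\Sd^2\Lambda^k[n])$; then a homotopy-limit construction (Propositions~\ref{ExtensionProp} and~\ref{HolimProp}) that solves that family all at once. Proving that the resulting extension is a \emph{relative} functor --- i.e.\ that $\holim_{\DD} F \to F(0)$ is a weak equivalence --- requires the nontrivial contractibility calculations of Section~\ref{CLSection}, which have no counterpart in your sketch. The filtration you assert without proof would need to replace all of this, and I see no path to making it work.
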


 Our proof uses ideas from \cite{M-O14}, where we show that the category of weak equivalences of a partial model category is fibrant in the Thomason model structure. 
 
 Note that there is a conjugate of the Barwick--Kan model structure, using a conjugate definition of double-subdivision. The underlying relative categories of \emph{cofibration} categories are fibrant object in this conjugate model structure. \\

We give a short overview of the structure of the article. In Section \ref{BKSection}, we will recall notation and concepts from the theory of (relative) categories. In Section \ref{FibSection} we will treat fibration categories and homotopy limits in them. In Section \ref{MCFSection}, we will give the main steps of our proofs of the two theorems above. In Section \ref{GFSection}, we will provide a proof for the fibrancy criterion used in Section \ref{MCFSection}. In Section \ref{CLSection}, we will give some leftover proofs about the contractibility of certain subsets of simplices. 

\subsection*{Acknowledgments}
This note grew out of collaboration with Viktoriya Ozornova. I thank her for many helpful discussions, for reading earlier versions of this material and for the resulting suggestions that substantially improved exposition and content of this paper. I also thank Zhen Lin Low for a helpful email exchange. 

\section{Homotopy Theory of (Relative) Categories}\label{BKSection}
In this section, we will recall the definition of the Thomason model structure on the category of small categories and of the Barwick--Kan model structure on the category of small relative categories.  \\

Thomason constructed a model structure on the category of small categories $\Cat$:

\begin{thm}\cite{ThomCat} There is a model structure on $\Cat$, where a map $f$ is a weak equivalence/fibration if and only if $\Ex^2\mathrm{Nerve}(f)$ is a weak equivalence/fibration. Equivalently, $f$ is a weak equivalence if and only if $\mathrm{Nerve}(f)$ is a weak equivalence. \end{thm}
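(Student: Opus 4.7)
The plan is to apply Kan's transfer (lifting) theorem to the adjunction
\[
c \circ \Sd^2 \colon \sSet \rightleftarrows \Cat \colon \Ex^2 \circ \mathrm{Nerve},
\]
where $c$ is the left adjoint of the nerve (the fundamental-category functor) and $\Sd \dashv \Ex$ is the barycentric subdivision/extension adjunction. Starting from the Kan--Quillen model structure on $\sSet$, one declares a morphism $f$ in $\Cat$ to be a fibration (respectively a weak equivalence) exactly when $\Ex^2 \mathrm{Nerve}(f)$ is one in $\sSet$, and takes cofibrations to be the maps with the left lifting property against trivial fibrations. The generating (trivial) cofibrations on the $\Cat$ side will then be the images $c\Sd^2(\partial\Delta^n \hookrightarrow \Delta^n)$ and $c\Sd^2(\Lambda^n_k \hookrightarrow \Delta^n)$.

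Next I would verify the hypotheses of Kan's transfer theorem. The category $\Cat$ is complete and cocomplete, and the domains of the generating sets are finite (in fact small), so the small object argument applies. The nontrivial hypothesis is that every transferred generating trivial cofibration is a weak equivalence in the transferred sense after any pushout in $\Cat$; equivalently, the functor $\Ex^2 \mathrm{Nerve}$ should send such pushouts to weak equivalences in $\sSet$. Here the key structural input is that $c\Sd^2 \Delta^n$ is a \emph{poset} (the poset of chains of nonempty faces of nonempty faces of $[n]$), and similarly for the boundary and horn inclusions. This reduces the problem to a statement about honest pushouts of posets along well-behaved monomorphisms, which one can analyze by a direct combinatorial deformation retraction argument to show that the inclusions in question are "Dwyer-like," so that pushouts along them are homotopy pushouts and weak equivalences are preserved.

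Once the transfer produces a model structure in which fibrations and weak equivalences are characterized by $\Ex^2 \mathrm{Nerve}$, the second clause is formal: since the natural map $X \to \Ex X$ is a weak equivalence in $\sSet$ for every simplicial set $X$, the naturality square together with two-out-of-three implies that $\mathrm{Nerve}(f)$ is a weak equivalence if and only if $\Ex^2\mathrm{Nerve}(f)$ is.

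The main obstacle, and the technical heart of the proof, is the acyclicity condition on pushouts of the transferred generating trivial cofibrations. Writing down $c\Sd^2\Delta^n$ concretely as a poset and exhibiting the explicit retractions/homotopies that show $c\Sd^2(\Lambda^n_k) \hookrightarrow c\Sd^2(\Delta^n)$ behaves like a cofibration with contractible "normal direction" after taking nerves is the delicate combinatorial part; the double subdivision (rather than a single one) is precisely what is needed to ensure that these inclusions have enough rigidity for the nerve of a pushout in $\Cat$ to compute the correct homotopy type. I would isolate this as a separate lemma and then plug it into the transfer theorem to conclude.
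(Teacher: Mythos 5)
The paper states this theorem purely as a citation to Thomason and offers no proof, and your sketch is a faithful outline of Thomason's actual argument: transfer along the adjunction $c\Sd^2 \dashv \Ex^2\mathrm{Nerve}$, with the acyclicity condition for pushouts of $c\Sd^2(\Lambda^k[n]\hookrightarrow\Delta[n])$ handled by the Dwyer-map machinery (these inclusions are sieves of posets with a deformation-retractable cosieve neighborhood, which is exactly what the double subdivision buys and what makes nerves of pushouts compute homotopy pushouts), and the final clause following from the natural weak equivalence $X\to\Ex X$ plus two-out-of-three. I see no gap; the only caveat worth knowing is Cisinski's later observation that Dwyer maps are not closed under retracts, which affects Thomason's characterization of cofibrations but not the existence argument you outline.
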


Here, $\Ex$ denotes the right adjoint of the subdivision functor $\Sd\colon \sSet \to \sSet$. 

The functor $\mathrm{Nerve}\colon \Cat \to \sSet$ has a left adjoint $c\colon \sSet \to \Cat$, called the \textit{fundamental category functor}. For example, $c\Delta[n] = \un$, the category of $n$ composable morphisms. This defines a Quillen equivalence
\[\xymatrix{ \sSet \ar@/^/[r]^{c\Sd^2} & \Cat \ar@/^/[l]^{\Ex^2\mathrm{Nerve}} }\]
to the Kan model structure on simplicial sets.\\

Barwick and Kan construct an analogous model structure on the category of small relative categories. 

\begin{defi}A \textit{relative category} $\MM$ is a category $\MM$ together with a subcategory $\we \MM$ containing all objects of $\MM$. The morphisms in $\we \MM$ are usually called \textit{weak equivalences}. A \textit{relative functor} between relative categories $\MM$ and $\MM'$ is a functor $F\colon \MM \to \MM'$ with $F(\we \MM)\subset \we \MM'$. We denote the category of (small) relative categories with relative functors between them by $\RelCat$. 
\end{defi}
\begin{remark}
 As we want later to view model categories as objects in $\RelCat$, the usual size issues come up. Two possible solutions are sketched in the introduction of \cite{M-O14} and a more extensive treatment can be found in \cite{ShuCat}. We will ignore these issues in the rest of this article. 
\end{remark}

Given a relative category $(\MM, \we\MM)$, we denote by $\Ho(\MM)$ its \emph{homotopy category}, i.e.\ the localization of $\MM$ at $\we \MM$.

Given a category $\CC$, we denote by $\widehat{\CC}$ its \textit{maximal} relative structure, where every morphism is a weak equivalence, and by $\check{\CC}$ its \textit{minimal} relative structure, where only identities are weak equivalences.\\

We want to define functors $N$ and $N_\xi$ from $\RelCat$ to the category of simplicial spaces $\ssSet$, where we mean by a simplicial space a bisimplicial set. To this purpose, we first have to talk about subdivision of relative posets, i.e.\ posets with the structure of a relative category. In the following, let $\PP$ be a relative poset. 

\begin{defi}[Terminal and initial subdivisions] The \emph{terminal (resp.\ initial) subdivision} of $\PP$ is the relative poset $\xi_t\PP$ (resp.\ $\xi_i\PP$) which has
\begin{enumerate}
 \item  as objects the relative functors $\check{\un} \to \PP$ that are monomorphisms, for $n\geq 0$,
\item as maps $(x_1\colon \check{\un}_1 \to \PP) \to (x_2\colon \check{\un}_2 \to \PP)$ the commutative diagrams of the form
\[\xymatrix{\check{\un}_1\ar[rr]\ar[dr]_{x_1} && \check{\un}_2 \ar[dl]^{x_2}\\
 &\PP& }
\]
for the terminal subdivision and the commutative diagrams of the form 
\[\xymatrix{\check{\un}_2\ar[rr]\ar[dr]_{x_2} && \check{\un}_1 \ar[dl]^{x_1}\\
 &\PP& }
\]
for the initial subdivision.
\item as weak equivalences those of the above diagrams for which the induced
map $x_1(n_1) \to x_2(n_2)$ (resp. $x_2(0) \to x_1(0)$) is a weak equivalence in $\PP$.
\end{enumerate}
The \emph{double-subdivision} $\xi\PP$ is defined as $\xi_t\xi_i\PP$.
\end{defi}

In other words: The subdivision has as objects ascending chains in $\PP$ and the terminal and initial versions correspond to two ways these can be partially ordered. For the terminal subdivision, the last-vertex map
\[\xi_t\PP \to \PP,\quad (x\colon \check{\un} \to \PP) \mapsto x(n)\]
detects the weak equivalences. For the initial subdivision, the initial-vertex map
\[\xi_i\PP \to \PP,\quad (x\colon \check{\un} \to \PP) \mapsto x(0)\]
detects the weak equivalences. Composing last- and initial-vertex map defines a natural transformation $\xi \to \id$.

\begin{example}\label{undescription}Let $\un$ be equipped with an arbitrary relative structure. An object of $\xi(\un)$ can be identified with an ascending non-empty chain of non-empty subsets of $\{0,\dots ,n\}$. If we can build a chain
\[A_\bullet =( A_0\subsetneq \cdots \subsetneq A_m)\]
from a chain
\[B_\bullet = (B_0\subsetneq \cdots \subsetneq B_l)\]
 by adding subsets, then $B_\bullet\leq A_\bullet$. The corresponding morphism is a weak equivalence if $\phi(A_\bullet)\simeq \phi(B_\bullet)$ in $\un$, where $\phi$ is the functor $\xi(\un) \to \un$ sending a chain $A_\bullet$ to the smallest element of $A_0$. 
 
 Note here that $A_0$ is the largest element in the chain $A_\bullet$ in $\xi_i\un$ -- but our numbering system seems more natural to the author than the opposite one. 
 
 Note furthermore that $\mathrm{Nerve}\,\xi(\un)$ is isomorphic to the double barycentric subdivision $\Sd^2\Delta[n]$. It follows that the underlying category of $\xi(\un)$ is isomorphic to $c\Sd^2\Delta[n]$ as $c\,\mathrm{Nerve} \cong \id_{\Cat}$. 
 \end{example}

Next, we define the classifying diagram of a relative category, an analogue of the nerve functor.

\begin{defi}For a relative category $\MM$, we define its \textit{classifying diagram} to be the simplicial space $N\MM$ with $(N\MM)_{pq} = \RelCat(\underline{\check{p}}\times\underline{\hat{q}}, \MM)$. Likewise, we define $N_\xi\MM$ to be the simplicial space with $(N_\xi\MM)_{pq} = \RelCat(\xi(\underline{\check{p}}\times\underline{\hat{q}}), \MM)$.\end{defi}
The natural transformation $\xi \to \id$ induces a natural weak equivalence $N \to N_\xi$, as shown in \cite{B-K12R}. 

Rezk defines in \cite{Rez01} a model structure on the category of simplicial spaces $\ssSet$, where the fibrant objects are the complete Segal spaces. He constructs it as a localization of the usual Reedy model structure. Barwick and Kan lift their model structure on $\RelCat$ from the Rezk model structure on $\ssSet$. 

\begin{thm}\cite{B-K12R} There is a model structure on $\RelCat$, where a map $f$ is a weak equivalence/fibration if and only if $N_\xi f$ is a weak equivalence/fibration in the Rezk model structure on $\ssSet$.\end{thm}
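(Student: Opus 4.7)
The plan is to apply Kan's transfer principle to lift the Rezk model structure on $\ssSet$ to $\RelCat$ along the functor $N_\xi$. Explicitly, I would declare $f$ in $\RelCat$ to be a weak equivalence, respectively a fibration, precisely when $N_\xi f$ is so in the Rezk structure on $\ssSet$, and take the cofibrations to be characterized by the left lifting property against the trivial fibrations. The remaining task is to verify the hypotheses of the transfer theorem for $N_\xi$.

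First I would set up the surrounding adjunction infrastructure. The category $\RelCat$ is locally presentable (being equivalent to the category of models for a small limit sketch encoding a category together with a subcategory containing all objects and closed under composition), hence bicomplete. The functor $N_\xi$ is corepresented by the diagram $([p],[q]) \mapsto \xi(\underline{\check{p}} \times \underline{\hat{q}})$, so its left adjoint $K_\xi\colon \ssSet \to \RelCat$ exists and can be computed as the coend $K_\xi(X) = \int^{[p],[q]} X_{pq} \cdot \xi(\underline{\check{p}} \times \underline{\hat{q}})$. Because each corepresenting relative category is finite, $N_\xi$ commutes with filtered colimits, so the small object argument applies without issue to the $K_\xi$-images of any generating set from the Rezk structure.

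The heart of the proof is verifying the acyclicity condition of Kan's transfer: if $J$ denotes a set of generating trivial cofibrations for the Rezk structure on $\ssSet$, then every relative $K_\xi(J)$-cell complex must be sent by $N_\xi$ to a Rezk weak equivalence. A convenient choice of $J$ is the union of (i) generating Reedy trivial cofibrations in $\ssSet$, (ii) horizontal Segal inclusions, suitably pushout-producted with $\partial\Delta[m] \hookrightarrow \Delta[m]$, and (iii) a completeness-witnessing map built from the classifying diagram of the free-living isomorphism. For each family one analyzes the pushout in $\RelCat$ of the image under $K_\xi$ and argues that $N_\xi$ applied to the result is a Rezk weak equivalence.

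The main obstacle is precisely this verification, since Rezk trivial cofibrations cannot be detected strictly objectwise and there is no left properness of $\RelCat$ to fall back on. The essential tool is the contracting property of the double-subdivision $\xi$: since the natural map $\xi\MM \to \MM$ always induces a Rezk equivalence $N\MM \to N_\xi\MM$, there is sufficient coherent slack to produce, for each $j \in J$, an explicit factorization of $K_\xi(j)$ whose constituent maps admit Reedy-level trivialization data after application of $N_\xi$, and for which pushouts along them can be analyzed levelwise. Organizing these factorizations into an ``anodyne'' class closed under pushouts and transfinite composition, in the spirit of Cisinski's theory of test categories, then yields the required acyclicity and completes the application of the transfer theorem. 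The two iterations of $\xi$ play here precisely the role that the two iterations of $\Sd$ play in Thomason's original argument for $\Cat$: a single subdivision does not provide enough slack to produce the required deformation data coherently, while a second pass does.
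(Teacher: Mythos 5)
The paper offers no proof of this statement: it is quoted verbatim from Barwick--Kan \cite{B-K12R}, so your proposal can only be measured against their argument. You have correctly identified the overall strategy --- transfer the Rezk model structure along the adjunction $K_\xi \dashv N_\xi$ --- and your setup (local presentability of $\RelCat$, existence of $K_\xi$ as a coend, finiteness of the corepresenting objects) is fine. The gaps are concentrated exactly where all the work lies, namely the acyclicity condition.

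First, your ``convenient choice of $J$'' is not legitimate. The Rezk structure is a left Bousfield localization of the Reedy structure, and the generating trivial cofibrations of a localization are \emph{not} the union of the Reedy generating trivial cofibrations with the (pushout-producted) Segal and completeness maps; that union is merely the set one localizes at, and a genuine generating set is only known to exist abstractly by a cardinality argument. So acyclicity cannot be checked by running through your three families. Second, and more seriously, the core of your argument is asserted rather than proved: ``sufficient coherent slack'' and ``Reedy-level trivialization data'' name no mechanism, and the appeal to a Cisinski-style anodyne class is not something you construct. What actually makes the transfer close in \cite{B-K12R} is the pair of facts that (i) the unit $X \to N_\xi K_\xi X$ is a Reedy weak equivalence for \emph{every} bisimplicial set $X$ (this is where the two iterations of $\xi$ are genuinely needed, in parallel with $\Sd^2$ in Thomason's argument, so your closing remark points in the right direction), and (ii) $K_\xi$ of a monomorphism is a Dwyer map in the relative-category sense, so that pushouts in $\RelCat$ along such maps are carried by the classifying diagram to homotopy pushouts. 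Together these allow one to transport an arbitrary trivial cofibration across the adjunction and conclude that $N_\xi$ of a relative $K_\xi(\text{trivial cofibration})$-cell complex is a weak equivalence. Without the Dwyer-map property, or some substitute controlling pushouts in $\RelCat$ (which is far from left proper and whose colimits interact badly with $N$ and $N_\xi$), the transfer argument does not go through.
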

Note here that $N_\xi f$ is a weak equivalence if and only if $Nf$ is one, but there is no analogous statement for fibrations. Barwick and Kan also show in \cite{B-K12A} that $f\colon \MM \to \NN$ is a weak equivalence in $\RelCat$ if and only if it induces a Dwyer--Kan equivalence of the hammock localizations
\[L^H\MM \to L^H\NN,\]
 i.e.\ an equivalence of homotopy categories and weak equivalences of mapping spaces.

The functors 
\[N, N_\xi\colon \RelCat \to \ssSet\]
have left adjoints $K$ and $K_\xi$, respectively. These are the unique colimit-preserving functors with $K(\Delta[p,q]) = \underline{\check{p}}\times\underline{\hat{q}}$ and $K_\xi(\Delta[p,q]) = \xi(\underline{\check{p}}\times\underline{\hat{q}})$, respectively. Here, $\Delta[p,q]$ is the bisimplicial set whose $m$-$n$-simplices $\Delta[p,q]_{mn}$ are the set of maps $(m,n) \to (p,q)$ in $\Delta \times \Delta$. We have diagrams
\[\xymatrix{\ssSet \ar[rr]^{K}\ar[d]^{\mathrm{diag}} && \RelCat \ar[d]^{u} \\
\sSet \ar[rr]^{c} && \Cat
}\] 
and
\[\xymatrix{\ssSet \ar[rr]^{K_\xi}\ar[d]^{\mathrm{diag}} && \RelCat \ar[d]^{u} \\
\sSet \ar[rr]^{c\Sd^2} && \Cat
}\] 
that are commutative up to natural isomorphism, where $u$ denotes the forgetful functor and $\diag(X_{\bullet \bullet})_m = X_{mm}$. Indeed, we have 
\begin{align*}uK\Delta[p,q] & \cong \underline{p}\times\underline{q} \cong c\,\mathrm{Nerve}(\underline{p}\times\underline{q}) \cong c(\Delta[p]\times \Delta[q]) \cong c\diag\Delta[p,q],\\
uK_\xi\Delta[p,q] &\cong u\xi(\underline{\check{p}}\times\underline{\hat{q}}) \cong c\Sd^2(\Delta[p]\times\Delta[q]) \cong c\Sd^2\diag\Delta[p,q] \end{align*}
and $u$ and $\diag$ are both colimit-preserving as $u$ is left adjoint to the functor $\CC \mapsto \widehat{\CC}$.  

\begin{remark}\label{quasi}
The functor $N_\xi\colon \RelCat \to \ssSet$ is actually a right Quillen equivalence from the Barwick--Kan model structure to the Rezk model structure, as shown in \cite[Theorem 6.1]{B-K12R}. There is a further Quillen equivalence
\[\xymatrix{ \sSet \ar@/^/[r]^{p_1^*} & \ssSet \ar@/^/[l]^{i_1^*} }\]
between the Joyal model structure and the Rezk model structure, as shown in \cite{J-T07}. The functor $i_1^*$ assigns to a bisimplicial set $X_{\bullet\bullet}$ its zeroth row $X_{\bullet 0}$. In particular, we have
\[(i_1^*N_\xi \CC)_p = \RelCat(\xi(\underline{\check{p}}), \CC).\]

If $\CC$ is fibrant, $i_1^*N_\xi\CC$ is fibrant in the Joyal model structure, i.e.\ a quasi-category (also known as an $\infty$-category in \cite{Lur09}). As our main theorem states that the underlying relative category of every fibration category is fibrant, this gives a model for the quasi-category associated with a fibration category. 

As explained in \cite{93139}, results by To\"en and Barwick--Kan imply that this is equivalent to other quasi-categories associated with $\MM$, in particular the quasi-category $N_c(L^H\MM)^f$, where $L^H$ is the hammock localization, $f$ denotes a fibrant replacement in the Bergner model structure on simplicial categories and $N_c\colon \sCat \to \sSet$ denotes the coherent nerve. 
\end{remark}

\section{Fibration Categories and Homotopy Limits}\label{FibSection}
Relative categories without extra structure are often hard to work with. Therefore, several mathematicians introduced more structured versions like model categories or fibration categories. We will work with the following definition of a fibration category: 
\begin{defi}
A \textit{fibration category} is a relative category $(\MM, \we \MM)$ together with a subcategory $\FF\subseteq \MM$ of \textit{fibrations}, fulfilling the following axioms:
\begin{itemize}
\item[(F1)] $\MM$ has a terminal object $\ast$. We call an object $x\in \MM$ \emph{fibrant} if $x\to \ast$ is a fibration. We assume $\ast$ to be fibrant. 
\item[(F2)] All isomorphisms are weak equivalences and all isomorphisms with fibrant codomain are fibrations.
\item[(F3)] Let $f$, $g$ and $h$ be composable morphisms. If $gf$ and $hg$ are weak equivalences, then also $f$, $g$ and $h$.
\item[(F4)] Let $f\colon A\to C$ be a morphism between fibrant objects. If $p\colon B\to C$ is a (trivial) fibration, then the pullback $B\times_C A$ exists and the map $B\times_C A \to A$ is also a (trivial) fibration.  Here, a fibration is called a \emph{trivial} if it is a weak equivalence.
\item[(F5)] Any map $f\colon A \to C$ with $C$ fibrant factors as 
\[A \xrightarrow{s} B \xrightarrow{p} C \]
with $s$ a weak equivalence, $p$ a fibration and $ps = f$. 
\end{itemize}
\end{defi}
This agrees essentially with the notion of a \emph{cat\'egories d\'erivables \`a gauche} in the sense of \cite{Cis10} and with the notion of an (ABC) prefibration category from \cite{RadCofibrations}, only that we ask for a $2$ out of $6$ axiom instead of $2$ out of $3$. The latter source also discusses the relationship of this definition with other notions of fibration categories. In particular, every model category is a fibration category by forgetting the cofibrations. 


Next, we will define homotopy limits of diagrams in fibration categories indexed over an arbitrary \emph{finite inverse category}. We follow the treatments in \cite[Sections 1 and 2]{Cis10}\footnote{Beware that Cisinski uses finite \emph{direct} categories as he consideres presheaves instead of covariant functors. Note also that he calls a Reedy fibration \emph{fibration bord\'ee}.} and \cite[Chapter 9]{RadCofibrations}. 

We fix in the following a finite inverse category $\DD$ and a fibration category $(\MM,\we \MM,\FF)$. Then there is a \emph{Reedy fibration category structure} on the functor category $\MM^\DD$ with weak equivalences defined objectwise and Reedy fibrations as fibrations (\cite[Th\'eor\`eme 1.30]{Cis10}).
\begin{thm}[\cite{Cis10}, Proposition 2.6]
The constant diagram functor 
\[\const_\DD\colon \Ho(\MM) \to \Ho(\MM^\DD)\]
 has a right adjoint $\holim_\DD$.
\end{thm}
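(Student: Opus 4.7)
The plan is to work entirely inside the Reedy fibration category structure on $\MM^\DD$ and define $\holim_\DD$ as the composition of a Reedy fibrant replacement with the ordinary categorical limit, then descend the point-set adjunction $\const_\DD \dashv \lim_\DD$ to the respective homotopy categories.

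First I would verify that for a Reedy fibrant diagram $X\colon\DD\to\MM$ the ordinary limit $\lim_\DD X$ exists in $\MM$, and that $\lim_\DD$ sends weak equivalences between Reedy fibrant diagrams to weak equivalences. This is an induction on the degree filtration of the finite inverse category $\DD$: choose an object $d$ of maximal degree, let $\DD'=\DD\setminus\{d\}$, and observe that
\[\lim_\DD X \;\cong\; \lim_{\DD'} X \times_{M_d X} X_d,\]
where $M_d X$ is the matching object, a limit over strictly smaller objects that already exists by induction. Reedy fibrancy says $X_d \to M_d X$ is a fibration with fibrant codomain, so axiom (F4) produces the pullback and, applied to a weak equivalence $X \to Y$ between Reedy fibrant diagrams, shows inductively that $\lim_\DD X \to \lim_\DD Y$ is a weak equivalence (the trivial fibration case feeds the general case via Ken Brown's lemma for fibration categories).

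Second, I would produce, for every $X \in \MM^\DD$, a Reedy fibrant replacement $X' \xrightarrow{\sim} X$. This is again inductive: having replaced $X$ on $\DD'$, apply (F5) to the induced map $X_d \to M_d X'$ to factor it as a weak equivalence followed by a fibration, and let $X'_d$ be the intermediate object. Combined with step one, this makes $\holim_\DD X := \lim_\DD X'$ a well-defined functor $\Ho(\MM^\DD) \to \Ho(\MM)$, independent up to weak equivalence of the choice of $X'$ (comparing two replacements via a third, inductively constructed, Reedy fibrant replacement).

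Third, I would establish the adjunction. The constant functor $\const_\DD$ preserves weak equivalences and sends fibrant objects to Reedy fibrant diagrams, so it descends to $\Ho(\MM) \to \Ho(\MM^\DD)$. On the level of $\MM^\DD$ one already has $\MM(c, \lim_\DD X') \cong \MM^\DD(\const_\DD c, X')$; to promote this to a bijection in the homotopy categories I would use the computation of morphisms in $\Ho$ of a fibration category as right homotopy classes with respect to a path object. A path object for $X'$ in $\MM^\DD$ can be built by inductively applying (F5) to the diagonal in the Reedy structure, and applying $\lim_\DD$ to such a path object yields a path object for $\lim_\DD X'$ in $\MM$; this produces the required natural bijection of homotopy classes.

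The main obstacle is controlling the homotopy calculus against the inductive construction: the matching object $M_d X'$ depends on prior stages of the Reedy fibrant replacement, so one must show that different choices of path object, replacement, or intermediate factorization give compatibly equivalent answers. This reduces to showing that any two Reedy fibrant replacements of $X$ admit a common Reedy fibrant replacement sitting over both, which is yet another iterated use of (F4) and (F5) and is the technical heart of the argument.
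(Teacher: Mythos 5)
The paper does not prove this proposition; it cites Cisinski and only indicates, in the sentence after the statement, that $\holim_\DD$ is computed by Reedy fibrant replacement followed by $\lim_\DD$. So there is no in-paper proof to compare against, only the cited source, and I will judge your argument on its own merits.

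Your steps (1) and (2) — that Reedy fibrant diagrams admit honest limits which send (trivial) Reedy fibrations to (trivial) fibrations by the pullback axiom, that weak-equivalence invariance then follows from Ken Brown's lemma, and that Reedy fibrant replacements exist by induction on degree using (F5) — are correct and are indeed the ingredients used to define $\holim_\DD$ on objects. Step (3) is where the proof has a genuine gap, in two places.

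First, a small but symptomatic error: $\const_\DD$ does \emph{not} in general send fibrant objects to Reedy fibrant diagrams. If some matching category $\partial(d/\DD)$ has more than one connected component, the matching map of $\const_\DD c$ at $d$ is a diagonal $c \to c\times\cdots\times c$, which is not a fibration. Descent of $\const_\DD$ to $\Ho$ needs only preservation of weak equivalences, so this does not break that step, but it does mean $\const_\DD$ is not an exact functor of fibration categories, so one cannot transfer the adjunction by a formal Quillen-pair-style argument.

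Second, and more seriously, the description of hom-sets in $\Ho(\MM)$ is wrong as stated. In a fibration category (even one with all objects fibrant), $\Ho(\MM)(c,y)$ is not the set of right-homotopy classes of maps $c\to y$; by Ken Brown's calculus of right fractions it is the filtered colimit
\[\Ho(\MM)(c,y)\;\cong\;\colim_{c'\twoheadrightarrow c}\,\pi(c',y),\]
taken over trivial fibrations $c'\to c$. Your path-object compatibility together with the point-set adjunction then only yields
\[\Ho(\MM)(c,\lim\nolimits_\DD X')\;\cong\;\colim_{c'\twoheadrightarrow c}\,\pi(\const_\DD c',X'),\]
whereas
\[\Ho(\MM^\DD)(\const_\DD c,X')\;\cong\;\colim_{Z\twoheadrightarrow \const_\DD c}\,\pi(Z,X'),\]
the second colimit running over trivial Reedy fibrations $Z\to\const_\DD c$ in $\MM^\DD$. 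To finish one must show these two colimits agree, i.e.\ that the functor $c'\mapsto\const_\DD c'$ between the indexing categories is cofinal: for every trivial Reedy fibration $Z\to\const_\DD c$ there should exist a trivial fibration $c'\to c$ in $\MM$ and a map $\const_\DD c'\to Z$ over $\const_\DD c$. Your sketch never addresses this comparison, and it is precisely where the real work lies; it is the content of Cisinski's general criterion for deriving adjunctions between prefibration categories, not a consequence of choosing compatible path objects.
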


This adjoint is constructed as follows: Given a functor $F\colon \DD \to \MM$, take a Reedy fibrant replacement $F\to F'$. The limit of $F'$ exists and we have $\holim_\DD F \cong \lim_\DD F'$. 

Given functors $F\colon \DD \to \MM$ and $i\colon \Ac \to \DD$ (with $\Ac$ finite inverse), we have an induced map 
\[u_{\Ac}^\DD\colon \holim_\DD F \to \holim_{\Ac} i^*F,\]
 adjoint to the map 
 \[\const_{\Ac}\holim_\DD F \to i^*F\]
  that is given by applying $i^*$ to the counit $\const_\DD \holim_\DD F \to F$. The map $u_{\Ac}^\DD$ will also be called the \emph{canonical map}. This is a functorial construction in the sense that $u_{\Ac}^\DD =  u_{\Ac}^\BB\circ u_\BB^\DD$ for a diagram of finite inverse categories $\Ac\to \BB\to \DD$, as can be shown by standard properties of adjoints. 

Next, we want to prove three properties of the homotopy limit construction in fibration categories. We will reduce these statements to already known results in the world of model categories via a Yoneda-type construction. 

Following Cisinski, we have the following proposition:
\begin{prop}\label{PropYoneda}
Let $(\MM,\we \MM,\FF)$ be a (small) fibration category.\footnote{The smallness hypothesis can be ensured for our purposes either by the use of universes or by the following observation: If $F\colon \CC \to \MM$ is a functor from a small category $\CC$, then $F$ factors over a small sub fibration category $\MM'\subset \MM$; the homotopy limit of $F$, if $\CC$ is finite inverse, can then be computed in $\MM'$.} Then there exists a functor $h\colon \MM \to \PP_w(\MM)$ into a model category with functorial factorizations, which has the following properties:
\begin{enumerate}
\item $h$ preserves and reflects weak equivalences;
\item if all objects of $\MM$ are fibrant, then $h$ preserves homotopy limits along arbitrary finite inverse categories. 
\end{enumerate}
\end{prop}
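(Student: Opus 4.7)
The plan is to take $\PP_w(\MM)$ to be a left Bousfield localization of the projective model structure on simplicial presheaves $\sSet^{\MM^{\mathrm{op}}}$, specifically at the set of maps of representables $\mathrm{Map}_\MM(-,x) \to \mathrm{Map}_\MM(-,y)$ induced by the morphisms of $\we\MM$. Since $\MM$ is small, this localization exists and produces a combinatorial simplicial model category; in particular it admits functorial factorizations. Let $h\colon \MM \to \PP_w(\MM)$ be the (simplicially enriched) Yoneda embedding $x \mapsto \mathrm{Map}_\MM(-,x)$.

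Property (1) then splits into two halves. Preservation of weak equivalences holds by construction of the localizing set. For reflection, I would argue via a homotopical Yoneda lemma: the derived mapping space $\mathrm{Map}^h_{\PP_w(\MM)}(h(x),h(y))$ is computed by the hammock (or any equivalent) mapping space of $\MM$ between $x$ and $y$, so if $h(f)$ is an equivalence in $\PP_w(\MM)$ then $f$ induces bijections on $\Ho(\MM)(z,-)$ for every object $z$, and is therefore a weak equivalence by the usual Yoneda argument in $\Ho(\MM)$. This identification of mapping spaces is a standard consequence of the shape of the localizing set and is already in Cisinski's treatment.

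For property (2), assume every object of $\MM$ is fibrant and let $F\colon \DD \to \MM$ be a diagram over a finite inverse $\DD$. Choose a Reedy fibrant replacement $F \to F'$ in $\MM^\DD$, so that $\holim_\DD F \cong \lim_\DD F'$. The Yoneda embedding preserves all small limits, hence $h(\lim_\DD F') \cong \lim_\DD h(F')$ in $\PP_w(\MM)$. The crux, and the main obstacle, is then to show that $h\circ F'$ is itself Reedy fibrant in $\PP_w(\MM)^\DD$, so that its ordinary limit already computes its homotopy limit. Via the matching-object description of Reedy fibrancy along a finite inverse diagram, this reduces to verifying that $h$ sends fibrations between fibrant objects of $\MM$ to fibrations between fibrant objects of $\PP_w(\MM)$. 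Each representable $h(x)$ is projectively fibrant, and the key point is that $h(x)$ is local with respect to the localizing set, equivalently, that it sees the weak equivalences of $\MM$ as equivalences on mapping spaces; this is essentially a reformulation of axioms (F4) and (F5) and is where Cisinski's analysis enters. Granted locality of the $h(x)$, preservation of fibrations is inherited from the projective model structure on $\sSet^{\MM^{\mathrm{op}}}$, and we conclude $h(\holim_\DD F) \simeq \holim_\DD h(F)$.
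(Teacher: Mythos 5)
Your setup (Bousfield localization of the projective model structure on simplicial presheaves at the representable images of weak equivalences, with $h$ the Yoneda embedding) matches the paper exactly, and your argument for reflection of weak equivalences in part (1) is sound — you invoke a derived-mapping-space version of the Yoneda lemma, whereas the paper argues more elementarily with the discrete presheaf $ho_Z(Y) = \Ho(\MM)(Y,Z)$, which is visibly $h(\we\MM)$-local and projectively fibrant, so no comparison of simplicial mapping spaces is needed; either route is fine.

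Part (2), however, contains a genuine error. You claim that ``the key point is that $h(x)$ is local with respect to the localizing set'' and that this ``is essentially a reformulation of axioms (F4) and (F5).'' That is false. For $h(x) = \Hom_\MM(-,x)$, which is a discrete (hence projectively fibrant and cofibrant) presheaf, $h(\we\MM)$-locality would mean precisely that for every weak equivalence $Y \to Z$ in $\MM$ the induced map $\Hom_\MM(Z,x) \to \Hom_\MM(Y,x)$ is a \emph{bijection}. That fails already in the simplest examples (take $\MM$ to be Kan complexes, $Y=\ast \to Z = \Delta[1]$, $x$ anything non-discrete); the Yoneda embedding of a fibration category never lands in the local objects except trivially. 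As the paper itself points out, the local objects are the presheaves inverting weak equivalences, and the only presheaves identified there as local are the $ho_Z$, not the $h(x)$. Consequently, your inference ``granted locality of the $h(x)$, preservation of fibrations is inherited from the projective model structure'' starts from a false premise and collapses. The fact that $h$ carries fibrations (and acyclic fibrations) of a Brown category of fibrant objects to fibrations of $\PP_w(\MM)$ is a nontrivial theorem — Cisinski's Corollaire 3.12 in \cite{Cis10b} — and it does not reduce to fibrancy of the representables in the localized structure; note that fibrations in a left Bousfield localization need not run between fibrant objects. The paper simply cites Cisinski's result as a black box after first invoking \cite[Proposition 2.1.2]{RadCofibrations} to upgrade $\MM$ (all objects fibrant) to a category of fibrant objects in the sense of Brown, a step you also omit. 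You should do the same: quote that $h$ preserves fibrations, acyclic fibrations, and limits, and then the Reedy-fibrant-replacement argument you outline closes cleanly.
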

\begin{proof}
We will construct $h$ via a Yoneda-type embedding, following \cite[Section 3]{Cis10b}. Let $\PP(\MM)$ be the category of simplicial presheaves on $\MM$ with the projective model structure. Consider the Yoneda
 embedding $h\colon \MM \to \PP(\MM)$ and define $\PP_w(\MM)$ to be the Bousfield localization of $\PP(\MM)$
 at $h(\we \MM)$. Note that 
 $\PP_w(\MM) = \PP(\MM)$
  as categories, but the model structures are different.

Clearly $h\colon \MM \to \PP_w(\MM)$ preserves weak equivalences. We want to show that it also detects weak equivalences.  
Cisinski observes that the $h(\we \MM)$-local objects in $\PP(\MM)$ are exactly those presheaves $\FF$ such that $\FF(Y) \to \FF(X)$ is a weak equivalence if $X\to Y$ is a weak equivalence in $\MM$. For example, the discrete presheaf $ho_X$, defined by $ho_X(Y) = \Ho(\MM)(Y,X)$, is $h(\we \MM)$-local for every $X\in \MM$. Assume now that for a morphism $f\colon X\to Y$ in $\MM$ the morphism $h(f)\colon h(X)\to h(Y)$ is a weak equivalence in $\PP_w(\MM)$. Observe that $h(X)$ and $h(Y)$ are projectively cofibrant and $ho_Z$ is projectively fibrant for every $Z\in\MM$. Thus,
\[ho(\MM)(Y,Z) \cong \map(h(Y), ho_Z) \to ho(\MM)(X,Z) \cong \map(h(X),ho_Z) \]
is a weak equivalence and hence an isomorphism for every $Z\in\MM$. Thus, $X\to Y$ induces an isomorphism in $\Ho(\MM)$ and is thus a weak equivalence by \cite[Theorem 7.2.7]{RadCofibrations}. 

Assume now that all objects of $\MM$ are fibrant. Then by \cite[Proposition 2.1.2]{RadCofibrations}, $\MM$ is a category of fibrant objects in the sense of Brown. By \cite[Corollaire 3.12]{Cis10b}, $h$ preserves fibrations and acyclic fibrations; furthermore, it preserves all limits. Clearly, $h$ preserves thus Reedy fibrant diagrams and hence preserves all homotopy limits along finite inverse categories. 
\end{proof}

\begin{prop}\label{ContractibleHolim}Let $\MM$ be a fibration category and $\DD$ be a finite inverse category whose nerve is contractible. Let $F\colon \DD \to \we \MM$ be a diagram. 
Then the morphism
\[u_d^{\DD}\colon \holim_{\DD}F \to F(d)\]
is an isomorphism in $\Ho(\MM)$ for every $d\in\DD$.
\end{prop}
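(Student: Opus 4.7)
The plan is to use the Yoneda-type embedding from Proposition \ref{PropYoneda} to reduce the claim to the corresponding well-known statement in a model category.

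First, I reduce to the case where every object of $\MM$ is fibrant. Choose a Reedy fibrant replacement $F \to F'$ in the Reedy fibration category structure on $\MM^\DD$ from \cite{Cis10}; then $\holim_\DD F$ is by construction $\lim_\DD F'$, and the canonical map $u_d^\DD$ for $F$ corresponds under the weak equivalence $F(d) \to F'(d)$ to the canonical map $\lim_\DD F' \to F'(d)$. Since $\DD$ is finite inverse, every value of the Reedy fibrant diagram $F'$ is fibrant (by induction on degree, the matching object at a minimal object is terminal), and by axiom (F3), $F'$ still factors through $\we\MM$. So it suffices to prove the claim for $F'$. Passing to the full sub fibration category of $\MM$ on the fibrant objects, and then, using the footnote to Proposition \ref{PropYoneda}, to a small sub fibration category containing the image of $F'$ and closed under the operations used to form $\lim_\DD F'$, we may assume $\MM$ is small with all objects fibrant.

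Next, I apply the functor $h \colon \MM \to \PP_w(\MM)$ from Proposition \ref{PropYoneda}. Part (2) ensures that $h$ preserves $\holim_\DD F'$, and by naturality $h$ sends $u_d^\DD$ to the canonical map $\holim_\DD (hF') \to (hF')(d)$ in the model category $\PP_w(\MM)$. Since $h$ preserves weak equivalences, $hF'$ still takes values in weak equivalences, and since $h$ reflects weak equivalences (part (1)), the problem reduces to showing that in any model category $\CC$, for a finite inverse category $\DD$ with contractible nerve and a diagram $G \colon \DD \to \CC$ whose morphisms are weak equivalences, the canonical map $\holim_\DD G \to G(d)$ is a weak equivalence for every $d \in \DD$.

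This final statement is a standard fact about homotopy limits in model categories, derivable from the homotopy invariance of $\holim_\DD$ together with the identification $\holim_\DD \const_\DD X \simeq X^{|N\DD|} \simeq X$ when $|N\DD|$ is contractible. I expect the principal obstacle to lie in the first step---in particular, ensuring that the successive passage to Reedy fibrant replacements, the fibrant-objects subcategory, and a small sub fibration category does not change the canonical map $u_d^\DD$ up to isomorphism in $\Ho(\MM)$---since once we are in the setting where all objects are fibrant, Proposition \ref{PropYoneda} makes the rest of the argument essentially formal.
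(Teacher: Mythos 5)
Your approach matches the paper's: reduce to the fibrant-objects case via Reedy fibrant replacement, then apply the Yoneda embedding from Proposition \ref{PropYoneda} to transfer the claim to a model category, where it is a known result (which the paper cites from \cite[Corollary 29.2, Section 31]{C-S02}). The only caveat is that your parenthetical derivation of that final model-categorical fact is too quick---a diagram of weak equivalences over $\DD$ need not admit a levelwise weak equivalence to or from a constant diagram, so homotopy invariance plus $\holim_\DD \const_\DD X \simeq X^{|N\DD|}$ does not immediately yield the claim---and it is cleaner to cite the result directly, as the paper does.
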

\begin{proof}
By a (Reedy) fibrant replacement, we can replace $F$ by a diagram in the subcategory of fibrant objects $\MM_{fib}$ with the same homotopy limit (computed in $\MM_{fib})$. Thus, we can assume that every object of $\MM$ is fibrant. The result follows now from the corresponding result for model categories \cite[Corollary 29.2, Section 31]{C-S02} and Proposition \ref{PropYoneda}.
\end{proof}

\begin{defi}
A functor $i\colon \Ac \to \BB$ between two categories is called \emph{homotopically initial} if $\mathrm{Nerve}\, (i/b)$ is (weakly) contractible for every $b\in B$, where $i/b$ denotes the comma category. 
\end{defi}
\begin{prop}\label{FinalHolim}
Let $\MM$ be a fibration category and $\DD$ be a finite inverse category. Let $i\colon \Ac \to \DD$ be a homotopically initial functor and $F\colon \DD \to \MM$ a diagram. Then the canonical map
\[\holim_\DD F \to \holim_\Ac (Fi)\]
is an isomorphism in $\Ho(\MM)$.
\end{prop}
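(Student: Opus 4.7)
The plan is to reduce to the cofinality statement for homotopy limits in model categories via the Yoneda-type embedding of Proposition \ref{PropYoneda}, following the exact template of the proof of Proposition \ref{ContractibleHolim}.

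First I would reduce to the case where every object of $\MM$ is fibrant. A Reedy fibrant replacement $F \to F'$ in $\MM^\DD$ is an objectwise weak equivalence whose target factors through the full subcategory $\MM_{\mathrm{fib}}$ of fibrant objects. The induced maps $\holim_\DD F \to \holim_\DD F'$ and $\holim_\Ac Fi \to \holim_\Ac F'i$ are isomorphisms in $\Ho(\MM)$, the homotopy limits computed in $\MM_{\mathrm{fib}}$ agree with those computed in $\MM$, and $\MM_{\mathrm{fib}}$ is itself a fibration category in which every object is fibrant. So it suffices to treat the diagram $F'$ in $\MM_{\mathrm{fib}}$, which we rename $F$.

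Next I would apply the Yoneda-type embedding $h\colon \MM \to \PP_w(\MM)$ of Proposition \ref{PropYoneda}. By part (2) of that proposition, $h$ preserves homotopy limits along finite inverse categories, so after applying $h$ the canonical map $\holim_\DD F \to \holim_\Ac Fi$ is identified, in $\Ho(\PP_w(\MM))$, with the canonical map $\holim_\DD(hF) \to \holim_\Ac(hFi)$ in the model category $\PP_w(\MM)$. The cofinality theorem for homotopy limits in model categories asserts that this latter map is a weak equivalence whenever $i$ is homotopically initial; this is classical and a reference is \cite[Section 31]{C-S02}. Since $h$ also reflects weak equivalences, the original canonical map is a weak equivalence in $\MM$, hence an isomorphism in $\Ho(\MM)$.

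The only real obstacle is the model-category cofinality theorem itself, but since it can be cited directly, the proof amounts to a routine transfer of structure via Proposition \ref{PropYoneda}, with no new ingredients beyond those already deployed in Proposition \ref{ContractibleHolim}. A minor subtlety worth flagging is that the cited cofinality result is typically formulated for diagrams in a model category that need not be pointwise cofibrant or fibrant; since $\PP_w(\MM)$ has functorial factorizations this presents no difficulty, but one should be mindful of the fibrancy hypotheses needed to apply part (2) of Proposition \ref{PropYoneda} in the first step above.
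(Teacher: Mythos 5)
Your proof is correct and follows exactly the same route as the paper's: reduce to the case of fibrant objects via Reedy fibrant replacement, transfer through the Yoneda-type embedding $h$ of Proposition \ref{PropYoneda}, invoke the cofinality theorem for homotopy limits in model categories from \cite[31.6]{C-S02}, and use that $h$ reflects weak equivalences. The paper compresses all of this into two sentences; you have merely spelled out the same argument.
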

\begin{proof}
 As before, we can assume that $\MM$ has only fibrant objects. The result follows now from the corresponding result for model categories \cite[31.6]{C-S02} and Proposition \ref{PropYoneda}.
\end{proof}

For the following proposition recall that a full subcategory $\Ac\subset \DD$ is called a \emph{cosieve} if for every $a\in \Ac$ and every morphism $a\to d$ in $\DD$, we already have $d\in \Ac$. 
\begin{prop}\label{DecompositionHolim}
Let $\MM$ be a fibration category and $\DD$ be a finite inverse category. Let $\Ac,\BB\subset \DD$ be inclusions of cosieves. Let $F\colon \DD \to \MM$ be a diagram. Then there is an isomorphism
\[\holim_\DD F \to \holim_\Ac F \times^h_{\holim_{\Ac\cap \BB}F} \holim_\BB F\]
in $\Ho(\MM)$, compatible with the canonical maps to $\holim_\Ac F$ and $\holim_\BB F$. 
\end{prop}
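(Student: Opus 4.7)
The approach I would take mirrors the proofs of Propositions \ref{ContractibleHolim} and \ref{FinalHolim}: first replace $F$ with a Reedy fibrant replacement $F'\colon \DD \to \MM$, then invoke the Yoneda-type embedding of Proposition \ref{PropYoneda} to transport the statement into a genuine model category, where it reduces to a standard cosieve decomposition. A key preliminary point is that since $\Ac$, $\BB$, and $\Ac \cap \BB$ are cosieves in $\DD$, any non-identity morphism out of an object $d$ of one of these subcategories again lies in it, so matching objects are unchanged by restriction. Consequently $F'|_\Ac$, $F'|_\BB$, and $F'|_{\Ac \cap \BB}$ are all Reedy fibrant, and the four homotopy limits appearing in the statement are computed as the ordinary limits of $F'$. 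After restricting to the subcategory of fibrant objects and applying Proposition \ref{PropYoneda}, it suffices to prove the analogous statement inside the model category $\PP_w(\MM)$.

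Working in the model category, and assuming (as the statement implicitly requires) that $\DD = \Ac \cup \BB$, the cosieve property forces every morphism of $\DD$ to lie entirely in $\Ac$ or entirely in $\BB$: if $d_1 \to d_2$ has $d_1 \in \Ac$, then $d_2 \in \Ac$ as well. This identifies $\DD$ with the pushout $\Ac \sqcup_{\Ac \cap \BB} \BB$ in $\Cat$, and so the natural map
\[\lim_\DD F' \longrightarrow \lim_\Ac F' \times_{\lim_{\Ac \cap \BB} F'} \lim_\BB F'\]
is an isomorphism of ordinary limits. To recognize this as the homotopy pullback, I need one of the two legs, say $\lim_\Ac F' \to \lim_{\Ac \cap \BB} F'$, to be a fibration.

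For this last point I would argue inductively. Enumerate the objects of $\Ac \setminus (\Ac \cap \BB)$ as $d_1, \dots, d_k$ in order of increasing degree, and set $\Ac_0 = \Ac \cap \BB$, $\Ac_i = \Ac_{i-1} \cup \{d_i\}$. Each $\Ac_i$ is a cosieve in $\Ac$: a non-identity morphism out of $d_i$ in $\Ac$ lands in an object of strictly smaller degree, which must lie either in $\Ac \cap \BB$ or among $d_1, \dots, d_{i-1}$, hence in $\Ac_{i-1}$. A parallel analysis shows that the only morphisms into $d_i$ within $\Ac_i$ are identities, so the matching object $M_{d_i} F'$ depends only on $F'|_{\Ac_{i-1}}$ and the universal property of the limit yields a pullback square
\[\lim_{\Ac_i} F' \cong \lim_{\Ac_{i-1}} F' \times_{M_{d_i} F'} F'(d_i).\]
Since $F'(d_i) \to M_{d_i} F'$ is a Reedy fibration, the induced map $\lim_{\Ac_i} F' \to \lim_{\Ac_{i-1}} F'$ is a pullback of a fibration and hence itself a fibration. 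Composing these over $i$ shows $\lim_\Ac F' \to \lim_{\Ac \cap \BB} F'$ is a fibration, so the ordinary pullback does compute the homotopy pullback, and compatibility with the canonical maps is immediate from the construction. The hardest part of the proof is exactly this inductive fibration verification; once it is in place, the remaining steps are formal consequences of the Yoneda reduction.
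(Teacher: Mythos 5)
Your proof is correct, but it takes a genuinely different route from the paper's. The paper's proof reduces to a model category via Yoneda (as you do) and then invokes Chacho\l{}ski--Scherer \cite[31.5]{C-S02}: there is a co-Grothendieck construction $\EE$ with a functor $G\colon \EE \to \DD$, Chacho\l{}ski--Scherer give the equivalence $\holim_\EE F \simeq \holim_\Ac F \times^h_{\holim_{\Ac\cap\BB}F}\holim_\BB F$, and the paper then checks that $G$ is homotopically initial (each $G/d$ has a terminal object) so that $\holim_\DD F \simeq \holim_\EE F$ by Proposition~\ref{FinalHolim}. You instead argue hands-on: cosieve inclusions leave matching objects unchanged so Reedy fibrancy restricts; the strict limit over $\DD$ is literally the iterated pullback because every morphism lies entirely in $\Ac$ or entirely in $\BB$; and a degree-filtration of $\Ac$ starting from $\Ac\cap\BB$ shows that $\lim_\Ac F' \to \lim_{\Ac\cap\BB}F'$ is a fibration, so the strict pullback already computes the homotopy pullback. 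Your approach is more self-contained --- it avoids the external citation entirely --- and in fact, once you notice that one leg is a fibration between fibrant objects, the entire argument could be carried out inside the fibration category without passing through $\PP_w(\MM)$ at all, using only that limits of Reedy fibrant diagrams exist and that pullbacks of fibrations between fibrant objects are fibrations. The paper's route is less elementary but uniform with its treatment of Propositions~\ref{ContractibleHolim} and~\ref{FinalHolim}, which also lean on \cite{C-S02}. You are also right to flag the implicit hypothesis $\DD = \Ac\cup\BB$: the paper's proof also tacitly uses it (the homotopy-initiality check only treats $d\in\Ac\cup\BB$), and the statement is false without it.
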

\begin{proof}
As before, we can assume that $\MM$ has only fibrant objects. The result follows now from the corresponding result for model categories \cite{C-S02}[31.5] and Proposition \ref{PropYoneda} as follows: Chacholski and Scherer prove that
\[\holim_{\EE} F \to \holim_\Ac F \times^h_{\holim_{\Ac\cap \BB}F} \holim_\BB F\]
is an equivalence, where $\EE$ is a co-Grothendieck construction, which is in our case given as follows: It has objects
\begin{itemize}
\item $(a,0)$ for $a\in\Ac$,
\item $(b,1)$ for $b\in\BB$, and
\item $(c,01)$ for $c \in \Ac\cap \BB$.
\end{itemize}
The morphisms $(d,i) \to (d', i')$ are morphisms $d\to d'$ in $\DD$ if $i= i'$ or $i=0$ or $1$ and $i' = 01$. 

We will show that the functor
\[G\colon \EE \to \DD,\qquad (d,i) \mapsto d\]
is homotopically initial. The category $G/d$ has 
\[\xymatrix{
(d,i)\ar@{|->}[d] \\
d \ar[r]^= & d 
}\]
as terminal object with
\begin{itemize}
\item $i=0$ if $d\in \Ac$, but $d\notin \BB$,
\item $i=1$ if $d\in \BB$, but $d\notin \Ac$, and
\item $i=01$ if $d\in \Ac\cap \BB$.
\end{itemize} 
In the first two cases, we use that $\Ac\subset \DD$ and $\BB\subset \DD$ are cosieves. 

Thus,
\[u_{\EE}^{\DD}\colon \holim_\DD F \to \holim_\EE F\]
 is an equivalence by Proposition \ref{FinalHolim} and the result follows.
\end{proof}

\begin{remark}
Instead of using \cite{C-S02}, we could also have used the language of quasi-categories. The model category $\PP_w(\MM)$ is actually simplicial by \cite[Theorem 4.46]{Bar10}. By \cite[Theorem 4.2.4.1]{Lur09}, homotopy limits in $\PP_w(\MM)$ and in the coherent nerve $N_c\PP_w(\MM)^\circ$ agree. Thus, the last three propositions follow from the corresponding results in quasi-categories: \cite[Corollary 4.4.4.10]{Lur09}, \cite[Theorem 4.1.3.1 and Proposition 4.1.1.8]{Lur09} and \cite[Corollary 4.2.3.10]{Lur09}.
\end{remark}

\section{Model Categories are Fibrant}\label{MCFSection}
Our main goal in this section is to prove that $N_\xi\MM$ is Reedy fibrant if $\MM$ is a fibration category. This will imply then that every fibration category is fibrant as a relative category in the Barwick--Kan model structure. 

First, we have to introduce the following notation: For a category $\DD$, let $\KK(\DD)$ be the category $\DD \times (0\to 1) \cup_{\DD \times 1} \DD^{\vartriangleleft}$, where $\DD^{\vartriangleleft}$ denotes the category $\DD$ with an additional initial object. Thus, $\KK(\DD)$ consists of two copies of $\DD$, where there is a unique map from the $0$-copy of each object to the $1$-copy of it, and each object in the $1$-copy receives an additional morphism from a ``partial initial object''. We will view $\DD$ as a subcategory of $\KK(\DD)$ via the identification $\DD \cong \DD \times 0$. We will furthermore denote the ``partial initial object'' by $k_\DD \in \KK(\DD)$. 

In \cite[Lemma 4.2]{M-O14}, we showed the following fibrancy criterion for the Thomason model structure:
\begin{prop}\label{OldFibrancy}
  A category $\CC$ is fibrant in the Thomason model structure if and only if it has the right lifting property with respect to all inclusions $c\Sd^2 \Lambda^n[n] \to \KK(c\Sd^2 \Lambda^n[n])$. 
\end{prop}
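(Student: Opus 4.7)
The plan is to translate Thomason fibrancy into a lifting problem in $\Cat$ and then compare two classes of test maps. By definition $\CC$ is Thomason fibrant iff $\Ex^2\mathrm{Nerve}(\CC)$ is a Kan complex, and by the adjunction $c\Sd^2 \dashv \Ex^2\mathrm{Nerve}$ this is equivalent to $\CC$ having the right lifting property against every $c\Sd^2\Lambda^n[k]\hookrightarrow c\Sd^2\Delta[n]$ for $0\le k\le n$. My task is to show that this lifting property is equivalent to the smaller, one-map-per-$n$ family $c\Sd^2\Lambda^n[n]\hookrightarrow \KK(c\Sd^2\Lambda^n[n])$.

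For the easy direction, I would verify that each inclusion $c\Sd^2\Lambda^n[n]\hookrightarrow \KK(c\Sd^2\Lambda^n[n])$ is a Thomason trivial cofibration, so that fibrancy of $\CC$ supplies the desired lift. The weak-equivalence claim is clean: for any $\DD$ the projection $\KK(\DD)\to \DD^\vartriangleleft$ collapsing $\DD\times(0\to 1)$ onto $\DD\times 1$ admits the inclusion $d\mapsto (d,1)$ as a right adjoint, hence induces a homotopy equivalence on nerves; since $\DD^\vartriangleleft$ has an initial object, its nerve (and hence that of $\KK(\DD)$) is contractible. Applied to $\DD=c\Sd^2\Lambda^n[n]$, whose nerve is equivalent to the contractible horn $\Lambda^n[n]$, this shows the inclusion is a Thomason weak equivalence. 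The cofibration property I would verify by exhibiting the inclusion as a relative Thomason cell complex built from pushouts of standard generating cofibrations of the form $c\Sd^2(\partial\Delta[m]\hookrightarrow\Delta[m])$.

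The harder direction, and the main content, is sufficiency. The strategy is to express each horn inclusion $c\Sd^2\Lambda^n[k]\hookrightarrow c\Sd^2\Delta[n]$ as a finite composition of pushouts of maps $c\Sd^2\Lambda^{m}[m]\hookrightarrow \KK(c\Sd^2\Lambda^{m}[m])$ for various $m$, which will allow the horn-filling lift to be built step by step from the hypothesis. Recall from Example \ref{undescription} that objects of $c\Sd^2\Delta[n]$ are strictly ascending chains of nonempty subsets of $\{0,\ldots,n\}$; the new objects relative to $c\Sd^2\Lambda^n[k]$ are precisely the chains containing one of the ``missing'' subsets $\{0,\ldots,\widehat{k},\ldots,n\}$ or $\{0,\ldots,n\}$. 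The main obstacle is organizing these new chains into a filtration in which each attached subposet is isomorphic to $\KK(c\Sd^2\Lambda^{m}[m])$ for some $m$, with the partial initial object $k_\DD$ realized as the distinguished top chain of that layer. For inner or left-outer horns ($k\neq n$), I expect a supplementary reduction to the top-horn case via contractibility of certain link subposets, similar in spirit to the arguments promised in Section \ref{CLSection}. Bookkeeping this cellular decomposition is where I anticipate the main technical difficulty.
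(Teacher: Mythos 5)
Your outline---reduce Thomason fibrancy to the right lifting property against all $c\Sd^2(\Lambda^k[n]\hookrightarrow\Delta[n])$ and then compare this family with the one-map-per-$n$ family---is the right shape, and your weak-equivalence computation in the easy direction is correct: the adjunction $p\dashv i$ between $\KK(\DD)$ and $\DD^{\vartriangleleft}$, contractibility of the cone, and contractibility of the double-subdivided horn. But both directions of your plan are missing the one observation that makes the statement essentially formal, and that \cite{M-O14} uses: $\KK(c\Sd^2\Lambda^k[n])$ can be identified with the full subposet $\mathcal{P}_k\subset c\Sd^2\Delta[n]$ of chains that do not contain $\underline{n}\setminus\{k\}$, and this subposet admits an order-preserving retraction $r\colon c\Sd^2\Delta[n]\to\mathcal{P}_k$ fixing $\mathcal{P}_k$ pointwise (its formula appears in the proof of Proposition~\ref{FibrancyCriteriumProp}). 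With this in hand, the ``only if'' direction is immediate without any cofibration claim: given $F\colon c\Sd^2\Lambda^n[n]\to\CC$ and $\CC$ fibrant, extend $F$ over $c\Sd^2\Delta[n]$ and restrict to $\mathcal{P}_n\cong\KK(c\Sd^2\Lambda^n[n])$. And the ``if'' direction is just as short: extend $F$ to $\tilde F\colon\mathcal{P}_k\to\CC$ by hypothesis, and $\tilde F\circ r$ extends $F$ over $c\Sd^2\Delta[n]$. Since all $c\Sd^2\Lambda^k[n]$ for fixed $n$ are isomorphic abstract categories via the symmetry of $\Delta[n]$ permuting vertices, the top horn $k=n$ already encodes every horn; no contractibility-of-links argument is needed for that reduction.

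By contrast, the two places where you defer work are exactly where your route becomes substantially harder than the paper's, and neither step is established. For the easy direction, the claim that $c\Sd^2\Lambda^n[n]\hookrightarrow\KK(c\Sd^2\Lambda^n[n])$ is a Thomason \emph{cofibration} by assembling it from pushouts of $c\Sd^2(\partial\Delta[m]\hookrightarrow\Delta[m])$ is not at all obvious and is not needed once one uses the restriction argument above (if one insists, the cleanest route to cofibrancy is again via $r$: the inclusion is a retract of the generating trivial cofibration $c\Sd^2(\Lambda^n[n]\hookrightarrow\Delta[n])$). For the hard direction, the proposed cellular decomposition of $c\Sd^2\Lambda^k[n]\hookrightarrow c\Sd^2\Delta[n]$ into pushouts of $c\Sd^2\Lambda^m[m]\hookrightarrow\KK(c\Sd^2\Lambda^m[m])$ would require a delicate filtration of the chains through $\underline{n}\setminus\{k\}$ and $\underline{n}$, with each layer isomorphic to a $\KK$-inclusion attached along a well-behaved (cosieve) map, so that the pushouts in $\Cat$ stay under control; this bookkeeping is nontrivial and not carried out. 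The retraction $r$ is the missing key idea that replaces all of this with two lines of elementary poset manipulation.
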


Our first aim is to show that the category of weak equivalences of a fibration category is fibrant in the Thomason model structure. The following proposition will be key:

\begin{prop}\label{ExtensionProp}Let $\DD$ be an arbitrary finite inverse category and $F\colon \DD \to \MM$ be a functor for $\MM$ a fibration category. Then one can extend $F$ to a functor $G\colon \KK(\DD) \to \MM$ such that $G((d,0) \to (d,1))$ is a weak equivalence for every $d\in\DD$ and $G|_{\DD^{\vartriangleleft}}$ is a homotopy limit diagram.\end{prop}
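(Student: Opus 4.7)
The plan is to build $G$ from a Reedy fibrant replacement of $F$ in the Reedy fibration category $\MM^\DD$ guaranteed by \cite{Cis10}. Concretely, I would first apply the factorization axiom pointwise along the skeletal filtration of the inverse category $\DD$ to produce a natural weak equivalence $\alpha\colon F \to F'$ in which $F'$ is Reedy fibrant and in particular takes values in fibrant objects. Since $F'$ is Reedy fibrant and $\DD$ is finite inverse, the limit $\lim_\DD F'$ exists and, by the construction of $\holim$ recalled after Cisinski's theorem, represents $\holim_\DD F$; moreover the universal cone $\lim_\DD F' \to F'(d)$ is a model for the canonical maps $u_d^\DD$.

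Now define $G\colon \KK(\DD) \to \MM$ as follows. On $\DD \times \{0\}$ set $G = F$, on $\DD \times \{1\}$ set $G = F'$, and declare $G(k_\DD) = \lim_\DD F'$. For morphisms, we use $F$ inside $\DD \times \{0\}$, $F'$ inside $\DD \times \{1\}$, the natural transformation $\alpha$ for each generating map $(d,0)\to(d,1)$, and the projections from the limit for the maps $k_\DD \to (d,1)$. Naturality of $\alpha$ ensures that the two ways of sending a morphism $(d,0)\to(d',1)$ in $\DD \times (0\to 1)$ into $\MM$ — either $F(d) \to F(d') \xrightarrow{\alpha_{d'}} F'(d')$ or $F(d) \xrightarrow{\alpha_d} F'(d) \to F'(d')$ — coincide, so $G$ is a well-defined functor. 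By construction the maps $G((d,0) \to (d,1)) = \alpha_d$ are weak equivalences, and $G|_{\DD^\vartriangleleft}$ is precisely the cone over $F'$ induced by $\lim_\DD F'$, hence a homotopy limit diagram in the sense used in Section \ref{FibSection}.

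The only real content beyond bookkeeping is the existence of the Reedy fibrant replacement $\alpha$ and the fact that, once one has it, $\lim_\DD F'$ models the homotopy limit: both are available from Cisinski's theorem and the discussion following it. The mild subtlety to watch is that the factorization axiom (F5) demands a fibrant codomain, so the Reedy fibrant replacement has to be constructed inductively along the degree filtration of the inverse category $\DD$, replacing $F(d)$ by a fibration over the matching object $M_d F'$ using (F4) and (F5) — this is exactly the standard argument giving the Reedy fibration category structure on $\MM^\DD$, and no extra work is needed here.
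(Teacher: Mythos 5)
Your proposal is correct and follows essentially the same approach as the paper: take a Reedy fibrant replacement $F \to F'$ (whose existence comes from the Reedy fibration category structure on $\MM^\DD$), form the limit cone over $F'$, and glue it with the natural transformation $F \to F'$ to get $G$ on $\KK(\DD)$. You spell out the functoriality check and the inductive construction of the Reedy replacement more explicitly than the paper does, but the argument is the same.
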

\begin{proof}
We can find a weak equivalence $F\to F'$ to a Reedy fibrant diagram, corresponding to a functor $\nu\colon D\times \underline{1} \to \MM$. As dicussed in the previous section, limits of Reedy fibrant diagrams exist and are homotopy limits. Let $\widetilde{F'}\colon \DD^{\vartriangleleft} \to \MM$ be a limit cone for $F'$. Then we can glue $G$ from $\nu$ and $\widetilde{F'}$. 
\end{proof}

The following corollary also follows from our later results, but we prefer to give a direct proof. 
\begin{cor}
 The category of weak equivalences of a fibration category is fibrant in the Thomason model structure.
\end{cor}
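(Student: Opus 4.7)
The plan is to apply the fibrancy criterion of Proposition \ref{OldFibrancy}: it suffices to show that every relative functor
\[
F\colon \DD \to \we \MM, \qquad \DD := c\Sd^2\Lambda^n[n],
\]
extends along $\DD \hookrightarrow \KK(\DD)$ to a relative functor $G\colon \KK(\DD) \to \we\MM$. Since $\DD$ is a finite poset, it is in particular a finite inverse category, so Proposition \ref{ExtensionProp} (applied to $F$ viewed as a functor into $\MM$) yields an extension $G\colon \KK(\DD) \to \MM$ for which each morphism $(d,0) \to (d,1)$ is a weak equivalence and $G|_{\DD^{\vartriangleleft}}$ is a homotopy limit diagram. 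The only thing left to check is that $G$ sends every morphism of $\KK(\DD)$ into $\we\MM$.

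The morphisms of the $0$-copy $\DD \times \{0\}$ are sent to weak equivalences by the hypothesis on $F$; the morphisms $(d,0) \to (d,1)$ are weak equivalences by construction. For an arbitrary morphism $d \to d'$ in $\DD$, the induced square
\[
\xymatrix{
G((d,0)) \ar[r]\ar[d] & G((d',0)) \ar[d] \\
G((d,1)) \ar[r] & G((d',1))
}
\]
has weak equivalences on the top edge and on both vertical sides, so by 2-out-of-3 (axiom (F3)) its bottom edge is also a weak equivalence. Hence $G|_{\DD\times\{1\}}$ factors through $\we\MM$.

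The remaining morphisms of $\KK(\DD)$ are those of the form $k_\DD \to (d,1)$. By the definition of a homotopy limit diagram (as constructed in Proposition \ref{ExtensionProp} via a Reedy fibrant replacement and an actual limit), these are, up to isomorphism in $\Ho(\MM)$, the canonical maps $u_d^{\DD}\colon \holim_\DD\,G|_{\DD\times\{1\}} \to G((d,1))$. Now $\mathrm{Nerve}(c\Sd^2\Lambda^n[n]) \cong \Sd^2\Lambda^n[n]$ is contractible because $\Lambda^n[n]$ is, so Proposition \ref{ContractibleHolim} applied to $G|_{\DD\times\{1\}}\colon \DD \to \we\MM$ shows that each $u_d^{\DD}$ is an isomorphism in $\Ho(\MM)$. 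By the saturation property of fibration categories (\cite[Theorem 7.2.7]{RadCofibrations}, already invoked in the proof of Proposition \ref{PropYoneda}), each such isomorphism is in fact a weak equivalence. Thus $G$ factors through $\we\MM$, as desired.

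The principal obstacle I anticipate is the bookkeeping step of identifying the structural morphisms $k_\DD \to (d,1)$ of $\KK(\DD)$ with the canonical maps $u_d^{\DD}$ to the pieces of the homotopy limit; once one unpacks the construction of $G(k_\DD)$ in the proof of Proposition \ref{ExtensionProp} as the honest limit of a Reedy fibrant replacement, this identification is routine. Everything else reduces mechanically to 2-out-of-3 and to the contractibility of $\Lambda^n[n]$.
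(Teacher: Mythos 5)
Your proof is correct and takes essentially the same route as the paper: the paper likewise invokes Proposition \ref{ExtensionProp} together with Proposition \ref{ContractibleHolim} (with the implicit observation that $c\Sd^2\Lambda^n[n]$ is finite inverse and has contractible nerve) to produce the extension $\KK(c\Sd^2\Lambda^n[n]) \to \we\MM$, and then concludes by Proposition \ref{OldFibrancy}. You simply make explicit the details the paper leaves tacit, namely the 2-out-of-3 check on the $1$-copy of $\DD$ and the saturation step turning the $\Ho(\MM)$-isomorphisms from Proposition \ref{ContractibleHolim} into genuine weak equivalences.
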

\begin{proof}
 The category $c\Sd^2\Lambda^k[n]$ is inverse. Indeed, $c\Sd^2\Lambda^k[n]\subset c\Sd^2\Delta[n]$ can be viewed as consisting of chains of subsets of $\underline{n}$ and the length of the chain provides the inverse structure. 
 
Let now $(\MM, \we\MM, \FF)$ be a fibration category and $F\colon c\Sd^2 \Lambda^n[n] \to \we \MM$ be a diagram. By Proposition \ref{ExtensionProp} and Proposition \ref{ContractibleHolim}, we can extend $F$ to a diagram $\KK(c\Sd^2 \Lambda^n[n]) \to \we \MM$. Proposition \ref{OldFibrancy} implies the statement. 
\end{proof}

\begin{remark}The proof in \cite{M-O14} of the fibrancy of partial model categories was considerably harder as no analogue of Reedy fibrant replacement for functors indexed by inverse categories exists for general partial model categories.
\end{remark}

Showing fibrancy in the Barwick--Kan model structure is more complicated than in the Thomason model structure. Before we formulate a fibrancy criterion, we have to discuss certain preliminaries. 

We can identify $\KK(c\Sd^2\partial \Delta[n])$ with $\xi \un$ as categories as follows: Objects in $c\Sd^2\partial \Delta[n]$ can be identified with ascending non-empty chains 
\[A_\bullet = (A_0\subsetneq \cdots \subsetneq A_m)\]
of non-empty subsets of $\un$ such that $A_m \neq \un$.\footnote{Here and in the following, we abuse notation by using $\un$ both for the category of $n$ composable morphisms and for $\{0,1,\dots, n\}$, its set of objects.} For such a chain, we identify $(A_\bullet, 1)$ in $\KK(c\Sd^2\partial \Delta[n])$ with $A_0\subsetneq \cdots \subsetneq A_m \subsetneq \un$ and $k_{c\Sd^2\partial \Delta[n]}$ with the chain just consisting of $\un$ in $\xi(\un)$. We refer to \cite[Remark 4.1]{M-O14} for a picture of this identification. If we choose a relative structure on $\un$, the relative structure of $\xi \un$ defines thus a relative structure on  $\KK(c\Sd^2\partial \Delta[n])$ and thus also on $\KK(c\Sd^2\Lambda^k[n])$ for every $0\leq k\leq n$. 

We are now ready to formulate the following fibrancy criterium that will be proved in slightly stronger form as Proposition \ref{FibrancyCriteriumProp}.

\begin{prop}\label{FibCrit}
Let $\MM$ be relative category. Assume that $\MM$ has the right lifting property with respect to all
\[c\Sd^2\Lambda^k[n] \to \KK(c\Sd^2\Lambda^k[n])\]
for $n\geq 1$ and $0\leq k \leq n$, where the relative structure on $\KK(c\Sd^2\Lambda^k[n])$ is induced by an arbitrary relative structure on $\underline{n}$ such that $(n-1)\to n$ is a weak equivalence if $k=n$. Then $N_\xi\MM$ is Reedy fibrant.
\end{prop}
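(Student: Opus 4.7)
The plan is to translate Reedy fibrancy of the bisimplicial set $N_\xi\MM$ into a lifting problem in $\RelCat$ via the adjunction $(K_\xi,N_\xi)$, and then to exhibit the resulting extension maps as cellular colimits of inclusions of the form $c\Sd^2\Lambda^k[n]\to\KK(c\Sd^2\Lambda^k[n])$.

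First, since the generating trivial Reedy cofibrations in $\ssSet$ are the pushout-product horns
\[\iota_{p,q,k}\colon\partial\Delta[p]\otimes\Delta[q]\,\cup\,\Delta[p]\otimes\Lambda^k[q]\hookrightarrow\Delta[p]\otimes\Delta[q],\]
the simplicial space $N_\xi\MM$ is Reedy fibrant if and only if $\MM$ has the right lifting property in $\RelCat$ against $K_\xi\iota_{p,q,k}$ for all $p\geq 0$, $q\geq 1$, and $0\leq k\leq q$. By Example \ref{undescription} applied to $\underline{\check p}\times\underline{\hat q}$, the target $K_\xi(\Delta[p]\otimes\Delta[q])=\xi(\underline{\check p}\times\underline{\hat q})$ has as objects the ascending chains of nonempty subsets of $\underline p\times\underline q$, with weak equivalences detected by the last-vertex map into $\underline{\check p}\times\underline{\hat q}$. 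The source of $K_\xi\iota_{p,q,k}$ is the full sub-relative-category spanned by those chains contained in a proper face in the $\underline p$-direction or in a face of $\underline q$ that omits the $k$-th vertex.

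Second, I filter the inclusion $K_\xi\iota_{p,q,k}$ by attaching the missing objects of the target one at a time, in an order compatible with chain length (longest first, say) and a chosen total order on maximal chains. At each step, the already-built sub-relative-category $\UU$ is enlarged by one object $A_\bullet$; the under-category of $A_\bullet$ in $\UU$ should be identifiable with $c\Sd^2\Lambda^{k'}[n']$ for some $n'$ and $k'$, while $\UU\cup\{A_\bullet\}$ then receives the shape $\KK(c\Sd^2\Lambda^{k'}[n'])$ with $A_\bullet$ playing the role of the \emph{partial initial object} $k_\DD$. Each such extension step can be carried out by the hypothesized lifting property, and assembling them yields the required lift against $K_\xi\iota_{p,q,k}$.

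The main obstacle will be this combinatorial identification. I expect two issues to require care: first, checking that the under-category of each new $A_\bullet$ indeed has the shape of a horn-subdivision, which amounts to tracking how face-inclusions in $\Delta[p]\otimes\Delta[q]$ manifest in the double subdivision $\xi(\underline{\check p}\times\underline{\hat q})$; and second, verifying that the relative structure inherited on $\underline{n'}$ meets the stated condition, in particular that $(n'-1)\to n'$ is a weak equivalence whenever $k'=n'$. The second point should follow because the terminal-horn case arises only when the missing face points in the $\underline{\hat q}$-direction, where every morphism is a weak equivalence by maximality of the relative structure on $\underline{\hat q}$, while the other horn cases impose no constraint on the relative structure.
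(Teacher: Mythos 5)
Your first step is correct and matches the paper's Lemma~\ref{LiftingLemma1}: Reedy fibrancy of $N_\xi\MM$ translates via the $(K_\xi, N_\xi)$ adjunction into the right lifting property against the maps $K_\xi\iota_{p,q,k}$, with the target identified with $\xi(\underline{\check p}\times\underline{\hat q})$ and the relative structure as you describe.

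The second step, however, has a genuine gap. You propose to filter $K_\xi\iota_{p,q,k}$ by adding the missing objects one at a time, with each step exhibited as an attachment of $\KK(c\Sd^2\Lambda^{k'}[n'])$ along $c\Sd^2\Lambda^{k'}[n']$, the new object $A_\bullet$ playing the partial initial object $k_\DD$. This cannot work as stated. First, the cokernel of $c\Sd^2\Lambda^{k'}[n'] \to \KK(c\Sd^2\Lambda^{k'}[n'])$ consists of the entire $1$-copy of $c\Sd^2\Lambda^{k'}[n']$ together with $k_\DD$, so a cobase change along such a map adds a large number of objects, not one. Second, and more fundamentally, $k_\DD$ receives no morphisms in $\KK(\DD)$; but in $\xi(\underline{\check p}\times\underline{\hat q})$ morphisms \emph{into} a chain $A_\bullet$ come from its proper subchains, many of which already lie in the source $Y^0$ of the lifting problem (the source is a sieve: any subchain of a chain in $Y^0$ is again in $Y^0$, while proper subchains of a chain outside $Y^0$ can well be inside). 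So $A_\bullet$ generically has incoming morphisms from $\UU$ that the $\KK$-shape cannot accommodate, regardless of which order you choose to add objects. Third, the poset of superchains of $A_\bullet$ in the double subdivision (the link, roughly) has the shape of a subdivided sphere or boundary, not a subdivided horn, so even the identification of the under-category with $c\Sd^2\Lambda^{k'}[n']$ is not available without restricting $\UU$ in a way you have not specified.

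The paper resolves this by a different two-step reduction. Lemma~\ref{LiftingLemma2} shows that the maps $K_\xi\iota_{p,q,k}$ lie in the cellular closure of the (larger) maps $c\Sd^2\Lambda^{k'}[n'] \to \xi\underline{n'}$; this is proved by working in \emph{marked} simplicial sets and adapting the Dugger--Spivak box product lemma, so that a single cell attached adds all chains whose top set is $\underline{n'}$ or $\underline{n'}\setminus\{k'\}$ at once, organized by a filtration over simplices rather than over objects of the subdivision. Then Proposition~\ref{FibrancyCriteriumProp} separately reduces lifting against $c\Sd^2\Lambda^{k'}[n'] \to \xi\underline{n'}$ to lifting against $c\Sd^2\Lambda^{k'}[n'] \to \KK(c\Sd^2\Lambda^{k'}[n'])$ by constructing an explicit order-preserving, weak-equivalence-preserving retraction $r\colon \xi\underline{n'} \to \KK(c\Sd^2\Lambda^{k'}[n'])$; it is the retraction argument, not a cellular decomposition, that brings the $\KK$-shapes into play. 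If you want to pursue your direct approach you would need to confront the incoming-morphism problem head on, most likely by abandoning the ``one object at a time'' filtration and replacing the pushout step with a retraction step in the spirit of Proposition~\ref{FibrancyCriteriumProp}; at that point the structure of the argument would essentially coincide with the paper's.
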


Let now and in the following $n\geq 1$ and $0\leq k\leq n$ be fixed numbers. Equip $\un$ with an arbitrary relative structure such that $(n-1)\to n$ is a weak equivalence if $k=n$. Set for the rest of the section $\DD = c\Sd^2\Lambda^k[n]$, with relative structure induced by that on $\un$. We now want to describe the weak equivalences in $\DD$ more concretely: 

 The functor 
\[\pi = \phi|_\DD\colon \DD \subset \xi\un \xrightarrow{\phi} \un\]
described in Example \ref{undescription} detects and preserves weak equivalences. This implies the following description of weak equivalences: All morphisms $(A_\bullet, 0) \to (A_\bullet, 1)$ are weak equivalences. A morphism $(A_\bullet, i)\to (B_\bullet, i)$ for $i=0$ or $1$ is a weak equivalence if and only if $\pi(A_\bullet) \simeq \pi(B_\bullet)$ in $\un$. Furthermore, $k_{\DD} \to A_\bullet \times 1$ is a weak equivalence if and only if $\pi(A_\bullet) \simeq 0$ in $\un$. \\

Let now and in the following $\MM$ be a fibration category and $F\colon \DD \to \MM$ be a relative functor. To apply Proposition \ref{FibCrit}, we need to show that the functor $G\colon \KK(\DD) \to \MM$ constructed in Proposition \ref{ExtensionProp} is actually a relative functor. This is clear for $n=1$, so we will assume that $n\geq 2$ in the following. Then the following proposition implies exactly that. 

\begin{prop}\label{HolimProp}Let $F\colon \DD \to \MM$ be a relative functor. Then $\holim_\DD F \to F(0)$ is a weak equivalence. Here, we identify $0$ with the object of $\DD$ corresponding to the chain of subsets of $\un$ just consisting of $\{0\}$, i.e.\ with the $0$-corner.\end{prop}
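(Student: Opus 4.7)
My strategy is to decompose $\DD$ using cosieves and apply the decomposition formula of Proposition \ref{DecompositionHolim} together with the cofinality result of Proposition \ref{FinalHolim} and the contractibility criterion of Proposition \ref{ContractibleHolim}.

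First, I single out the cosieve $U := \DD_{\{0\}} \subseteq \DD$ of chains whose smallest subset equals $\{0\}$; this is a cosieve because any refinement of a chain beginning with $\{0\}$ still begins with $\{0\}$ (the added subsets are comparable with $\{0\}$, which is already minimal among non-empty sets). The object $(\{0\})$ is initial in $U$, and every morphism of $U$ is a weak equivalence in $\DD$: both endpoints lie over $0 \in \un$, so the image under $\pi$ is the identity $0 \to 0$. Thus $F|_U$ lands in $\we\MM$, and Proposition \ref{ContractibleHolim} gives $\holim_U F \simeq F((\{0\})) = F(0)$ in $\Ho(\MM)$.

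Second, let $V := \DD \setminus \{(\{0\})\}$; since the one-element chain $(\{0\})$ has no non-identity predecessors, $V$ is also a cosieve, and $\DD = U \cup V$ with $U \cap V = U \setminus \{(\{0\})\}$. Proposition \ref{DecompositionHolim} yields
\[
\holim_\DD F \;\simeq\; F(0) \,\times^h_{\holim_{U \cap V} F}\, \holim_V F,
\]
so it suffices to show that the canonical comparison $\holim_V F \to \holim_{U \cap V} F$ is an isomorphism in $\Ho(\MM)$, after which the homotopy pullback collapses to $F(0)$. For this I would try to exhibit the inclusion $U \cap V \hookrightarrow V$ as homotopically initial via Proposition \ref{FinalHolim}. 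For $v \in U \cap V$, writing $v = (\{0\} \subsetneq v_1 \subsetneq \cdots \subsetneq v_m)$ with $m \geq 1$, the comma category $(U \cap V)/v$ identifies with the poset of non-empty subsets of $\{v_1, \dots, v_m\}$, whose nerve is a contractible subdivided simplex.

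The main obstacle is the remaining case: for $v \in V \setminus U$ (equivalently $v_0 \neq \{0\}$), no subchain of $v$ can contain $\{0\}$, so $(U \cap V)/v = \emptyset$ and the inclusion fails to be homotopy initial as stated. I would address this by decomposing $V$ further via additional cosieves, isolating the chains $v$ with $0 \notin v_i$ for all $i$; on these chains the relative structure on $\un$ — specifically the hypothesis $(n-1) \to n \in \we\un$ when $k = n$ — forces enough morphisms of $\DD$ to be weak equivalences to enable an inductive argument on $n$ (reducing $\Lambda^k[n]$ to $\Lambda^{k-1}[n-1]$ or to $\partial\Delta[n-1]$). The contractibility of the auxiliary comma categories that appear in this induction is exactly the ``contractibility of certain subsets of simplices'' postponed to Section \ref{CLSection}.
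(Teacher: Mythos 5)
Your opening move is sound: $U = \{W_\bullet : W_0 = \{0\}\}$ is indeed a cosieve (a refinement of a chain starting with $\{0\}$ cannot insert anything below the singleton), $(\{0\})$ is initial in $U$ so its nerve is contractible, all morphisms of $U$ lie over $0\in\un$ and are therefore weak equivalences, and hence $\holim_U F \simeq F(0)$ by Proposition~\ref{ContractibleHolim}. The decomposition $\DD = U \cup V$ with $V = \DD\setminus\{(\{0\})\}$ is also a legitimate cosieve decomposition, and Proposition~\ref{DecompositionHolim} gives the homotopy pullback square you write down.

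The gap is that everything difficult about the proposition is concentrated in the step you leave open, namely showing $\holim_V F \to \holim_{U\cap V}F$ is an isomorphism in $\Ho(\MM)$, and your sketch for addressing it does not go through as stated. You correctly observe that $U\cap V \hookrightarrow V$ is \emph{not} homotopically initial, since $(U\cap V)/v = \emptyset$ for $v\in V\setminus U$. Your proposed repair --- isolating the chains $v$ with $0\notin v_i$ for all $i$ and decomposing $V$ further --- runs into an immediate problem: that collection is not a cosieve in $V$ (one can refine the chain $(\{1\})$ to $(\{1\}\subsetneq\{0,1\})$, adding a set that contains $0$), so Proposition~\ref{DecompositionHolim} cannot be applied to it directly. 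More importantly, the remaining reduction would have to account for the interaction between all the fibers $\pi^{-1}(i)$ simultaneously, which is precisely where the inductive structure has to live, and your proposal just defers this to an unspecified ``inductive argument on $n$.'' The paper's proof instead filters $\DD$ by the cosieves $\pi^{-1}(\underline{0})\subset\pi^{-1}(\underline{1})\subset\cdots\subset\pi^{-1}(\underline{n}) = \DD$ (chains whose image under $\pi$ lands in $\underline{i}$), and at each stage uses a pullback decomposition (Lemma~\ref{PieLimit}) together with the contractibility of $\pi^{-1}(i)$ and of the interface $\pi^{-1}(i)\cap V_+\pi^{-1}(\underline{i-1})$ (Lemmas~\ref{Contractible}, \ref{lastlemma}) to show that each inclusion step induces an equivalence of homotopy limits, reducing to $\holim_{\pi^{-1}(\underline{0})}F\simeq F(0)$. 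This descending filtration keeps every piece a cosieve and makes each comparison map into one whose source and target are both equivalent to a single value of $F$, which is exactly what your decomposition into $U$ and $V$ fails to arrange for the chains that never see $0$. To complete your route you would essentially have to rediscover this $\pi$-fiberwise filtration inside $V$; as written, the proposal stops just before the real content of the argument.
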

The basic intuition is that after collapsing all weak equivalences to identities, $c\Sd^2\Lambda^k[n]$ becomes a quotient of $\un$ with $0$ as initial object. Of course, more care has to be taken for an actual proof. We will proceed inductively over the $\pi^{-1}(\underline{i})$ for $i\leq n$ and need for that a few intermediate results.

\begin{lemma}\label{PieLimit}Let $i, j$ be integers with $0\leq j\leq i \leq n$. Then there is a homotopy pullback diagram
 \[\xymatrix{
 \holim_{\pi^{-1}(\underline{i})}F \ar[r]\ar[d] & \holim_{\pi^{-1}(\underline{j})}F \ar[d] \\  \holim_{\pi^{-1}(\underline{i}\setms\underline{j})}F \ar[r] & \holim_{\pi^{-1}(\underline{i}\setms\underline{j})\cap V_+\pi^{-1}(\underline{j})}F 
 }\]
 where the horizontal maps and the left vertical map are the canonical maps.
Here, $V_+\CC \subset \DD$ denotes for a subcategory $\CC \subset \DD$ the full subcategory of all $d\in\DD$ such that a morphism $c\to d$ in $\DD$ with $c\in\CC$ exists. 
\end{lemma}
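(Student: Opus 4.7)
The strategy is to apply Proposition \ref{DecompositionHolim} to a pair of cosieves in $\DD':=\pi^{-1}(\underline{i})$, and then use Proposition \ref{FinalHolim} to identify the homotopy limit over one of them with the homotopy limit over $\Ac:=\pi^{-1}(\underline{j})$.

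First, $\pi$ preserves the order on morphisms of $\DD\subset \xi\un$: if $A_\bullet\to A_\bullet'$, then $A_\bullet'$ refines $A_\bullet$, so $A_0'\subseteq A_0$ and hence $\pi(A_\bullet)=\min A_0\le \min A_0'=\pi(A_\bullet')$. Thus each $\pi^{-1}(\underline{k})\subset \DD$ is a sieve, and in particular $\BB:=\pi^{-1}(\underline{i}\setms\underline{j})$ (the complement of $\Ac$ in $\DD'$) is a cosieve in $\DD'$. Since $A_0\subsetneq\cdots\subsetneq A_m$ makes the sequence $\min A_k$ weakly decreasing in $k$, one computes directly
\[V_+\Ac = \{A_\bullet\in \DD : \min A_m\le j\},\]
a cosieve in $\DD$ containing $\Ac$. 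Hence $\widetilde{\Ac}':=V_+\Ac\cap \DD'$ is a cosieve in $\DD'$. Moreover $\widetilde{\Ac}'\cup \BB=\DD'$ (if $A_\bullet\in\DD'\setms \BB$, then $\min A_m\le \min A_0\le j$, so $A_\bullet\in \widetilde{\Ac}'$), and $\widetilde{\Ac}'\cap \BB$ is exactly the bottom-right entry of the square in the lemma.

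The key step is to verify that the inclusion $\Ac\hookrightarrow \widetilde{\Ac}'$ is homotopically initial. Given $A_\bullet=(A_0\subsetneq\cdots\subsetneq A_m)\in\widetilde{\Ac}'$, let $m^\star$ be the smallest $k$ with $\min A_k\le j$ (well-defined since $\min A_m\le j$). A subchain $(A_k)_{k\in S}$ of $A_\bullet$ lies in $\Ac$ iff $\min A_{\min S}\le j$, i.e.\ iff $\min S\ge m^\star$. Hence $\Ac/A_\bullet$ identifies with the poset of non-empty subsets of $\{m^\star,\ldots,m\}$ ordered by inclusion; its nerve is $\Sd\Delta[m-m^\star]$, which is contractible.

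With these ingredients, Proposition \ref{DecompositionHolim} applied to the cosieves $(\widetilde{\Ac}',\BB)$ of $\DD'$ gives a homotopy pullback square with corners $\holim_{\DD'}F$, $\holim_{\widetilde{\Ac}'}F$, $\holim_\BB F$ and $\holim_{\widetilde{\Ac}'\cap \BB}F$, and Proposition \ref{FinalHolim} together with the initiality above yields $\holim_{\widetilde{\Ac}'}F\simeq\holim_\Ac F$, compatibly with the canonical projection to $\holim_{\widetilde{\Ac}'\cap\BB}F$. Functoriality of canonical maps, $u^{\DD'}_\Ac=u^{\widetilde{\Ac}'}_\Ac\circ u^{\DD'}_{\widetilde{\Ac}'}$, then matches the top and left maps of the resulting square with the canonical maps named in the statement. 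The main subtlety is that the naive choice $(\Ac,\BB)$ fails—$\Ac$ is a sieve rather than a cosieve—so one must pass to the cosieve closure $V_+\Ac$ and use the above comma-category computation to justify that this enlargement does not change the homotopy limit.
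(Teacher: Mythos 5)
Your proposal is correct and follows essentially the same route as the paper: decompose $\pi^{-1}(\underline{i})$ into the two cosieves $\pi^{-1}(\underline{i})\cap V_+\pi^{-1}(\underline{j})$ and $\pi^{-1}(\underline{i}\setms\underline{j})$, apply Proposition \ref{DecompositionHolim}, and then use Proposition \ref{FinalHolim} to replace $\holim$ over the first cosieve by $\holim$ over $\pi^{-1}(\underline{j})$. The one cosmetic difference is in verifying that $\pi^{-1}(\underline{j})\hookrightarrow \pi^{-1}(\underline{i})\cap V_+\pi^{-1}(\underline{j})$ is homotopically initial: the paper exhibits a terminal object in each comma category $\pi^{-1}(\underline{j})/A_\bullet$ (drop the $w_r$ with $\min w_r > j$), whereas you identify the whole comma category with the poset of non-empty subsets of $\{m^\star,\ldots,m\}$, i.e.\ with $\Sd\Delta[m-m^\star]$ — both give contractibility, with the paper's version a touch shorter.
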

\begin{proof}
 We should first explain the right vertical map inside the homotopy pullback diagram. We claim that the canonical map
 \[\holim_{\pi^{-1}(\underline{i}) \cap V_+\pi^{-1}(\underline{j})}F \to \holim_{\pi^{-1}(\underline{j})}F\]
is an equivalence. Indeed, we can describe the two relevant categories as:
\begin{align*}
\pi^{-1}(\underline{j}) &= \{w_0\subsetneq \cdots \subsetneq w_m\,|\,c\in w_0\text{ for some }c\leq j \}\\
\pi^{-1}(\underline{i}) \cap V_+\pi^{-1}(\underline{j}) &= \{w_0\subsetneq \cdots \subsetneq w_m\,|\, a\in w_0\text{ for some }a\leq i\text{ and }b\in w_m \text{ for some }b\leq j \} 
 \end{align*}
For an arbitrary
\[d =(w_0\subsetneq \cdots \subsetneq w_m)\in \pi^{-1}(\underline{i}) \cap V_+\pi^{-1}(\underline{j}),\] the category $\pi^{-1}(\underline{j})/d$ has a terminal object: Just delete every $w_r$ that does not contain some $c\leq j$. Thus, $\pi^{-1}(\underline{j}) \to \pi^{-1}(\underline{i}) \cap V_+\pi^{-1}(\underline{j})$ is cofinal and the homotopy limits agree by Proposition \ref{FinalHolim}.

Next we observe that $\pi^{-1}(\underline{i}) \cap V_+\pi^{-1}(\underline{j}) \subset \pi^{-1}(\underline{i})$ and $\pi^{-1}(\underline{i}\setms\underline{j}) \to \pi^{-1}(\underline{i})$ are cosieves. Thus, the result follows by Proposition \ref{DecompositionHolim}.
\end{proof}

The following lemma will be proven in Section \ref{CLSection}.

\begin{lemma}\label{Contractible} The nerves of the categories
\begin{itemize}
\item $\pi^{-1}(\EE)$ for every non-empty connected subcategory $\EE\subseteq \un$ that is not $\un \setminus \{k\}$,
\item $\pi^{-1}(i) \cap V_+\pi^{-1}(\underline{i-1})$ for $i\geq 1$ if $k<n$ or $i< n-1$, and 
\item $\pi^{-1}((n-1)\to n) \cap V_+\pi^{-1}(\underline{n-2})$ for $k=n\geq 2$.
\end{itemize}  
are (weakly) contractible. 
\end{lemma}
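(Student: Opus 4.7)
The plan is to prove each of the three contractibility claims by exhibiting an explicit deformation retract of the poset in question onto a subposet whose contractibility is manifest, the retract realized via natural transformations of functors.

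For the first bullet, when $\EE = \un$ the identification of Example \ref{undescription} gives $\mathrm{Nerve}(\pi^{-1}(\un)) = \mathrm{Nerve}(\DD) \cong \Sd^2\Lambda^k[n]$, which is contractible since $\Lambda^k[n]$ is. For a proper connected $\EE$ with object set $S \neq \un\setminus\{k\}$, the hypothesis provides some $j \in \un \setminus S$ with $j \neq k$; the target of the retract is the full subposet $\CC_j \subseteq \pi^{-1}(\EE)$ consisting of chains whose subsets all avoid $j$. Since $j \neq k$, the conditions $A_m \neq \un, \un\setminus\{k\}$ defining $\DD$ are automatic on $\CC_j$, and so $\CC_j$ is essentially $c\Sd^2\Delta[\un\setminus\{j\}]$ with nerve the contractible subdivided simplex $\Sd^2\Delta[\un\setminus\{j\}]$. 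The retraction $r$ is defined by pointwise removal of $j$ followed by removing consecutive duplicates.

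For the second and third bullets, the extra constraint $d_m \cap \{0,\dots,i-1\} \neq \emptyset$ (respectively, $d_m \cap \{0,\dots,n-2\} \neq \emptyset$) prompts a two-stage retraction: first, prepend $\{i\}$ (respectively, the analogous singleton) to $d_\bullet$ whenever it is not already the leading subset, using the natural transformation $d_\bullet \hookrightarrow \{i\} \cup d_\bullet$; second, recognize the resulting subposet, after removing the common element $i$ from each subsequent subset via $d_\ell \mapsto d_\ell \setminus \{i\}$, as a subposet to which the first bullet applies. The case restrictions $k<n$ or $i<n-1$ (bullet 2) and $k=n\geq 2$ (bullet 3) are precisely those that prevent the reduction from producing the forbidden $\un\setminus\{k\}$ configuration of the first bullet.

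The main obstacle is the construction of the natural transformation, or zigzag thereof, connecting $r$ to the identity in the first bullet. The interleaved chain $(A_0, A_0\setminus\{j\}, A_1, A_1\setminus\{j\},\dots)$ is not totally ordered under $\subseteq$ when $j \in A_{i-1}$, since then $A_{i-1}$ and $A_i\setminus\{j\}$ are incomparable, so a single natural transformation does not suffice. I expect to overcome this by an iterative zigzag, stripping $j$ from one $A_i$ at a time from the top down, so that each intermediate chain is connected to the next by a single subset-level modification and hence a single natural transformation. Every intermediate chain must be verified to remain in $\DD$, which relies crucially on $j \neq k$ to ensure no intermediate maximal subset equals $\un\setminus\{k\}$.
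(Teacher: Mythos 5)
Your proposal has genuine gaps in all three bullets, and the zigzag difficulty you flag is fatal as stated.

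For the first bullet, the retraction target $\CC_j$ is \emph{not} ``essentially $c\Sd^2\Delta[\un\setminus\{j\}]$.'' Even after the defining conditions $A_m\neq\un,\ \un\setminus\{k\}$ become vacuous, the constraint $\min(A_0)\in\EE$ coming from $\pi^{-1}(\EE)$ remains, so $\CC_j$ is again a $\pi^{-1}(\EE)$-type subposet (inside the subdivided simplex on $\un\setminus\{j\}$), not a full subdivided simplex; the problem has merely been shrunk, not solved. Iterating to strip out all of $\un\setminus(\Ob\EE\cup\{k\})$ still leaves this constraint whenever $k\notin\Ob\EE$. Moreover, the map $r$ deleting $j$ from every $A_i$ cannot be connected to the identity by any natural transformation: in the poset $\DD$ one has $B_\bullet\leq A_\bullet$ iff $\{B_0,\dots\}\subseteq\{A_0,\dots\}$ as sets of subsets, and $\{A_i\setminus\{j\}\}$ is neither contained in nor contains $\{A_i\}$ in general. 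Your proposed top-down iterative fix does not repair this: replacing only $A_m$ by $A_m\setminus\{j\}$ already fails to produce a chain when $j\in A_{m-1}$, which is the same comparability obstruction you identified for the interleaved chain.

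For the second and third bullets, the extra condition coming from $V_+\pi^{-1}(\underline{i-1})$ is a condition on the \emph{top} set of the chain (namely $W_l\cap\{0,\dots,i-1\}\neq\emptyset$), whereas $\pi$ reads the minimum of the \emph{bottom} set $W_0$. After your prepend-$\{i\}$ adjunction and the deletion of $i$, the residual constraint is still top-set-based, so the resulting poset is not of the form $\pi^{-1}(\EE)$ and the first bullet cannot be invoked. Accordingly, your explanation of the case restrictions as avoiding the $\un\setminus\{k\}$ configuration of bullet one does not reflect how those restrictions actually enter.

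The paper's proof takes a different route throughout. For bullet one it passes through two intermediate subposets, $\CC=\{W_\bullet : W_0\subseteq\EE\}$ and $\DD'=\{W_\bullet : W_l\subseteq\EE\}$, establishing $\mathrm{Nerve}\,\pi^{-1}(\EE)\simeq\mathrm{Nerve}\,\CC$ by Quillen's Theorem A (with an adjunction to compute undercategories) and $\mathrm{Nerve}\,\CC\simeq\mathrm{Nerve}\,\DD'$ by a direct adjunction, then identifies $\mathrm{Nerve}\,\DD'\cong\Sd^2\mathrm{Nerve}\,\EE$. For bullet two it uses the prepend-$\{i\}$ adjunction as you do, but then identifies $\overline{X}_i^k$ with a boundary or horn of $\Delta[n-1]$ from which the open star of a face has been removed, proving contractibility by an explicit deformation on the topological realization. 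Bullet three is handled by decomposing $Y$ into three pieces glued along copies of $c\Sd^2\partial\Delta[n-2]$ and identifying the realization with $D^{n-2}\cup_{S^{n-3}}(S^{n-3}\times I)\cup_{S^{n-3}}(S^{n-3}\times I)\cong D^{n-2}$.
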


The next lemma follows now easily. 
\begin{lemma}\label{lastlemma}The maps 
\begin{itemize}
\item[(i)] $\holim_{\pi^{-1}(i)} F \to F(i)$, 
\item[(ii)] $\holim_{\pi^{-1}(i)} F \to \holim_{\pi^{-1}(i)\cap V_+\pi^{-1}(\underline{i-1})}F$ for $i\geq 1$ if $k<n$ or $i< n-1$, and
\item[(iii)]  $\holim_{\pi^{-1}((n-1) \to n)} F \to \holim_{\pi^{-1}((n-1)\to n)\cap V_+\pi^{-1}(\underline{n-2})}F$ for $k=n$ 
\end{itemize} 
are weak equivalences.\end{lemma}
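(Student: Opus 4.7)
The plan is to derive all three parts uniformly by combining Lemma \ref{Contractible} with Proposition \ref{ContractibleHolim} and the functoriality of canonical maps. The decisive observation is that on each subcategory $\EE \subseteq \DD$ appearing below, every morphism is a weak equivalence in $\DD$, so $F|_\EE$ factors through $\we\MM$; hence whenever $\mathrm{Nerve}\,\EE$ is contractible, Proposition \ref{ContractibleHolim} makes every canonical evaluation $u^\EE_{\{d\}}\colon \holim_\EE F \to F(d)$ a weak equivalence.

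For part (i), every morphism in $\pi^{-1}(i)$ has constant $\pi$-image $i$ and is therefore a weak equivalence in $\DD$, since $\pi$ preserves and detects them. The first bullet of Lemma \ref{Contractible} applied to $\EE = \{i\} \subseteq \un$ yields contractibility: $\{i\}$ is non-empty and connected, and differs from $\un \setminus \{k\}$ since the latter has cardinality $n \geq 2$. Proposition \ref{ContractibleHolim} then identifies $\holim_{\pi^{-1}(i)} F$ with $F$ evaluated at the chain $\{i\}$, which is exactly $F(i)$.

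For (ii) and (iii), write $\DD'$ for the outer index category ($\pi^{-1}(i)$ in (ii); $\pi^{-1}((n-1)\to n)$ in (iii)) and $\CC' \subseteq \DD'$ for its intersection with the relevant $V_+$-thickening. Fix any $d \in \CC'$. By the compatibility of canonical maps recalled in Section \ref{FibSection} we have $u_{\{d\}}^{\DD'} = u_{\{d\}}^{\CC'} \circ u_{\CC'}^{\DD'}$. All morphisms of $\DD'$ are weak equivalences: in (ii) because $\pi|_{\DD'}$ is constantly $i$, and in (iii) because the only non-identity value of $\pi$ on morphisms in $\DD'$ is $(n-1)\to n$, which is a weak equivalence by the standing hypothesis in the case $k=n$. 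By Lemma \ref{Contractible} -- first bullet for $\DD'$ (noting in case (iii) that $\EE = \{n-1,n\} \neq \un\setminus\{n\}$ since $n\in\EE$), and second or third bullet respectively for $\CC'$ -- both $\DD'$ and $\CC'$ have contractible nerves. Proposition \ref{ContractibleHolim} thus makes both $u_{\{d\}}^{\DD'}$ and $u_{\{d\}}^{\CC'}$ weak equivalences, and two-out-of-three applied to the factorisation above shows that $u_{\CC'}^{\DD'}$ is one, which is precisely the claim of (ii) and (iii).

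I do not expect a substantive obstacle here: the genuinely combinatorial work is packaged into the contractibility statements of Lemma \ref{Contractible}, and the only care required is bookkeeping which bullet of that lemma applies in each case and verifying in (iii) that the two-object subcategory $\{n-1,n\}$ is distinct from $\un\setminus\{n\}$.
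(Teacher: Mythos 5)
Your proof is correct and takes essentially the same route as the paper: both arguments combine the contractibility statements of Lemma \ref{Contractible} with Proposition \ref{ContractibleHolim} to identify source and target of the canonical map with a common $F(d)$, and then conclude by two-out-of-three applied to the factorisation $u_{\{d\}}^{\DD'} = u_{\{d\}}^{\CC'}\circ u_{\CC'}^{\DD'}$. The only cosmetic difference is that the paper names a specific witness $d$ (e.g.\ $\{i\}\subset\{i-1,i\}$) while you take an arbitrary $d\in\CC'$; this changes nothing.
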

\begin{proof}
As $\mathrm{Nerve}\, \pi^{-1}(i) \simeq \ast$ by Lemma \ref{Contractible}, Proposition \ref{ContractibleHolim} implies part (i). The same argument implies that source and target in (ii) are equivalent to $F(\{i\} \subset \{i-1,i\})$ and so (ii) follows from the 2-out-of-3 principle. 

Recall that for $k=n$ the map $(n-1)\to n$ is a weak equivalence in $\un$. Thus, by the same argument both source and target in (iii) are equivalent to $F(\{n-1\}\subset \{n-2,n-1\})$ and (iii) follows again from the 2-out-of-3 principle. 
 \end{proof}
 
 We are now ready for the proof of Proposition \ref{HolimProp}. 
 \begin{proof}[Proof of Proposition \ref{HolimProp}]
 Assume first that $k<n$. By Lemma \ref{lastlemma}, 
\[\holim_{\pi^{-1}(i)} F \to \holim_{\pi^{-1}(i)\cap V_+\pi^{-1}(\underline{i-1})}F\]
 is a weak equivalence for every $i\geq 1$. Thus, every homotopy pullback along this map is a weak equivalence, in particular, using Lemma \ref{PieLimit}, the map
\[\holim_{\pi^{-1}(\underline{i})}F \to \holim_{\pi^{-1}(\underline{i-1})}F.\]
By Lemma \ref{lastlemma}, it follows that the map
\[\holim_{\pi^{-1}(\underline{n})}F \to \cdots \to \holim_{\pi^{-1}(\underline{0})}F \to F(0)\]
is a weak equivalence for every $i\geq 1$. This shows Proposition \ref{HolimProp} in the case $k<n$. 

The same arguments show that $\holim_{\pi^{-1}(\underline{n-2})}F \to F(0)$ and the map from 
\[\holim_{\pi^{-1}(\underline{n})}F \simeq \holim_{\pi^{-1}((n-1)\to n)}F\times^h_{\pi^{-1}((n-1)\to n) \cap V_+\pi^{-1}(\underline{n-2})} \holim_{\pi^{-1}(\underline{n-2})}F\]
to  $\holim_{\pi^{-1}(\underline{n-2})}F$ are weak equivalences. Therefore their composition 
\[\holim_{\pi^{-1}(\underline{n})}F \to F(0)\]
 is also a weak equivalence in this case. 
 \end{proof}

As discussed above, Proposition \ref{HolimProp} implies:

\begin{thm}\label{TheoremFibration}
For a fibration category $\MM$, the simplicial space $N_\xi \MM$ is Reedy fibrant.
\end{thm}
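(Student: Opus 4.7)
I would apply the fibrancy criterion of Proposition \ref{FibCrit}, reducing the theorem to showing that for every $n\geq 1$, every $0\leq k\leq n$, and every relative structure on $\un$ (with $(n-1)\to n$ a weak equivalence when $k=n$), the relative category $\MM$ has the right lifting property against the inclusion $\DD\hookrightarrow\KK(\DD)$ with $\DD=c\Sd^2\Lambda^k[n]$. Given a relative functor $F\colon\DD\to\MM$, I would first invoke Proposition \ref{ExtensionProp} to produce an extension $G\colon\KK(\DD)\to\MM$ that sends each vertical edge $(d,0)\to(d,1)$ to a weak equivalence and whose restriction to $\DD^{\vartriangleleft}$ is a homotopy limit cone for a Reedy fibrant replacement of $F$. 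All that then remains is to verify that $G$ is in fact a relative functor.

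The case $n=1$ is immediate because $\DD$ reduces to a single object. For $n\geq 2$, the weak equivalences of $\KK(\DD)$ split into three classes: (a) the vertical edges $(d,0)\to(d,1)$, already handled by Proposition \ref{ExtensionProp}; (b) weak equivalences of the form $(A_\bullet,i)\to(B_\bullet,i)$ for $i\in\{0,1\}$, which descend from weak equivalences of $\DD$ and are sent to weak equivalences of $\MM$ because $F$ is relative (combined with the $2$-out-of-$3$ axiom applied through the vertical weak equivalences of (a)); and (c) the edges $k_\DD\to(A_\bullet,1)$ that appear whenever $\min A_0\simeq 0$ in $\un$. Under the identification of $G|_{\DD^{\vartriangleleft}}$ with a homotopy limit cone, an edge of type (c) is sent by $G$ to the canonical leg $u^\DD_{A_\bullet}\colon\holim_\DD F\to F(A_\bullet)$.

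Proposition \ref{HolimProp} states exactly that the leg $u^\DD_{\{0\}}$ is a weak equivalence. For a general $A_\bullet$ with $\min A_0\simeq 0$, I would transfer this conclusion to $u^\DD_{A_\bullet}$ by constructing a zigzag of weak equivalences in $\DD$ connecting $\{0\}$ to $A_\bullet$---for instance threading through intermediate chains such as $(\{0\}\subsetneq\{0,\min A_0\})$ and $(\{\min A_0\}\subsetneq A_0\subsetneq\cdots\subsetneq A_m)$, each of whose $\pi$-image lies in the weak equivalence class of $0$ in $\un$. Since $F$ is a relative functor, it sends this zigzag to a zigzag of weak equivalences in $\MM$; naturality of the limit cone and the $2$-out-of-$3$ axiom then propagate the conclusion of Proposition \ref{HolimProp} from $\{0\}$ to $A_\bullet$, showing that $u^\DD_{A_\bullet}$ is a weak equivalence as well.

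The main obstacle is ensuring that the intermediate chains in step (c) actually live inside $c\Sd^2\Lambda^k[n]$, i.e., that none of the subsets involved equals $\un$ or $\un\setminus\{k\}$. This is precisely the role of the hypotheses $n\geq 2$ and, when $k=n$, the weak equivalence $(n-1)\to n$ in $\un$: the latter allows one to sidestep the forbidden subset $\un\setminus\{n\}$ when $\min A_0$ is close to $n$, while the former guarantees enough room in $\DD$ for the needed two-element subsets to appear as chains.
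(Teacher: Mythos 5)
Your proposal follows the same overall strategy as the paper's: reduce to the lifting criterion of Proposition \ref{FibCrit}, produce the candidate extension $G$ via Proposition \ref{ExtensionProp}, and use Proposition \ref{HolimProp} to check that $G$ is relative. In the paper this last step is compressed into a single sentence asserting that Proposition \ref{HolimProp} ``implies exactly'' that $G$ is relative; your case split into (a) the vertical edges, (b) the horizontal weak equivalences in the two copies of $\DD$, and (c) the cone edges out of $k_{\DD}$, together with the explicit zigzag in case (c), supplies the reasoning that the paper leaves implicit. That is a genuine improvement in precision, and the use of $2$-out-of-$3$ (derivable from (F3)) and naturality of the limit cone is exactly right. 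You are also correct that the real content beyond Proposition \ref{HolimProp} is the transfer from the leg at $\{0\}$ to the leg at $A_\bullet$, and that the obstacle is keeping the auxiliary chains inside $c\Sd^2\Lambda^k[n]$.

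Where your sketch is imprecise is in the final paragraph explaining \emph{why} the auxiliary subsets avoid $\un$ and $\un\setminus\{k\}$. The statement ``$n\geq 2$ guarantees enough room'' is accurate only for $n\geq 3$: there $\{0,\min A_0\}$ has at most two elements while $\un\setminus\{k\}$ has $n\geq 3$, so no clash occurs. For $n=2$ a clash \emph{does} occur exactly when $(k,\min A_0)$ is $(1,2)$ or $(2,1)$, and the $\Lambda^n[n]$-condition ``$(n-1)\to n$ is a weak equivalence'' does not by itself dissolve it. The way out is different: the relative structures on $\un$ that one actually needs to consider (those arising in the proof of Lemma \ref{LiftingLemma2} from the product markings on $\widehat{\underline{n}}\times\check{\underline{m}}$) are determined on consecutive generators $j\to j+1$, so the hypothesis $0\simeq\min A_0$ forces, in those two exceptional cases, every morphism of $\un$ to be a weak equivalence. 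Then $F$ lands in $\we\MM$ and Proposition \ref{ContractibleHolim} yields $\holim_\DD F\simeq F(A_\bullet)$ directly, with no zigzag needed. Making that distinction explicit would close the one soft spot in an otherwise faithful reconstruction of the argument.
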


We now want to deduce our main theorem from this. Recall to that purpose that a full subcategory $\CC \subset \MM$ is called \textit{homotopically full} if $x\in \CC$ and $x\simeq y$ in $\MM$ already imply $y\in \CC$. The crucial ingredient is the following theorem (known in this form at least to Zhen Lin Low).
\begin{thm}[Low, Mazel-Gee, Cisinski]\label{Thm:LMGC}
Let $\MM$ be a fibration category and $\CC \subseteq \MM$ be a homotopically full subcategory. Then a Reedy fibrant replacement of $N(\CC,\we\CC)$ is a complete Segal space. 
\end{thm}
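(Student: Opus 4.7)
The plan is to bootstrap from Low's theorem \cite{Low15}, which asserts that a Reedy fibrant replacement of $N\MM$ is a complete Segal space whenever $\MM$ is a fibration category, and then to argue that the complete Segal space associated to a homotopically full subcategory is obtained by restricting to a union of connected components at each bidegree.

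First, I would observe that the inclusion $\CC \hookrightarrow \MM$ induces at each bidegree $(p,q)$ a levelwise inclusion $N(\CC,\we\CC)_{p,q} \hookrightarrow N(\MM,\we\MM)_{p,q}$: since $\CC \subseteq \MM$ is a full subcategory, a relative functor $\underline{\check{p}} \times \underline{\hat{q}} \to \MM$ factors through $\CC$ if and only if each object value lies in $\CC$.

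The core step is to show that, after Reedy fibrant replacement, this inclusion identifies $N(\CC,\we\CC)$ with a union of connected components of $N(\MM,\we\MM)$ at each bidegree. The path components of the $(p,q)$-level of the Reedy fibrant replacement of $N(\MM,\we\MM)$ are controlled by the Dwyer--Kan/hammock localization $L^H\MM$ of the fibration category via \cite{LMG14} and \cite{Low15}: two diagrams $\underline{\check{p}} \times \underline{\hat{q}} \to \MM$ lie in the same component if and only if they are connected by a zig-zag of natural weak equivalences in $\MM$. Homotopical fullness of $\CC$ then yields the claim at once, since such a zig-zag preserves objectwise membership in $\CC$.

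Finally, I would invoke the elementary observation that a sub-bisimplicial set of a Reedy fibrant complete Segal space $W$ which is a union of connected components at each bidegree is again a complete Segal space: both the Segal maps $W_n \to W_1 \times^h_{W_0} \cdots \times^h_{W_0} W_1$ and the completeness map $W_0 \to W_{\mathrm{heq}}$ are weak equivalences compatible with the restriction to chosen unions of components on either side, so both conditions restrict cleanly.

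The main obstacle is the middle step: one must justify with care that path components of $N(\MM,\we\MM)_{p,q}$ after Reedy fibrant replacement correspond exactly to equivalence classes under zig-zags of natural weak equivalences. This is a nontrivial use of hammock localization techniques pushed through to fibration categories in \cite{LMG14} and \cite{Low15}, and is precisely the content we borrow from the cited authors rather than reprove here.
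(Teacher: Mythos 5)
Your proposal is correct, but it takes a genuinely different route from the paper. The paper's proof never passes through Low's statement for the ambient category $\MM$: instead it restricts to the fibrant objects $\MM^\circ$ (a Brown category of fibrant objects), sets $\CC^\circ = \CC \cap \MM^\circ$, invokes results of Low to conclude that $\CC^\circ$ admits a homotopical three-arrow calculus, applies \cite{LMG14} directly to $\CC^\circ$, and then uses a theorem of Cisinski together with homotopical fullness to identify $N(\CC,\we\CC)$ with $N(\CC^\circ,\we\CC^\circ)$ up to Reedy equivalence. You instead take Low's theorem for $\MM$ as a black box and argue that $N(\CC,\we\CC)$ sits inside $N(\MM,\we\MM)$, levelwise, as the union of those path components whose objects lie in $\CC$ --- which is exactly what homotopical fullness controls --- and then check that a ``full'' union of components of a complete Segal space (one determined by a subset of $\pi_0 W_0$) is again a complete Segal space. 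That last step is slightly less ``elementary'' than you suggest and the phrasing needs a small repair: it is not true for an arbitrary levelwise union of components, but it \emph{is} true for the full sub-Segal-space spanned by a subset of $\pi_0 W_0$, and that is what you have. One should verify (i) that the matching objects $M_p W'$ are again unions of components of $M_p W$ so that $W'$ inherits Reedy fibrancy, (ii) that the Segal maps restrict component-by-component, which uses that the full sub-space is a pullback on $\pi_0$ along the vertex maps, and (iii) that $W'_{\mathrm{heq}} = W_{\mathrm{heq}} \cap W'_1$, which uses that a homotopy inverse of an edge between $\CC$-objects again lies between $\CC$-objects. All of these do go through, so your argument is sound. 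The tradeoff: your route is arguably more conceptual and avoids re-examining the three-arrow calculus for the subcategory, whereas the paper's route stays entirely internal to $\CC$ and leans directly on the machinery already set up in \cite{Low15} and \cite{LMG14}.
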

\begin{proof}
The full subcategory of fibrant objects $\MM^\circ$ of $\MM$ forms a category of fibrant objects in the sense of Brown. Set $\CC^\circ = \MM^\circ \cap \CC$. 

By \cite[Theorem A.5]{Low15}, combined with \cite[Lemma 3.7, Lemma 3.11 and Remark 2.9]{Low15}, $\CC^\circ$ has a homotopical three-arrow calculus. By the main result of \cite{LMG14}, this implies that a Reedy fibrant replacement of $N(\CC^\circ, \we\CC^\circ)$ is a complete Segal space. 

A theorem of Cisinski (\cite[Theorem A.3]{Low15}) implies that the inclusions 
\[\mathrm{Nerve }\we ((\MM^\circ)^{\underline{p}}) \to \mathrm{Nerve }\we (\MM^{\underline{p}})\]
 are weak equivalences for all $p\geq 0$. As $\CC\subseteq \MM$ is homotopically full, this implies that also the inclusions $\mathrm{Nerve }\we (\CC^\circ)^{\underline{p}} \to \mathrm{Nerve }\we \CC^{\underline{p}}$ are weak equivalences. Therefore, $N(\CC, \we\CC)$ and $N(\CC^\circ, \we\CC^\circ)$ are Reedy equivalent. Thus, a Reedy fibrant replacement of $N(\CC, \we\CC)$ is a complete Segal space.  
\end{proof}

\begin{thm}Every fibration category is fibrant in the Barwick--Kan model structure.
\end{thm}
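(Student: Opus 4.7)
The plan is to show that $N_\xi\MM$ is fibrant in Rezk's model structure on $\ssSet$. Since the Barwick--Kan model structure on $\RelCat$ is lifted from the Rezk model structure along $N_\xi$, this immediately yields Barwick--Kan fibrancy of $\MM$. Rezk fibrancy is the conjunction of Reedy fibrancy and the complete Segal space condition, so it suffices to verify these two properties separately.

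Reedy fibrancy of $N_\xi\MM$ is exactly the content of Theorem \ref{TheoremFibration}, already established in this section. For the complete Segal space property, I would apply Theorem \ref{Thm:LMGC} with $\CC=\MM$, viewed as a homotopically full subcategory of itself: this tells us that any Reedy fibrant replacement of the classifying diagram $N\MM$ is a complete Segal space. By \cite{B-K12R}, the natural transformation $\xi\to\id$ induces a Reedy weak equivalence $N\MM\to N_\xi\MM$, and since $N_\xi\MM$ is already Reedy fibrant by Theorem \ref{TheoremFibration}, this map realizes $N_\xi\MM$ as a Reedy fibrant replacement of $N\MM$. Because the complete Segal space property is invariant under Reedy weak equivalences between Reedy fibrant objects (it is characterized by weak equivalences of derived mapping spaces out of a fixed set of maps), $N_\xi\MM$ is itself a complete Segal space. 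Therefore $N_\xi\MM$ is Rezk fibrant and $\MM$ is Barwick--Kan fibrant.

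All of the substantive input has been developed upstream: Theorem \ref{TheoremFibration}, proved in this section via the fibrancy criterion Proposition \ref{FibCrit} and the homotopy-limit computations culminating in Proposition \ref{HolimProp}, together with Theorem \ref{Thm:LMGC}, which aggregates work of Low, Mazel-Gee, and Cisinski. The final deduction is purely formal, so there is no real obstacle at this stage. The same two-step argument applies verbatim to any homotopically full subcategory $\CC\subseteq\MM$ of a fibration category---Theorem \ref{Thm:LMGC} is already formulated in that generality, and the Reedy fibrancy proof of Theorem \ref{TheoremFibration} adapts because homotopical fullness places the homotopy limits produced by Proposition \ref{ExtensionProp} and Proposition \ref{HolimProp} back inside $\CC$---yielding the stronger Main Theorem announced in the introduction.
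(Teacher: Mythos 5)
Your proof is correct and follows essentially the same route as the paper: both arguments combine Theorem \ref{TheoremFibration} (Reedy fibrancy of $N_\xi\MM$) with Theorem \ref{Thm:LMGC} and the Reedy weak equivalence $N\MM\to N_\xi\MM$ from \cite{B-K12R} to conclude that $N_\xi\MM$ is a complete Segal space, then read off Barwick--Kan fibrancy from the definition of the lifted model structure. The only cosmetic difference is that you invoke an extra invariance step for the complete Segal space condition, whereas the paper goes straight from ``$N_\xi\MM$ is a Reedy fibrant replacement of $N\MM$'' to ``$N_\xi\MM$ is a complete Segal space'' via Theorem \ref{Thm:LMGC}; this is shorter but logically equivalent.
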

\begin{proof}Let $\MM$ be a fibration category. The natural map $N\MM \to N_\xi \MM$ is a Reedy equivalence as shown in \cite{B-K12R}. By the Theorem \ref{TheoremFibration}, it follows that $N_\xi\MM$ is a Reedy fibrant replacement of $N\MM$ and therefore fibrant in the Rezk model structure by the last theorem. As fibrations in $\RelCat$ are defined via $N_\xi$, it follows that $\MM$ is fibrant in $\RelCat$. 
\end{proof}

A slight variant of the proof gives actually the following stronger theorem:

\begin{thm}Every homotopically full subcategory of a fibration category is fibrant in the Barwick--Kan model structure.\end{thm}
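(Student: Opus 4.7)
The plan is to mirror the proof of the previous theorem essentially verbatim, with one extra observation to ensure the extensions stay inside $\CC$. Let $\CC \subseteq \MM$ be a homotopically full subcategory of the fibration category $\MM$. Since fibrations in $\RelCat$ are detected by $N_\xi$, and since by \cite{B-K12R} the canonical map $N\CC \to N_\xi\CC$ is a Reedy equivalence, it suffices to establish that $N_\xi\CC$ is itself Reedy fibrant: Theorem \ref{Thm:LMGC} (which is already phrased for homotopically full subcategories) will then identify $N_\xi\CC$ with a Reedy fibrant replacement of $N\CC$, hence with a complete Segal space, and fibrancy in the Barwick--Kan model structure will follow.

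For the Reedy fibrancy, I will verify the fibrancy criterion of Proposition \ref{FibCrit}. Fix $n \geq 1$, $0 \le k \le n$, set $\DD = c\Sd^2\Lambda^k[n]$ with the induced relative structure, and take an arbitrary relative functor $F\colon \DD \to \CC$. Composing with the inclusion $\CC \hookrightarrow \MM$, Proposition \ref{ExtensionProp} produces an extension $G\colon \KK(\DD) \to \MM$ in which each $G((d,0) \to (d,1))$ is a weak equivalence and $G|_{\DD^{\vartriangleleft}}$ is a homotopy limit diagram; Proposition \ref{HolimProp} ensures, exactly as in the proof of Theorem \ref{TheoremFibration}, that $G$ is a relative functor with respect to the relative structure on $\KK(\DD)$ coming from $\xi\un$.

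The only new point to verify is that $G$ actually factors through the subcategory $\CC$. On $\DD \times 0$ the functor $G$ coincides with $F$, so this part is already in $\CC$; each $G(d,1)$ is linked to $F(d) \in \CC$ by the weak equivalence $G((d,0) \to (d,1))$; and $G(k_\DD) = \holim_\DD F$ is linked to $F(0) \in \CC$ by the weak equivalence of Proposition \ref{HolimProp}. Homotopical fullness of $\CC$ forces all of these auxiliary objects to lie in $\CC$, and fullness of $\CC$ as a subcategory then forces all morphisms of $G$ to lie in $\CC$ as well. Consequently $G$ descends to the required extension $\KK(\DD) \to \CC$, the fibrancy criterion applies, and $N_\xi\CC$ is Reedy fibrant. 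The only mild obstacle in the whole argument is this little piece of book-keeping --- namely, confirming that Proposition \ref{ExtensionProp} and the homotopy limit at $k_\DD$ never leave $\CC$ --- and homotopical fullness is precisely the hypothesis that rules it out.
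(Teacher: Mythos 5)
Your proof is correct and follows essentially the same route as the paper's: reduce via Theorem~\ref{Thm:LMGC} to showing that $N_\xi\CC$ is Reedy fibrant, verify the criterion of Proposition~\ref{FibCrit} by composing $F$ with the inclusion $\CC\hookrightarrow\MM$ and extending over $\KK(\DD)$ as in Theorem~\ref{TheoremFibration}, and then observe that homotopical fullness (together with fullness) forces the extension to factor through $\CC$ since $G(d,1)\simeq F(d)$ and $G(k_\DD)\simeq F(0)$. The only difference is cosmetic: you spell out the invocation of Propositions~\ref{ExtensionProp} and~\ref{HolimProp} and the book-keeping a bit more explicitly than the paper does.
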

\begin{proof}Let $\CC$ be a homotopically full subcategory of a fibration category $\MM$. By \ref{Thm:LMGC}, we only have to show that $N_\xi\CC$ is Reedy fibrant. 

For Theorem \ref{TheoremFibration}, we have checked the fibrancy criterion \ref{FibCrit}. More precisely, we have shown that  $\MM$ has the right lifting property with respect to all
\[c\Sd^2\Lambda^k[n] \to \KK(c\Sd^2\Lambda^k[n]),\]
where the relative structure on $\KK(c\Sd^2\Lambda^k[n])$ is induced by an arbitrary relative structure on $\underline{n}$ such that $(n-1)\to n$ is a weak equivalence if $k=n$.

We now want to check \ref{FibCrit} also for $\CC$. Choose a relative structure on $\un$ as above. Let $F\colon c\Sd^2\Lambda^k[n] \to \CC$ be a functor and $G\colon \KK(c\Sd^2\Lambda^k[n]) \to \MM$ be an extension. Let $x\in c\Sd^2\Lambda^k[n]$ be arbitrary. As $G(x,1) \simeq G(x,0) = F(x)$ and $G(k_{c\Sd^2\Lambda^k[n]}) \simeq G(\{0\}, 1)$, the functor $G$ actually factors over $\CC$. \end{proof}

We now want to indicate, what happens if one considers cofibration categories instead of fibration categories. Define $\overline{\xi} = \xi_i\xi_t$. Then there is a functor 
\[N_{\overline{\xi}}\colon \RelCat \to \ssSet,\]
where $N_{\overline{\xi}}(\CC) = \RelCat(\overline{\xi}(\underline{\check{p}}\times\underline{\hat{q}}),\CC)$ for a relative category $\CC$. Barwick and Kan define in \cite{B-K12R} a \emph{conjugate model structure} on $\RelCat$, where a morphism $f$ is a fibration or weak equivalences if and only if $N_{\overline{\xi}}(f)$ is in the Rezk model structure.

For our purposes, a cofibration category consists of relative category $(\MM, \we\MM)$ together with a subcategory $\CC\subseteq \MM$ such that $(\MM^{op}, \we\MM^{op},\CC^{op})$ is a fibration category. By \cite[Theorem 6.4]{B-K12R} a relative category $\MM$ is fibrant in the conjugate model structure if and only $\MM^{op}$ is fibrant in the usual Barwick--Kan model structure. We obtain:
\begin{cor}
Every cofibration category is fibrant in the conjugate Barwick--Kan model structure.
\end{cor}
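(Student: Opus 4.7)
The plan is to reduce this corollary directly to the main theorem using the two pieces of input the author has already assembled: the definition of a cofibration category in terms of the opposite category and the duality result of Barwick--Kan (their Theorem 6.4) that relates fibrancy in the two model structures via passage to the opposite.

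Concretely, let $\MM$ be a cofibration category with cofibrations $\CC\subseteq \MM$. By the definition adopted just above the corollary, this is tantamount to saying that $(\MM^{op},\we\MM^{op},\CC^{op})$ is a fibration category. I would first invoke the main theorem proved in the previous section: every fibration category is fibrant in the Barwick--Kan model structure on $\RelCat$. Applied to $\MM^{op}$, this shows that the underlying relative category $(\MM^{op},\we\MM^{op})$ is fibrant in the Barwick--Kan model structure.

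Next, I would quote the cited \cite[Theorem 6.4]{B-K12R}, which says exactly that a relative category $\NN$ is fibrant in the conjugate Barwick--Kan model structure if and only if $\NN^{op}$ is fibrant in the ordinary Barwick--Kan model structure. Applying this criterion with $\NN = \MM$, the previous paragraph gives the result. No further work is needed, since the entire content has been packaged into the main theorem and the Barwick--Kan duality.

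There is essentially no obstacle here, beyond checking that the definition of cofibration category we have been using is compatible with the one implicit in \cite{B-K12R} so that their Theorem 6.4 applies cleanly; this is automatic because the definition we adopt is literally "a relative category whose opposite is a fibration category". In particular, the proof should occupy only a couple of lines, just stringing together the main theorem and the opposite-category characterization of fibrancy in the conjugate structure.
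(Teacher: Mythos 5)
Your proposal is correct and is exactly the argument the paper intends: combine the definition of cofibration category (its opposite is a fibration category) with the main theorem applied to $\MM^{op}$ and the duality statement from \cite[Theorem 6.4]{B-K12R}. The paper leaves the proof implicit after stating these two ingredients, so your two-line assembly matches it precisely.
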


\section{General Fibrancy Criteria}\label{GFSection}
In this section, we will give criteria for the Reedy fibrancy of $N_\xi\MM$, where $\MM$ will be throughout an arbitrary relative category. This will culminate in Proposition \ref{FibrancyCriteriumProp}, which is the relevant criterion for Section \ref{MCFSection}.

To use the notion of Reedy fibrancy, we have to view simplicial spaces now no longer as bisimplicial sets, but as simplicial objects in simplicial sets instead; more precisely, we view a bisimplicial set $K_{\bullet \bullet}$ now as a simplicial object $K_\bullet$ with $K_m = K_{m\bullet}$. 
  
There are two ways to view a simplicial set as a simplicial space, a horizontal and a vertical one. For a simplicial set $K$, let $K^h$ be the simplicial space with $(K^h)_m = K$ for all $m$. Furthermore, let $K^v$ be the simplicial space with $(K^v)_m = K_m$, where we view a set as a discrete simplicial set. Note that $\Delta[n]^h\times \Delta[m]^v \cong \Delta[m,n]$; in particular, \[\Hom_{\ssSet}(\Delta[n]^h\times \Delta[m]^v, X_{\bullet\bullet}) = X_{mn}.\]

\begin{lemma}\label{LiftingLemma1}The simplicial space $N_\xi\MM$ is Reedy fibrant if and only if $\MM$ has the right lifting property in $\RelCat$ with respect to 
\[c\Sd^2(\Lambda^k[n]\times \Delta[m]\cup_{\Lambda^k[n]\times \partial \Delta[m]} \Delta[n]\times \partial \Delta[m]) \to c\Sd^2(\Delta[n]\times \Delta [m]) ,\]
for all $n \geq 1$, $m\geq 0$ and $0\leq k \leq n$, where the target inherits its relative structure by the identification with $\xi(\widehat{\underline{n}}\times \check{\underline{m}})$ and the relative structure on the source is induced by that on the target.\end{lemma}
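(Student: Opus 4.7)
The plan is to rewrite Reedy fibrancy of $N_\xi\MM$ as a right lifting property in $\ssSet$, transfer it to $\RelCat$ via the adjunction $K_\xi \dashv N_\xi$, and then identify the resulting maps with those appearing in the statement.

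First, I would recall the standard reformulation of Reedy fibrancy: viewing $N_\xi\MM$ as a simplicial object in $\sSet$ (equipped with the Kan model structure in the inner direction), it is Reedy fibrant iff each matching map $(N_\xi\MM)_m \to M_m(N_\xi\MM)$ is a Kan fibration, which by the pushout-product argument is equivalent to $N_\xi\MM$ having the right lifting property against the pushout-products
\[\bigl((\partial\Delta[m])^v \times (\Delta[n])^h\bigr) \cup_{(\partial\Delta[m])^v \times (\Lambda^k[n])^h} \bigl((\Delta[m])^v \times (\Lambda^k[n])^h\bigr) \to (\Delta[m])^v \times (\Delta[n])^h\]
for all $m \geq 0$, $n \geq 1$, $0 \leq k \leq n$. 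By the adjunction $K_\xi \dashv N_\xi$, this is in turn equivalent to $\MM$ having the right lifting property in $\RelCat$ against the images of these inclusions under $K_\xi$.

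Second, I would identify these $K_\xi$-images with the maps named in the statement. The identification $(\Delta[m])^v \times (\Delta[n])^h \cong \Delta[m,n]$ gives the target as $K_\xi\Delta[m,n] = \xi(\check{\underline{m}} \times \hat{\underline{n}}) = \xi(\hat{\underline{n}} \times \check{\underline{m}})$, matching the lemma. For the underlying categories, combining colimit-preservation of $K_\xi$, $\diag$, and the forgetful functor $u \colon \RelCat \to \Cat$ with the commutative square $uK_\xi \cong c\Sd^2\,\diag$ from the introduction shows that the underlying category of $K_\xi$ of the source is $c\Sd^2$ applied to the diagonal of the source bisimplicial set, namely $c\Sd^2\bigl((\Lambda^k[n] \times \Delta[m]) \cup_{\Lambda^k[n] \times \partial\Delta[m]} (\Delta[n] \times \partial\Delta[m])\bigr)$.

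The main delicate point I expect is to verify that the relative structure on $K_\xi$ of the pushout-product source --- \emph{a priori} defined by the colimit formula for the left adjoint --- coincides with the structure induced from the inclusion into the target $\xi(\hat{\underline{n}} \times \check{\underline{m}})$; otherwise the lifting condition produced by the adjunction could strictly differ from the one in the statement. I would handle this combinatorially by unwinding $K_\xi$ as a colimit over non-degenerate simplices of the source: each such simplex contributes a sub-relative-category of the target with inherited weak equivalences, and one checks directly from the explicit description of weak equivalences in $\xi$ of a relative poset that the colimit of these sub-relative-categories already realizes every weak equivalence of the ambient $\xi(\hat{\underline{n}} \times \check{\underline{m}})$ whose underlying morphism lies in the subcategory. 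With this identification settled, the iff is then an immediate consequence of the adjunction.
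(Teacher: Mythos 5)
Your proposal follows essentially the same route as the paper: reformulate Reedy fibrancy of $N_\xi\MM$ as a lifting property against the pushout-product inclusions $(\partial\Delta[m])^v\times(\Delta[n])^h \cup (\Delta[m])^v\times(\Lambda^k[n])^h \hookrightarrow (\Delta[m])^v\times(\Delta[n])^h$ in $\ssSet$, transfer across the adjunction $K_\xi\dashv N_\xi$, and identify the resulting inclusion in $\RelCat$ using $uK_\xi\cong c\Sd^2\diag$ and $K_\xi\Delta[m,n]=\xi(\hat{\underline{n}}\times\check{\underline{m}})$. The one point you flag --- that the relative structure $K_\xi$ assigns to the pushout source (via the colimit formula) agrees with the subspace structure from $\xi(\hat{\underline{n}}\times\check{\underline{m}})$ --- is indeed left implicit in the paper, and your proposed combinatorial check is a reasonable way to make it explicit.
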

\begin{proof}
 The simplicial space $N_\xi \MM$ is Reedy fibrant iff a lift exists in all diagrams
\[\xymatrix{\Lambda^k[n]\ar[d]\ar[r] & (N_\xi\MM)_m\ar[d] \\
 \Delta[n] \ar[r]\ar@{-->}[ur] & M_m(N_\xi\MM) }
\]
for all $n\geq 1$, $m\geq 0$ and $0\leq k\leq n$. Here, $M_m$ denotes the matching object.

We have 
\[(N_\xi \MM)_m = \Map(\Delta[m]^v, N_\xi\MM)\]
 and 
\[M_mN_\xi\MM = \Map(\partial \Delta[m]^v, N_\xi\MM)\]
 with the map between them induced by the inclusion of the source (see \cite[Chapter IV.3, pp.218-219]{G-J99}). In general, for simplicial spaces $X$ and $Y$, we take as mapping space $\Map(X,Y)$ the simplicial set with $l$-simplices $\Hom(X\times\Delta[l]^h, Y)$. 

By adjunction, a lift in the diagram above is now equivalent to a lift in the diagram
\[\xymatrix{\Lambda^k[n]^h\times \Delta[m]^v \cup_{\Lambda^k[n]^h\times \partial \Delta[m]^v} \Delta[n]^h\times \partial \Delta[m]^v\ar[d]\ar[r] & N_\xi\MM \\
\Delta[n]^h\times \Delta[m]^v\ar@{-->}[ur] 
}\]
and by another adjunction equivalent to a lift in the diagram
\[\xymatrix{K_\xi(\Lambda^k[n]^h\times \Delta[m]^v \cup_{\Lambda^k[n]^h\times \partial \Delta[m]^v} \Delta[n]^h\times \partial \Delta[m]^v\ar[d]\ar[r]) &\MM \\
K_\xi(\Delta[n]^h\times \Delta[m]^v)\ar@{-->}[ur] 
}\]
As explained in Section \ref{BKSection}, this lifting problem is isomorphic to 
\[\xymatrix{c\Sd^2(\Lambda^k[n]\times \Delta[m] \cup_{\Lambda^k[n]\times \partial \Delta[m]} \Delta[n]\times \partial \Delta[m]\ar[d]\ar[r]) &\MM \\
\xi(\widehat{\underline{n}}\times \check{\underline{m}})\ar@{-->}[ur] 
}\]
where the upper left corner inherits a relative structure from $\xi(\widehat{\underline{n}}\times \check{\underline{m}})$. 
\end{proof}

\begin{lemma}\label{LiftingLemma2}The simplicial space $N_\xi\MM$ is Reedy fibrant if $\MM$ has the right lifting property with respect to all
\[c\Sd^2\Lambda^k[n] \to \xi\un,\]
with $0\leq k\leq n$ and $n\geq 1$, where $\un$ carries an arbitrary relative structure satisfying the following conditions:
\begin{itemize}
\item $\un$ has at least one non-identity weak equivalence, 
\item $0\to 1$ in $\un$ is a weak equivalence if $k=0$,
\item  $n-1 \to n$ in $\un$ is a weak equivalence if $k=n$.
\end{itemize}  Here, $c\Sd^2\Lambda^k[n]$ inherits the relative structure from $\xi\underline{n}$. \end{lemma}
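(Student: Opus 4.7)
The plan is to start from the criterion given by Lemma~\ref{LiftingLemma1} and reduce each prism-type lifting problem to a sequence of simplex-type lifting problems by attaching the top-dimensional shuffles one at a time. By Lemma~\ref{LiftingLemma1}, it is enough to show that $\MM$ has the right lifting property against
\[c\Sd^2\bigl(\Lambda^k[n]\times \Delta[m] \cup_{\Lambda^k[n]\times \partial \Delta[m]} \Delta[n]\times \partial \Delta[m]\bigr) \hookrightarrow c\Sd^2(\Delta[n]\times\Delta[m]) = \xi(\widehat{\un}\times\check{\underline{m}})\]
for all $n\geq 1$, $m\geq 0$, $0\leq k\leq n$. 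When $m=0$ the target is $\xi\widehat{\un}$; since $\widehat{\un}$ has every non-identity morphism as a weak equivalence, the three bulleted conditions of the statement are trivially satisfied, so this case reduces directly to the hypothesis.

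For $m\geq 1$, I would invoke the classical shuffle decomposition of the product of simplices. The non-degenerate $(n+m)$-simplices of $\Delta[n]\times\Delta[m]$ are in bijection with lattice paths from $(0,0)$ to $(n,m)$ using unit horizontal and vertical steps. One can order these shuffles $\sigma_1,\dots,\sigma_N$ so that the inclusion
\[\Lambda^k[n]\times \Delta[m] \cup_{\Lambda^k[n]\times \partial \Delta[m]} \Delta[n]\times \partial \Delta[m] \hookrightarrow \Delta[n]\times\Delta[m]\]
is a finite composition of pushouts of horn inclusions $\Lambda^{j_i}[n+m] \hookrightarrow \Delta[n+m]$, one for each shuffle $\sigma_i$; this is standard from Joyal's analysis of the pushout-product. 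Since $c\Sd^2 = c \circ \Sd^2$ is a composition of left adjoints and hence preserves colimits, applying it yields a filtration of the map in question by pushouts of $c\Sd^2\Lambda^{j_i}[n+m] \hookrightarrow c\Sd^2\Delta[n+m] = \xi(\underline{n+m})$, where $\underline{n+m}$ inherits its relative structure from $\widehat{\un}\times\check{\underline{m}}$ along the path $\sigma_i\colon\underline{n+m}\to\widehat{\un}\times\check{\underline{m}}$.

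It therefore suffices to check that each of these induced relative structures on $\underline{n+m}$ fits the hypothesis of the lemma with $j_i$ playing the role of $k$. A step in $\sigma_i$ is a weak equivalence in $\widehat{\un}\times\check{\underline{m}}$ precisely when it is horizontal (keeps the second coordinate fixed), so because $n\geq 1$ every shuffle contains at least one horizontal step, guaranteeing the existence of a non-identity weak equivalence in $\underline{n+m}$. The main obstacle is the bookkeeping for the two outer-horn cases: one must verify that in the shuffle decomposition, the horn index $j_i=0$ occurs only when $k=0$ and moreover only for those shuffles $\sigma_i$ whose first step is horizontal, and symmetrically that $j_i=n+m$ occurs only when $k=n$ and only for shuffles whose last step is horizontal. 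Both statements can be extracted from the description of which face of $\sigma_i$ is the one not already present in the preceding stage of the filtration, and the symmetry of the two cases reduces the verification essentially to a single computation. Granting this combinatorial tracking, each pushout we must lift along has precisely the form supplied by the hypothesis, so the lift exists, and $N_\xi\MM$ is Reedy fibrant.
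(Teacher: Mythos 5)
The central combinatorial claim in your proposal is false, and it collapses the argument. You assert that
\[
\Lambda^k[n]\times \Delta[m] \cup_{\Lambda^k[n]\times \partial \Delta[m]} \Delta[n]\times \partial \Delta[m] \hookrightarrow \Delta[n]\times\Delta[m]
\]
can be written as a finite composition of pushouts of horn inclusions $\Lambda^{j_i}[n+m]\hookrightarrow\Delta[n+m]$, one horn of \emph{top} dimension $n+m$ per shuffle. But a pushout along $\Lambda^{j}[n+m]\hookrightarrow\Delta[n+m]$ adjoins exactly one new non-degenerate $(n+m)$-simplex and one new non-degenerate $(n+m-1)$-simplex (the free face of the horn) and nothing of lower dimension. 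Meanwhile, the complement of the source in $\Delta[n]\times\Delta[m]$ contains non-degenerate simplices of strictly lower dimension whenever $n,m\geq 2$. For instance, with $n=m=2$ the non-degenerate $2$-simplex $(0,0)<(1,1)<(2,2)$ satisfies $\{a_i\}=\{0,1,2\}$ and $\{u_i\}=\{0,1,2\}$, so it lies in no face of either factor, hence is absent from the source; but it has dimension $n+m-2=2$ and would never be created by attaching $4$-dimensional horns. The standard shuffle filtration you are thinking of builds $\Delta[n]\times\Delta[m]$ from $\partial\Delta[n]\times\Delta[m]\cup\Delta[n]\times\partial\Delta[m]$ by attaching along $\partial\Delta[n+m]$'s (each shuffle with its full boundary already present), and the usual anodyne-extension results about pushout-products decompose these maps into horn fillings of \emph{many} dimensions, not only the top one.

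The paper instead produces a two-stage filtration: an outer filtration $Y^0\subset Y^1\subset\cdots\subset Y^{m+1}$ indexed by the vertex $\begin{bmatrix}k\\m-i+1\end{bmatrix}$ contained, and for each stage an inner filtration $Y^i[n-1]\subset\cdots\subset Y^i[n+r]$ indexed by dimension, so that each step is a cobase change along horns $\Lambda^0[t+1]\hookrightarrow\Delta[t+1]$ with $t$ ranging from $n$ up to $n+m$. It moreover runs the whole argument in marked simplicial sets so that the only outer horns that appear are the special left (resp.\ right) horns, which is exactly what controls the hypotheses on the induced relative structure of $\underline{n+m}$. Your proposal defers the analogous bookkeeping (``granting this combinatorial tracking''), but since the claimed top-dimensional filtration does not exist, the analysis you would have to do is essentially the one in the paper, carried out for horns of all intermediate dimensions. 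So this is a genuine gap rather than a different route to the same conclusion.
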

\begin{proof}
Denote by $I$ the collection of inclusions $c\Sd^2\Lambda^k[n] \to \xi\underline{n}$ of relative categories as described in the statement of the lemma. We want to show that 
\[c\Sd^2(\Lambda^k[n]\times \Delta[m]\cup_{\Lambda^k[n]\times \partial \Delta[m]} \Delta[n]\times \partial \Delta[m]) \to c\Sd^2(\Delta[n]\times \Delta [m]) = \xi(\widehat{\underline{n}}\times \check{\underline{m}})\]
is in $I$-cell, which means in our case that it can be build by iterative pushouts along maps in $I$. Then Lemma \ref{LiftingLemma1} implies that $\MM$ having the right lifting property with respect to $I$ is sufficient for the Reedy fibrancy of $N_\xi\MM$ . 

Recall that a marked simplicial set is a simplicial set $S$ with a subset $E\subset S_1$ of marked edges, containing all degenerate ones. Call an inclusion of marked simplicial sets with underlying map $\Lambda^k[n] \to \Delta[n]$ an
\begin{itemize}
\item \textit{inner horn} if $0<k<n$,
\item \textit{special left horn} if $k=0$ and $0\to 1$ is marked,
\item \textit{special right horn} if $k=n$ and $(n-1) \to n$ is marked.
\end{itemize}
We denote by $J = J_l$ the collection of inner and special left horns and by $J_r$ the collection of inner and special right horns. 

Nerve and $c$ extend to an adjunction between relative categories and marked simplicial sets, compatible with forgetful functors. We define $\Sd^2$ on marked simplicial sets to be the unique colimit-preserving endofunctor such that $\Sd^2\mathrm{Nerve}\, \CC = \mathrm{Nerve}\, \xi \CC$ for $\CC$ a relative poset. 

As $c$ and $\Sd^2$ are left adjoints and preserve therefore pushouts, it is enough to show that the inclusion
\[\phi = \phi_{k,n,m}: \Lambda^k[n]\times \Delta[m]\cup_{\Lambda^k[n]\times \partial \Delta[m]} \Delta[n]\times \partial \Delta[m] \to \Delta[n]\times \Delta [m]\]
with $\Delta[n]$ maximally and $\Delta[m]$ minimally marked, is in $J$-cell for $k<n$ and in $J_r$-cell for $k>0$. Here, an edge is marked in the product if it is the product of two marked edges. 

We will use the idea of the Box Product Lemma of \cite[Appendix A]{D-S11}. Their proof essentially gives that for $k>0$ the map $\phi$ is in $J_r$-cell. Therefore, we will only do the case $k<n$. Our proof will be dual to that of \cite{D-S11} and we will follow their approach closely. 

Let $Y = \Delta[n]\times\Delta[m]$ and let $Y^0 = \Lambda^k[n]\times \Delta[m]\cup_{\Lambda^k[n]\times \partial \Delta[m]} \Delta[n]\times \partial \Delta[m]$. We will produce a filtration 
\[Y^0 \subset Y^1 \subset \cdots \subset Y^{m+1} = Y\]
 and prove that each $Y^i \to Y^{i+1}$ is in $J$-cell.

Let us establish some notation. An $r$-simplex $y$ in $Y$ is determined by its
vertices, and we can denote it in the form
\[\begin{bmatrix}a_0 & a_1 & \cdots & a_r \\
   u_0 & u_1 & \cdots & u_r 
  \end{bmatrix}
\]
where $0 \leq a_i \leq a_{i+1} \leq n$ and $0 \leq u_i \leq u_{i+1} \leq m$, for $0 \leq i < r$. Faces and
degeneracies are obtained by omitting or repeating columns. 
The simplex $y$
is thus degenerate if and only if two successive columns are identical. 

One checks that the simplex $y$ is an element of $Y^0$ if and only if it satisfies one
of the following two conditions:
\begin{itemize}
 \item[(i)] $\{a_0, a_1, \dots, a_r\}$ equals neither $\{0, 1, \dots, n\}$ nor $\{0, 1, \dots, n\} \setms \{k\}$, or
 \item[(ii)]$\{u_0 , u_1\dots , u_r\} \neq \{0, 1, \dots , m\}$.
\end{itemize}

Let $Y^1$ be the simplicial set generated by the union of $Y^0$ together with all simplices that contain the vertex $\begin{bmatrix}k\\m\end{bmatrix}$,
 and in general let $Y^i$ be the simplicial set generated by the union of $Y^{i-1}$ together with all simplices containing $\begin{bmatrix}k\\m-i+1\end{bmatrix}$. Note that $Y^{m+1} = Y$: Every simplex either contains some $\begin{bmatrix}k\\m-i+1\end{bmatrix}$ or is a face of such a simplex. 

Our goal is to show that each inclusion $Y^i\to Y^{i+1}$ is in $J$-cell, and we
will do this by producing another filtration
\[Y^i = Y^i[n - 1] \subset Y^i[n] \subset \cdots \subset Y^i[n + r] = Y^{i+1}.\]
Notice that every simplex of $Y$ of dimension $n - 1$ or less, containing $\begin{bmatrix}k\\m-i\end{bmatrix}$, lies in $Y^0$
as it satisfies condition (i). For $t > n - 1$ we define $Y^i[t]$ to be generated by the union of $Y^i[t - 1]$
and all nondegenerate simplices of $Y$ that have dimension $t$ and contain $\begin{bmatrix}k\\m-i\end{bmatrix}$. We
claim that $Y^i[t] \to Y^i[t + 1]$ is a cobase change of special left horn
inclusions; justifying this will conclude our proof.

Let $y$ be a nondegenerate simplex of $Y$ of dimension $t + 1 \geq n$ such that $y\in Y^i[t+1]$, but $y \notin Y^i[t]$; in particular, $y$ contains $\begin{bmatrix}k\\m-i\end{bmatrix}$. Then every face of $y$ except possibly for the $\begin{bmatrix}k\\m-i\end{bmatrix}$-face is contained in $Y^i[t]$. We must show that $Y^i[t]$ cannot contain this final
face of $y$, and also that this final face is not the face of another nondegenerate simplex in $Y^i[t+1]$. Given the former, the latter is clear since two different simplices cannot have the same $\begin{bmatrix}k\\m-i\end{bmatrix}$-face.

Now we want to show that the $\begin{bmatrix}k\\m-i\end{bmatrix}$-face $dy$ of $y$ is not in $Y^i[t]$. Write $y$ as 
\[\begin{bmatrix}a_0 & a_1 & \cdots & a_{t+1} \\
   u_0 & u_1 & \cdots & u_{t+1}. 
  \end{bmatrix}
\]
The column $\begin{bmatrix}k\\m-i\end{bmatrix}$ cannot be the last one in $y$ since $\{a_0, a_1, \dots, a_{t+1}\} = \{0, 1, \dots, n\}$ and $k<n$. Consider the column in $y$ after $\begin{bmatrix}k\\m-i\end{bmatrix}$. The difference between it and $\begin{bmatrix}k\\m-i\end{bmatrix}$ can neither in first nor in second entry exceed one as else $y\in Y^0$. The column cannot equal $\begin{bmatrix}k\\m-i+1\end{bmatrix}$ or $\begin{bmatrix}k\\m-i\end{bmatrix}$ since then $y \in Y^i$ or $y$ degenerate. Thus it has to equal $\begin{bmatrix}k+1\\?\end{bmatrix}$. The second entry cannot be $m-i+1$ since then we could insert between these two columns an entry $\begin{bmatrix}k\\m-i+1\end{bmatrix}$ so that $y$ would be a face of a simplex in $Y^i$, so would itself be in $Y^i$. Therefore, this column has to be $\begin{bmatrix}k+1\\m-i\end{bmatrix}$.

The set of $u$'s in $dy$ equals that in $y$. Thus, $dy\notin Y^0$ as $y\notin Y^0$. Thus, $dy$ can only be in $Y^i[t]$ if it either contains $\begin{bmatrix}k\\m-j\end{bmatrix}$ for some $j\leq i$ or it is the $\begin{bmatrix}k\\m-j\end{bmatrix}$-face of another simplex with $j\leq i-1$. Both is absurd.
                                                                                                                                                                                                 
As $\begin{bmatrix}k\\m-i\end{bmatrix}$ was not the last column, we have indeed proved that the inclusion $Y^i[t] \to Y^i[t+1]$ is a cobase change along inner and left horns, one horn inclusion for each $y$. If $\begin{bmatrix}k\\m-i\end{bmatrix}$ was actually the zeroth column (in the case we are filling a $0$-horn), the edge from the zeroth vertex to the first vertex is marked as the second entries of both agree and the edge is therefore a product of a marked and a degenerate edge. Thus, $\phi$ is in $J$-cell. 
\end{proof}

For the next proposition, please recall the notation $\KK(\DD)$ from the beginning of Section \ref{MCFSection}.

\begin{prop}\label{FibrancyCriteriumProp}The simplicial space $N_\xi\MM$ is Reedy fibrant if $\MM$ has the right lifting property with respect to all
\[c\Sd^2\Lambda^k[n] \to \KK(c\Sd^2\Lambda^k[n])\]
with $0\leq k\leq n$ and $n\geq 1$, where the relative structures on $c\Sd^2\Lambda^k[n]$ and $\KK(c\Sd^2\Lambda^k[n])$ are induced\footnote{This is detailed before Proposition \ref{HolimProp}.} by an arbitrary relative structure on $\underline{n}$ satisfying the following conditions:
\begin{itemize}
\item $\un$ has at least one non-identity weak equivalence, 
\item $0\to 1$ in $\un$ is a weak equivalence if $k=0$,
\item  $n-1 \to n$ in $\un$ is a weak equivalence if $k=n$.
\end{itemize}
\end{prop}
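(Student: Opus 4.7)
The plan is to derive the proposition from Lemma \ref{LiftingLemma2}. By that lemma, it suffices to show that the hypothesis of the present proposition implies that $\MM$ has the right lifting property with respect to every inclusion of the form $c\Sd^2\Lambda^k[n]\to \xi\underline{n}$ occurring there. Using the identification $\xi\underline{n}\cong \KK(c\Sd^2\partial\Delta[n])$ from the discussion preceding Proposition \ref{FibCrit}, together with the inclusion $c\Sd^2\Lambda^k[n]\subset c\Sd^2\partial\Delta[n]$, I factor such a map as
\[c\Sd^2\Lambda^k[n]\hookrightarrow \KK(c\Sd^2\Lambda^k[n])\hookrightarrow \xi\underline{n}.\]
The first inclusion is precisely one of the maps addressed by the hypothesis, so given any relative functor $F\colon c\Sd^2\Lambda^k[n]\to \MM$ we obtain an extension $G\colon \KK(c\Sd^2\Lambda^k[n])\to \MM$; the task is then to extend $G$ along the second inclusion.

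For this, I observe that $\KK$ preserves pushouts of small categories, which one checks directly (all three copies of $\KK$ in a pushout share a single partial initial object, and objects/morphisms are set-theoretically assembled from the three pieces). Applied to the decomposition $\partial\Delta[n]\cong \Lambda^k[n]\cup_{\partial d_k\Delta[n-1]} d_k\Delta[n-1]$, followed by the left adjoint $c\Sd^2$, this yields
\[\xi\underline{n}\cong \KK(c\Sd^2\Lambda^k[n])\cup_{\KK(c\Sd^2\partial\Delta[n-1])}\KK(c\Sd^2\Delta[n-1]).\]
Thus, by the universal property of pushouts, extending $G$ to all of $\xi\underline{n}$ is equivalent to extending its restriction to $\KK(c\Sd^2\partial\Delta[n-1])$ along the map $\KK(c\Sd^2\partial\Delta[n-1])\to \KK(c\Sd^2\Delta[n-1])$. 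This is now a strictly smaller problem in dimension $n-1$, and I would proceed by induction on $n$: each additional chain that must be filled can be attached via a pushout along a map of the form $c\Sd^2\Lambda^{k'}[n']\to \KK(c\Sd^2\Lambda^{k'}[n'])$ with $n'<n$ and a suitable choice of $k'$. The base of the induction (small $n$) is handled directly by the hypothesis.

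The hard part is the combinatorial bookkeeping at each inductive step: one must identify the "attaching horn" of each new chain as being of the form $c\Sd^2\Lambda^{k'}[n']$ and then verify that the relative structure it inherits from $\underline{n}$ meets the endpoint conditions appearing in the hypothesis of the proposition (non-triviality of the weak equivalences on $\underline{n'}$, and the conditions on $0\to 1$ and $(n'-1)\to n'$ at the ends). The assumption that $\underline{n}$ carries at least one non-identity weak equivalence is precisely what makes it possible to arrange the induced relative structures so that these endpoint conditions are always met. Once the combinatorial analysis is carried out, mirroring the style of the Box Product argument used in the proof of Lemma \ref{LiftingLemma2}, the induction goes through and the proof is complete.
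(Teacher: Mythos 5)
Your first reduction — invoking Lemma \ref{LiftingLemma2} and trying to extend a given $G\colon \KK(c\Sd^2\Lambda^k[n]) \to \MM$ to all of $\xi\underline{n}$ — agrees with the paper's reduction. But from that point your route diverges from, and falls short of, the paper's argument, which is a \emph{single explicit retraction}: the paper constructs an order-preserving, weak-equivalence-preserving retraction $r\colon \xi\underline{n} \to \KK(c\Sd^2\Lambda^k[n])$ (collapsing the face $\underline{n}\setminus\{k\}$ onto $\underline{n}$ in every chain), and then simply precomposes $G$ with $r$; no induction or pushout decomposition is needed, and the endpoint hypotheses on $0\to 1$ and $(n-1)\to n$ enter exactly when verifying that $r$ preserves weak equivalences.

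Your inductive decomposition has a genuine gap that does not look repairable without major rethinking. After peeling off $\KK(c\Sd^2\Lambda^k[n])$, what remains is the extension problem along $\KK(c\Sd^2\partial\Delta[n-1]) \to \KK(c\Sd^2\Delta[n-1])$, which is \emph{not} an instance of the hypothesis (a map $c\Sd^2\Lambda^{k'}[n'] \to \KK(c\Sd^2\Lambda^{k'}[n'])$); you would need a separate argument reducing it to such instances, and that argument is precisely where the content lies. More seriously, your claim that ``the assumption that $\underline{n}$ carries at least one non-identity weak equivalence is precisely what makes it possible to arrange the induced relative structures so that these endpoint conditions are always met'' is false. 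Take $n=2$, $k=1$, with $0\to 1$ a weak equivalence and $1\to 2$ (and hence, by a legitimate choice, also $0\to 2$) not a weak equivalence; this relative structure satisfies all three bullet conditions of the proposition. The face $d_1\Delta[1]$ you glue on has underlying poset $\{0<2\}$, and the inherited relative structure has \emph{no} non-identity weak equivalence at all, violating the first bullet and putting the extension problem outside the reach of the hypothesis. So the relative-structure bookkeeping you defer does not in fact work out, and the induction cannot close. (The secondary issue — whether $\KK$ preserves the relevant pushouts of posets — is plausible for these monomorphisms of finite posets but would also need a proof; it is not the decisive obstruction.)
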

\begin{proof}Fix $0\leq k\leq n$ and equip $\underline{n}$ with a relative structure with at least one weak equivalence such that the map $0\to 1$ in $\underline{n}$ is a weak equivalence if $k=0$ and $(n-1)\to n$ is a weak equivalence if $k=n$. Assume that $\MM$ has the right lifting property with respect to
\[c\Sd^2\Lambda^k[n] \to \KK(c\Sd^2\Lambda^k[n]).\]
By Lemma \ref{LiftingLemma2}, we only need to show that $\MM$ has then also the right lifting property with respect to
\[c\Sd^2\Lambda^k[n] \to \xi\underline{n},\]

We will proceed as in \cite[Lemma 4.2]{M-O14}, but we have to take extra care here since not all morphisms are weak equivalences.

Recall that $\KK(c\Sd^2 \Lambda^k[n])$ is isomorphic to a full subposet $\mathcal{P}_k$ of $c\Sd^2 \Delta[n]$, described as follows: The subposet $c\Sd^2\Lambda^k[n]$ of $c\Sd^2\Delta[n]$ consists of all those sequences $v_0\subsetneq \ldots \subsetneq v_{m}$ for which $v_m \neq \underline{n}$ and $v_m\neq \unn$. The subposet $\mathcal{P}_k$ of $c\Sd^2\Delta[n]$ consists of all sequences $v_0\subsetneq \ldots \subsetneq v_{m}$ in $c\Sd^2\Lambda^i[n]$, for each such sequence also the sequence $v_0\subsetneq \ldots \subsetneq v_{m}\subsetneq \underline{n}$, and finally the sequence consisting only of $\underline{n}$ (corresponding to $k_{c\Sd^2 \Lambda^k[n]}\in\KK(c\Sd^2 \Lambda^k[n]))$.
 
It is enough to show that each relative functor defined on $\mathcal{P}_k$ can be extended to $c\Sd^2 \Delta[n]$. We will give a retraction for the inclusion of $\mathcal{P}_k$ into $c\Sd^2\Delta[n]$, i.e.\ an order-preserving map $c\Sd^2\Delta[n] \to \mathcal{P}_k$, which is the identity on $\mathcal{P}_k$ and respects weak equivalences. This will complete the proof.

Observe that the only objects of $c\Sd^2 \Delta[n]$ which are not in $\mathcal{P}_k$ are given by sequences in which $\unn$ occurs; more precisely, these are the sequences $\unn$, $\unn \subsetneq \un$, and $w_0 \subsetneq \ldots \subsetneq w_{l} \subsetneq \unn $ and $w_0 \subsetneq \ldots \subsetneq w_{l} \subsetneq \unn \subsetneq \un$, where in the last two cases, $w_0 \subsetneq \ldots \subsetneq w_{l}$ is a sequence of non-empty subsets of $\unn$.

The map $r\colon c\Sd^2\Delta[n] \to \mathcal{P}_k$ is described as follows:
\begin{eqnarray*}
 A\mapsto \begin{cases}
           A, &\mbox{ if } A \in \mathcal{P}_k,\\
           \un,& \mbox{ if } A=\unn \mbox{ or } \unn \subsetneq \un, \\
           w_0 \subsetneq w_1\subsetneq \ldots \subsetneq w_l \subsetneq \un, &\mbox{ if } A=(w_0 \subsetneq w_1\subsetneq \ldots \subsetneq w_l \subsetneq \unn) \mbox{ or }\\
           &A=(w_0 \subsetneq w_1\subsetneq \ldots \subsetneq w_l \subsetneq \unn \subsetneq \un).
          \end{cases}
\end{eqnarray*}

Note that the assignment above covers all cases. Furthermore, the map takes only values in $\mathcal{P}_k$, is by definition identity on $\mathcal{P}_k$ and it is checked in \cite[Lemma 4.2]{M-O14} that it is order-preserving. We have only to show that it preserves weak equivalences. As described in Section \ref{BKSection}, weak equivalences are detected by the smallest element of the first set in the chain. This can only change by application of $r$ if $k=0$ and then it can change at most from $0$ to $1$. As the morphism $0\to 1$ is a weak equivalence if $k=0$, the retraction $r$ preserves weak equivalences. 

This completes the proof of the proposition. 
\end{proof}

\section{Contractible Subsets of Simplices}\label{CLSection}
The aim of this section is to prove Lemma \ref{Contractible}. Throughout this section, we use the notation of Section \ref{MCFSection}. This means that $n\geq 1$ is a fixed natural number and $0\leq k\leq n$. Furthermore, $\DD = c\Sd^2 \Lambda^k[n]$ and $\pi\colon \DD \to \un$ is the functor described in Section \ref{MCFSection}. Moreover, $V_+\CC \subset \DD$ denotes for a subcategory $\CC \subset \DD$ the full subcategory of all $d\in\DD$ such that a morphism $c\to d$ in $\DD$ with $c\in\CC$ exists. 

We will split up the statement of Lemma \ref{Contractible} into several lemmas. 

 \begin{lemma} Let $\EE \subset \un$ be a subcategory, not containing $\un \setminus \{k\}$ as connected component. Then the nerve of the category $\pi^{-1}(\EE)$ is weakly equivalent to the nerve of $\EE$. In particular, if $\EE$ is in addition non-empty and connected, the nerve of $\pi^{-1}(\EE)$ is contractible. 
\end{lemma}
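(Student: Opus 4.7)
The plan is to apply Quillen's Theorem A to the restriction $\pi\colon\pi^{-1}(\EE)\to\EE$ after reducing to the connected case, with a direct contractibility argument for single-object fibers and a separate analysis handling the maximal-element case where induction does not close.

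First I would reduce to $\EE$ connected: any morphism of $\pi^{-1}(\EE)$ lies above a morphism in $\EE$, so $\pi^{-1}(\EE)$ splits as a coproduct indexed by the connected components of $\EE$, each inheriting the hypothesis. For the ``in particular'' statement, a non-empty connected full subcategory of the linearly ordered poset $\un$ is itself linearly ordered and therefore has contractible nerve; thus it suffices to show $N\pi^{-1}(\EE)$ is contractible for such $\EE\neq\un\setminus\{k\}$.

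The base case $\EE=\{e\}$ is handled by a deformation retract. Define $T\colon\pi^{-1}(\{e\})\to\pi^{-1}(\{e\})$ by prepending $\{e\}$ when $A_0\neq\{e\}$: set $T(A_\bullet)=(\{e\}\subsetneq A_0\subsetneq\cdots\subsetneq A_m)$ when $A_0\supsetneq\{e\}$ and $T(A_\bullet)=A_\bullet$ otherwise. One checks $T$ is a functor, and the subchain inclusion $A_\bullet\to T(A_\bullet)$ supplies a natural transformation $\mathrm{id}\Rightarrow T$. The image of $T$ lies in $\mathcal{C}_0:=\{A_\bullet:A_0=\{e\}\}$, which has the singleton chain $(\{e\})$ as initial object and so contractible nerve. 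This uses $\{e\}\in\DD$, automatic for $n\geq 2$; the case $n=1$ is trivial.

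For general connected $\EE$ I would apply Quillen's Theorem A to $\pi$ and induct on $|\EE|$: the comma $\pi\downarrow e$ identifies with $\pi^{-1}(\EE_{\leq e})$, which is strictly smaller than $\EE$ when $e$ is non-maximal. The condition $\EE_{\leq e}\neq\un\setminus\{k\}$ needed to invoke the inductive hypothesis fails only in the degenerate case $\EE=\un$, $k=n$, $e=n-1$; here one concludes directly from the fact that $N\pi^{-1}(\un)=N\DD$ is the nerve of the double barycentric subdivision of the contractible horn $\Lambda^k[n]$. The main obstacle, which I anticipate as the most delicate step, is the maximal-element case $e=\max\EE$, where $\pi\downarrow e=\pi^{-1}(\EE)$ itself and induction does not reduce the problem; I would handle this by extending the base-case retract, mapping $A_\bullet$ to a chain in $\pi^{-1}(\min\EE)$ built from $\{\min\EE\}$ together with augmented sets of the form $\{\min\EE\}\cup A_j$ (with duplicates removed). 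The hypothesis $\EE\neq\un\setminus\{k\}$ is precisely what ensures this extension stays within $\DD$, avoiding the forbidden subsets $\un$ and $\un\setminus\{k\}$; verifying its functoriality is the technically most involved step.
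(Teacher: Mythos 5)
Your base case ($\EE = \{e\}$) is correct: the prepending operation $T$ is indeed a functor, the inclusions $A_\bullet \subseteq T(A_\bullet)$ (as sets of subsets) give a natural transformation $\mathrm{id}\Rightarrow T$, and the image $\CC_0$ has the singleton chain as initial object. The issue is the general connected case, and specifically the ``extended retract'' you lean on for the maximal element.

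The retract you describe does not stay inside $\DD$. Take $n=3$, $k=0$, so the forbidden top sets are $\underline{3}=\{0,1,2,3\}$ and $\underline{3}\setminus\{0\}=\{1,2,3\}$. Take $\EE=\{1,2\}$ (which is not $\underline{n}\setminus\{k\}$) and $A_\bullet=(\{2\}\subsetneq\{2,3\})$; then $\min A_0=2\in\EE$ and the top set $\{2,3\}$ is allowed, so $A_\bullet\in\pi^{-1}(\EE)$. Your $T$ sends this to $(\{1\}\subsetneq\{1,2\}\subsetneq\{1,2,3\})$, whose top set $\{1,2,3\}=\underline{n}\setminus\{k\}$ is forbidden. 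With $A_\bullet=(\{2\}\subsetneq\{2,3\}\subsetneq\{0,2,3\})$ the image instead has top set $\underline{n}$. The hypothesis $\EE\neq\underline{n}\setminus\{k\}$ constrains which fiber we are looking at, not the top sets of chains in that fiber, which can perfectly well be $\underline{n}\setminus\{e_0\}$ or $\underline{n}\setminus\{k,e_0\}$; taking unions with $\{e_0\}$ then produces exactly the forbidden sets. There is a second, independent problem: when $e_0=\min\EE<\min A_0$, the chains $A_\bullet$ and $T(A_\bullet)$ are incomparable in the refinement order (no $A_i$ contains $e_0$, so $A_i\notin T(A_\bullet)$ and $\{e_0\}\notin A_\bullet$), so no natural transformation connects $\mathrm{id}$ with $T$ in either direction. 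Finally, note a structural redundancy: applying Quillen's Theorem A to $\pi$ requires the comma over $\max\EE$, which is all of $\pi^{-1}(\EE)$, to already be contractible; if your retract established that, Theorem A and the induction would be superfluous, and since the retract fails, the whole argument collapses.

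The paper sidesteps all of this by retracting in the opposite direction, toward the bottom of the chain rather than pushing the top up. It applies Theorem A to the inclusion of the subposet $\CC=\{W_\bullet : W_0\subset\EE\}$ into $\pi^{-1}(\EE)$; the under-categories $W_\bullet/\CC$ are shown contractible via an adjunction with the poset of chains inside $W_0\cap\EE$, and these are always adjoined \emph{below} $W_\bullet$, so the top set $W_m$ never changes and membership in $\DD$ is automatic. A further right adjoint retracts $\CC$ onto the chains contained entirely in $\EE$, whose nerve is $\Sd^2\mathrm{Nerve}\,\EE$, and this is where the hypothesis $\EE\neq\underline{n}\setminus\{k\}$ is actually used.
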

\begin{proof}
We can assume that $\EE$ is non-empty and connected. We have to show that $\mathrm{Nerve}\,\pi^{-1}(\EE)$ is contractible. This is clear for $\EE = \un$, so that we can assume $\EE \neq \un$. Throughout this proof, we mean by $W_\bullet$ a non-empty chain $W_0\subsetneq \cdots \subsetneq W_l$ of non-empty subsets of $\un$ such that $W_l$ is neither $\un$ nor $\un\setminus \{k\}$. 

Let $\CC \subset \pi^{-1}(\EE)$ be the full subcategory of all those $W_\bullet$ with $W_0 \subset \EE$. We want to apply Quillen's Theorem A to show that the inclusion $\CC \to \pi^{-1}(\EE)$ induces a weak homotopy equivalence on nerves. We have to show that for every $W_\bullet \in \pi^{-1}(\EE)$, the nerve of the undercategory $W_\bullet/\CC$ is contractible. As $\pi^{-1}(\EE)$ is a poset, we can identify this undercategory with a subcategory of $\CC$. With this identification, there is an adjunction
\[\xymatrix{ \DD \ar@<0.5ex>[r]^-{\lambda} & W_\bullet/\CC \ar@<0.5ex>[l]^-{\rho}, }\]
where $\DD$ is the poset of non-empty chains of non-empty subsets of $W_0\cap \EE$ and $\lambda$ and $\rho$ are defined as follows: We define $\lambda(V_\bullet)$ as the concatenation $V_\bullet \subset W_\bullet$ and we define $\rho(V_\bullet)$ as $V_0\subsetneq \cdots \subsetneq V_q$, where $q\geq 0$ is the largest index such that $V_q\subset W_0\cap \EE$; as $V_\bullet \in \CC$, such an index must exist. By the adjunction, $\mathrm{Nerve}\,\DD \simeq \mathrm{Nerve}\,(W_\bullet/\CC)$. The nerve of $\DD$ is the double-subdivision of a (non-empty) simplex and thus contractible. Thus, $\CC \to \pi^{-1}(\EE)$ induces a weak equivalence on nerves. 

Let $\DD'$ be the full subcategory of $\pi^{-1}(\EE)$ of all chains $W_\bullet$ with $W_l \subset \EE$. There is an obvious inclusion $\DD' \to \CC$. This has a a right adjoint $s\colon \CC \to  \DD'$ with $s(W_\bullet) = W_0 \subsetneq \cdots \subsetneq W_q$ for $q\geq 0$ the largest index with $W_q\subset \EE$. Thus, 
\[\mathrm{Nerve}\, \pi^{-1}(\EE) \simeq \mathrm{Nerve}\, \CC \simeq \mathrm{Nerve}\, \DD'\]
and we only need to show that $\mathrm{Nerve}\,\DD'$ is contractible. As $\EE$ is neither $\un$ nor $\un \setminus \{k\}$, we have $\mathrm{Nerve}\,\DD' \cong \Sd^2 \mathrm{Nerve}\, \EE \simeq \ast$. This completes the proof. 
\end{proof}

Denote $\pi^{-1}(i) \cap V_+ \pi^{-1}(\underline{i-1})$ for $i\geq 1$ for short by $X_i^k$. We want to show that the nerve of $X_i^k$ is contractible unless $k=n$ and $i$ equals $n-1$ or $n$. 

Define $\overline{X}_i^k$ as the poset of chains $W_0\subsetneq \cdots \subsetneq W_l$ in $X_i^k$ such that $W_0 = \{i\}$. There is a left adjoint $\lambda$ to the inclusion $\overline{X}_i^k \to X_i^k$, defined by 
\[\lambda(W_0\subsetneq \cdots \subsetneq W_l) = (\{i\} \subsetneq W_0\subsetneq \cdots \subsetneq W_l)\]
for chains not in $\overline{X}_i^k$ and $\lambda|_{\overline{X}_i^k} = \id_{\overline{X}_i^k}$. Thus, the nerves of $X_i^k$ and $\overline{X}_i^k$ are homotopy equivalent.

\begin{lemma}We have
\[\overline{X}_i^k \cong 
\begin{cases}
c(\Sd^2\partial \Delta[n-1]) \setms c(\Sd^2d_{01\dots(i-1)}\Delta[n-1]) & \text{ for } k= i \\
c(\Sd^2\Lambda^k[n-1]) \setms c(\Sd^2d_{01\dots(i-1)}\Delta[n-1]) & \text{ for } k < i \\
c(\Sd^2\Lambda^{k-1}[n-1]) \setms c(\Sd^2 d_{01\dots(i-1)}\Delta[n-1]) & \text{ for } k > i
\end{cases}\]
Here, $d_{01\dots(i-1)}$ is the face operator induced by the map $[n-1-i] \to [n-1], m\mapsto m+i$; the face is understood to be empty if $i=n$. Furthermore, if $\EE \subset \CC$ is a full subcategory, we denote by $\CC \setminus \EE$ the full subcategory of $\CC$ with objects $\Ob\CC \setminus \Ob\EE$.
\end{lemma}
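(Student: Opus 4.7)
The plan is to make the bijection completely explicit by stripping the initial vertex $\{i\}$ from each chain in $\overline{X}_i^k$ and then translating the three combinatorial side conditions.

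First I would write out $\overline{X}_i^k$ as the poset of chains
\[\{i\}=W_0\subsetneq W_1\subsetneq\cdots\subsetneq W_l\]
of non-empty subsets of $\underline{n}$ with $l\geq 1$ (any chain of length $0$ is just $\{i\}$, whose minimum is $i$, hence is not in $V_+\pi^{-1}(\underline{i-1})$), satisfying (a) $W_l\neq\underline{n}$, (b) $W_l\neq\underline{n}\setms\{k\}$, and (c) $W_l\cap\underline{i-1}\neq\emptyset$. Condition (c) captures membership in $V_+\pi^{-1}(\underline{i-1})$: among the subchains of $W_\bullet$, the ones lying in $\pi^{-1}(\underline{i-1})$ are precisely those whose smallest set contains something in $\underline{i-1}$, and since $W_0=\{i\}$ has minimum $i$, this forces $W_j\cap\underline{i-1}\neq\emptyset$ for some $j\geq 1$, equivalently (by monotonicity) $W_l\cap\underline{i-1}\neq\emptyset$.

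Next I would define the map
\[\Phi\colon\overline{X}_i^k\longrightarrow\{\text{non-empty chains of non-empty subsets of }\underline{n}\setms\{i\}\}\]
by $\Phi(\{i\}\subsetneq W_1\subsetneq\cdots\subsetneq W_l)=(W_1\setms\{i\}\subsetneq\cdots\subsetneq W_l\setms\{i\})$. This is well-defined (each $W_j\setms\{i\}$ is non-empty for $j\geq 1$ since $W_j\supsetneq\{i\}$, and the inclusions remain strict), it is obviously order-preserving, and its inverse simply prepends $\{i\}$ to a chain after re-inserting $i$ into each set; so $\Phi$ is an isomorphism of posets onto its image. Composing with the order-preserving bijection $\sigma_i\colon\underline{n-1}\xrightarrow{\cong}\underline{n}\setms\{i\}$, $m\mapsto m$ for $m<i$ and $m\mapsto m+1$ for $m\geq i$, I obtain an isomorphism from $\overline{X}_i^k$ onto a sub-poset of chains $U_0\subsetneq\cdots\subsetneq U_{l-1}$ of non-empty subsets of $\underline{n-1}$. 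It then remains only to rewrite conditions (a), (b), (c).

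The translations are direct: (a) becomes $U_{l-1}\neq\underline{n-1}$, which is the condition cutting out $c(\Sd^2\partial\Delta[n-1])$ inside all chains. Condition (c), $W_l\not\subseteq\{i+1,\dots,n\}$, pulls back under $\sigma_i^{-1}$ to $U_{l-1}\not\subseteq\{i,i+1,\dots,n-1\}$, which is exactly the complement of $c(\Sd^2 d_{01\ldots(i-1)}\Delta[n-1])$. Condition (b) is where the three cases split: if $k=i$, then $\underline{n}\setms\{k\}\not\ni i$ so (b) is automatic, giving the first case; if $k<i$, then $\sigma_i^{-1}$ sends $\underline{n}\setms\{i,k\}$ to $\underline{n-1}\setms\{k\}$, giving the $\Lambda^k[n-1]$ constraint; if $k>i$, then $\sigma_i^{-1}$ sends $\underline{n}\setms\{i,k\}$ to $\underline{n-1}\setms\{k-1\}$, giving the $\Lambda^{k-1}[n-1]$ constraint. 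Putting the three conditions together in each case yields precisely the three formulas in the lemma. There is no real obstacle here beyond careful index bookkeeping across the order-shift induced by $\sigma_i$ when $k>i$.
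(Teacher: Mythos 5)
Your proposal is correct and follows essentially the same route as the paper's own proof: strip off the initial vertex $\{i\}$, delete $i$ from the remaining sets (the paper's phrase is ``deleting all occurrences of $i$''), identify $\underline{n}\setminus\{i\}$ with $\underline{n-1}$, and translate the three defining conditions. You are more explicit than the paper in spelling out the re-indexing bijection $\sigma_i$ and in checking that stripping $\{i\}$ is a well-defined poset isomorphism, but the underlying argument is identical.
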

\begin{proof}
By deleting all occurrences of $i$, we can identify $\overline{X}_i^k$ with the poset of all chains $W_\bullet = (W_0\subsetneq \cdots \subsetneq W_l)$ in $\un$ such that $W_l$ is neither $\un \setminus \{i\}$ nor $\un \setminus \{i,k\}$, $W_l$ contains a $j<i$ and none of the sets contains $i$. 

Denote the poset of all $W_\bullet$ such that $W_l$ is neither  $\un \setminus \{i\}$ nor $\un \setminus \{i,k\}$ and none of the sets contains $i$ by $Y_i^k$. We have 
\[Y_i^k \cong 
\begin{cases}
c\Sd^2\partial \Delta[n-1] & \text{ for } k= i \\
c\Sd^2\Lambda^k[n-1]& \text{ for } k < i \\
c\Sd^2\Lambda^{k-1}[n-1] & \text{ for } k > i
\end{cases}\]
The poset of all chains not containing a $j<i$ forms the subcategory $c\Sd^2 d_{01\dots(i-1)}\Delta[n-1]$. 
\end{proof}

\begin{lemma}
Let $L$ be a (topologically realized) simplicial complex and $K \subset L$ be a \emph{full} subcomplex. This means that any collection $v_0,\dots, v_i$ of $0$-simplices in $K$ spans a simplex in $K$ if it spans a simplex in $L$. Then 
\[L \setminus K \simeq L\setminus \mathrm{st}(K),\]
 where $\mathrm{st}(K)$ is the open star of $K$, the union of all interiors of simplices in $L$ having non-empty intersection with $K$.
\end{lemma}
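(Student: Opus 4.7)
The plan is to construct an explicit strong deformation retraction of $L\setminus K$ onto $L\setminus\mathrm{st}(K)$. First I would observe that $L\setminus\mathrm{st}(K)$ coincides with the underlying space $|L_{\bar K}|$ of the full subcomplex $L_{\bar K}\subset L$ spanned by the vertices of $L$ not in $K$, since the open simplices disjoint from $\mathrm{st}(K)$ are exactly those with no vertex in $K$; this identification does not yet use the fullness hypothesis on $K$.

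For the retraction, let $\lambda_v\co|L|\to[0,1]$ denote the barycentric coordinate at a vertex $v$ and set $s(x)=\sum_{v\in K}\lambda_v(x)$. The fullness hypothesis is used here decisively: it ensures that $s(x)=1$ if and only if $x\in|K|$. Indeed, if $s(x)=1$, then the unique open simplex of $L$ containing $x$ has all of its vertices in $K$, and fullness forces this simplex itself to belong to $K$. In particular $s(x)<1$ for every $x\in L\setminus K$, and the formula
\[
H(x,u) \;=\; \sum_{v\in K}(1-u)\lambda_v(x)\,v \;+\; \sum_{v\notin K}\frac{1-(1-u)s(x)}{1-s(x)}\,\lambda_v(x)\,v,
\]
with the second sum understood as $\sum_{v\notin K}\lambda_v(x)\,v$ when $s(x)=0$, defines a map $H\co(L\setminus K)\times[0,1]\to L\setminus K$. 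The coefficients are non-negative and sum to $1$, so $H(x,u)$ lies in the same closed simplex of $L$ as $x$, and the identity $s(H(x,u))=(1-u)s(x)<1$ keeps the image inside $L\setminus K$.

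It remains to verify the standard properties: $H(\,\cdot\,,0)=\mathrm{id}$, $H(x,1)\in|L_{\bar K}|=L\setminus\mathrm{st}(K)$ for all $x$, and $H(x,u)=x$ whenever $x\in|L_{\bar K}|$ (equivalently $s(x)=0$), which together exhibit $H$ as a strong deformation retraction of $L\setminus K$ onto $L\setminus\mathrm{st}(K)$. The only real point to verify is continuity of $H$, but this is immediate from the continuity of $s$ and of each $\lambda_v$ on $|L|$, since the denominator $1-s(x)$ is bounded away from $0$ on compact subsets of the open set $L\setminus K$. No individual step presents a serious obstacle; the content of the lemma is packaged in the formula above, and fullness enters solely through the identification of the locus $\{s=1\}$ with $|K|$.
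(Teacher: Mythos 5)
Your proof is correct and is essentially the same as the paper's: both produce the straight-line deformation retraction (in barycentric coordinates) that scales down the coordinates at vertices of $K$ and renormalizes, with fullness guaranteeing that the locus $\{s=1\}$ is exactly $|K|$ so the homotopy never leaves $L\setminus K$. The paper writes the same map simplex-by-simplex in homogeneous coordinates and appeals to compatibility under restriction to faces, whereas you package it as a single global formula, but the underlying geometric idea is identical.
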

\begin{proof}
It is enough to find a deformation retraction of $L \setms K$ onto $L\setms \mathrm{st}(K)$. First consider the case $L = \Delta[i]$ and $K = d_{j+1\dots i}\Delta[i] = \Delta[j]$.\footnote{We use here the same symbol for the topological simplex as for the simplex in simplicial sets, but this should not be confusing.} In this case, we use 
\[ (\Delta[i]\setminus \Delta[j]) \times I \to \Delta[i]\setminus \Delta[j],\quad (t_0:\dots : t_i, s) \mapsto (st_0:\dots : st_j: t_{j+1}:\dots : t_i).\]
These are homogeneous coordinates, i.e.\ we implicitly normalize. For a general subsimplex $K$ of $\Delta[i]$, we multiply exactly the homogeneous coordinates corresponding to the possibly non-zero coordinates in $K$ by $s$ instead.

In the general case, it is enough to define the map $(L\setminus K) \times I \to L\setminus K$ on every single (half-open) simplex in a way compatible with restriction to subsimplices. As $K$ is full, the intersection of $K$ with an arbitrary simplex $\Delta$ in $L$ is a subsimplex of $\Delta$. Thus, we can use the map described above. 
\end{proof}

These two lemmas implies the following result: 

\begin{lemma}
The nerve of the category $\pi^{-1}(i) \cap V_+\pi^{-1}(\underline{i-1})$ is contractible for $i\geq 1$ unless $k=n$ and $i$ equals $n-1$ or $n$. 
\end{lemma}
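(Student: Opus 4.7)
The plan is to combine the three preceding lemmas and finish with a short geometric case check. First I would use the adjunction noted just before the identification lemma to replace $\pi^{-1}(i) \cap V_+\pi^{-1}(\underline{i-1})$ by the homotopy equivalent subposet $\overline{X}_i^k$ of chains with bottom element $\{i\}$. The identification lemma then writes $\overline{X}_i^k$ as the set-theoretic difference $Y_i^k \setms c\Sd^2 d_{01\dots(i-1)}\Delta[n-1]$, where $Y_i^k \cong c\Sd^2 L$ and $L$ is one of $\partial\Delta[n-1]$, $\Lambda^k[n-1]$, or $\Lambda^{k-1}[n-1]$ according to whether $k = i$, $k < i$, or $k > i$. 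Upon geometric realization this becomes $|L| \setms |K|$, with $K$ the intersection of the back face $d_{01\dots(i-1)}\Delta[n-1]$ with $L$.

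Since $K$ is a full subcomplex of $L$, the preceding topological lemma deformation retracts $|L| \setms |K|$ onto the subcomplex $|L| \setms \mathrm{st}(|K|)$, consisting of simplices of $L$ whose vertex sets avoid $\{i, i+1, \ldots, n-1\}$ and therefore lie in $\{0, 1, \ldots, i-1\}$. The maximal such subcomplex of $\Delta[n-1]$ is the front face $F := d_{i\dots(n-1)}\Delta[n-1]$, an $(i-1)$-simplex. The proof thus reduces to verifying $F \subseteq L$ in each allowed case: when this holds, the retract is the contractible simplex $F$ itself, and the nerve of $\overline{X}_i^k$ is contractible.

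The final case check is a bookkeeping exercise. When $k = i < n$, $F$ is a proper face of $\partial\Delta[n-1] = L$. When $k < i$ with $i \geq 2$ (resp.\ $k > i$ with $(k,i) \neq (n, n-1)$), one can pick some $j \in \{0, \ldots, i-1\} \setms \{k\}$ (resp.\ some $j \in \{i, \ldots, n-1\} \setms \{k-1\}$) to exhibit $F \subset d_j\Delta[n-1] \subset L$. The degenerate subcases $k = 0, i = 1$ and $i = n$ are handled directly, since then the back face drops out or meets $L$ trivially, reducing everything to the realization of a horn. I expect the main obstacle to be precisely this case analysis, in particular making sure the excluded pairs $(k, i) \in \{(n, n-1), (n, n)\}$ appear for structural reasons: in those cases $F$ either coincides with $\Delta[n-1]$ or sits only in the missing face $d_{n-1}\Delta[n-1]$, so the retract $|L| \setms \mathrm{st}(|K|)$ is an $(n-2)$- or $(n-3)$-sphere rather than a point, confirming that contractibility genuinely fails there.
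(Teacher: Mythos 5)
Your overall strategy matches the paper's through the reduction to $\overline{X}_i^k$, the identification lemma, and the topological lemma on full subcomplexes. Where you diverge — and in a way I think is cleaner — is the final step. The paper keeps the open set $|L|\setminus|K|$ and performs explicit linear deformation retractions towards or away from a well-chosen vertex, whereas you observe that the deformation retract $|L|\setminus\mathrm{st}(|K|)$ is precisely the subcomplex of $L$ supported on the complementary vertex set $\{0,\dots,i-1\}$, so that the whole question becomes the combinatorial one of whether the front face $F$ lies inside $L$. This replaces a geometric case analysis with a purely set-theoretic inclusion check, and it makes the exceptional pairs $(n,n-1)$, $(n,n)$ visible for structural reasons (the retract is $\partial F$ or $\partial\Delta[n-1]$), which the paper only addresses implicitly.

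However, your case check for $k<i$ is wrong as written. You want $F \subset d_j\Delta[n-1]$, and since $F$ has vertex set $\{0,\dots,i-1\}$ while $d_j\Delta[n-1]$ omits the vertex $j$, the containment holds precisely when $j \notin \{0,\dots,i-1\}$, i.e.\ $j \geq i$. So the correct choice is $j \in \{i,\dots,n-1\}$ (nonempty iff $i<n$), and the condition $j\neq k$ is automatic because $k<i\leq j$; the set $\{0,\dots,i-1\}\setminus\{k\}$ you wrote would put $j$ among the vertices of $F$ and the condition $i\geq 2$ is not the relevant one. (That set is instead the condition for the \emph{back} face $d_{0\dots(i-1)}\Delta[n-1]$ to lie in a face of the horn, so I suspect you conflated $F$ with $K$ here.) Your $k>i$ case and the degenerate subcase $i=n$ (where $K$ is empty and the retract is all of $L$, itself a contractible horn when $k<n$) are handled correctly. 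With this one fix the argument goes through.
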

\begin{proof}
Observe first the following two simple facts:
\begin{enumerate}
\item $\mathrm{Nerve}\,c\,\mathrm{Nerve} \cong \mathrm{Nerve}$
\item If $\EE\subset \CC$ is a full subcategory such that $|\mathrm{Nerve} \CC|$ is a simplicial complex, then $|\mathrm{Nerve}(\CC\setminus \EE)| \cong |\mathrm{Nerve}(\CC)| \setminus \mathrm{st}(|\mathrm{Nerve}(\EE)|)$.
\end{enumerate}

From the last two lemmas, we get then
\[|\mathrm{Nerve}\,\overline{X}_i^k| \simeq
\begin{cases}
\partial \Delta[n-1] \setms d_{01\dots(i-1)}\Delta[n-1] & \text{ for } k= i \\
\Lambda^k[n-1| \setms d_{01\dots(i-1)}\Delta[n-1] & \text{ for } k < i \\
\Lambda^{k-1}[n-1] \setms d_{01\dots(i-1)}\Delta[n-1] & \text{ for } k > i.
\end{cases}\]
Denote the right hand side by $Z_i^k$.

In the case $k=i$, we remove from $\partial \Delta[n-1] \cong S^{n-2}$ a $\Delta[n-i-1]$. Thus, $Z_i^k$ is contractible unless $k = i=n$. Indeed, choose a vertex $j$ in $\Delta[n-i-1]$. Then there is a deformation retraction of $Z_i^k$ onto $d_j \Delta[n-1]$, with possibly a subsimplex removed; this is contractible. 

In the case $k\neq i$, the simplex $d_{01\dots(i-1)}\Delta[n-1]$ neither equals the vertex $k$ for $k<i$ nor $k-1$ for $k>i$ (as $d_{01\dots(i-1)}\Delta[n-1]$ is just one point only if $i = n-1$). If the tip of the horn is not in $d_{01\dots(i-1)}\Delta[n-1]$, a linear deformation towards the tip is the required contraction of $Z_i^k$. If the tip of the horn is in $d_{01\dots(i-1)}\Delta[n-1]$, deforming away from the tip gives a homotopy equivalence of $Z_i^k$ to $\partial \Delta[n-2]$ with one (non-empty) subsimplex removed. The same argument as before gives that this is contractible. 
\end{proof}

It remains to show the following lemma: 
\begin{lemma}If $k=n\geq 2$, then the nerve of $Y = \pi^{-1}((n-1)\to n) \cap V_+\pi^{-1}(\underline{n-2})$ is contractible. 
\end{lemma}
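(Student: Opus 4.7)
The plan is to exhibit $Y$ as a homotopy pushout
\[
Y \simeq \tilde Y_{n-1} \cup_{Y_{n-1,n}} \tilde Y_n
\]
in which one leg is contractible and the other gluing map is a weak equivalence, so the pushout collapses. Since $A_0$ is a non-empty subset of $\{n-1,n\}$, it belongs to $\{\{n-1\},\{n\},\{n-1,n\}\}$, and so $Y$ is the union of three full subposets $Y_{n-1}, Y_n, Y_{n-1,n}$. A morphism $A\le B$ in $\DD$ forces $B_0\subseteq A_0$, so there are no morphisms between $Y_{n-1}$ and $Y_n$, and $Y_{n-1,n}$ is a sieve inside each enlarged subposet $\tilde Y_i:=Y_i\cup Y_{n-1,n}$ for $i\in\{n-1,n\}$. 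This makes the displayed decomposition an honest pushout of full subcomplexes on nerves, hence a homotopy pushout.

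I would next simplify each $\tilde Y_i$ to $Y_i$ via the functor $\rho_i\colon\tilde Y_i\to Y_i$ which prepends $\{i\}$ to a chain whenever $A_0=\{n-1,n\}$. Since $A\le\rho_i(A)$ this $\rho_i$ is a left adjoint to the inclusion, so $\mathrm{Nerve}(\tilde Y_i)\simeq\mathrm{Nerve}(Y_i)$. Shifting chains by stripping the constant initial data, as in the treatment of $\overline X_i^k$ earlier in the proof, I would identify the three pieces with subposets of double subdivisions: $Y_{n-1,n}\cong c\Sd^2\partial\Delta[n-2]$ with nerve $\simeq S^{n-3}$; $Y_n$ is the subposet of $c\Sd^2\partial\Delta[n-1]$ of chains whose top vertex is not $\{n-1\}$; and $Y_{n-1}$ is the subposet of $c\Sd^2\partial\Delta[n-1]$ on the relabelled vertex set $\{0,\dots,n-2,n\}$ of chains whose top is neither $\{0,\dots,n-2\}$ nor $\{n\}$.

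For $Y_n$ I would apply Quillen's Theorem A to the top-vertex functor, contracting each undercategory by the append-top trick used earlier. This reduces $Y_n$ to the poset $P'_n$ of non-empty proper subsets of $\{0,\ldots,n-1\}$ avoiding $\{n-1\}$, which by the Topological Realizability Lemma has nerve homotopy equivalent to $|\partial\Delta[n-1]|\setminus\mathrm{st}(\{n-1\})\simeq S^{n-2}\setminus\{\text{pt}\}$, hence contractible. An analogous Quillen A calculation yields $\mathrm{Nerve}(Y_{n-1})\simeq S^{n-3}$: after removing the open stars of the three excluded vertices (the center, the barycenter of the opposite face and the vertex $\{n\}$) of $|\Sd\Delta[n-1]|\cong D^{n-1}$ one is left with a disk minus its diameter, which deformation retracts to the equatorial $(n-3)$-sphere.

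The main obstacle is the last step: showing that the attaching map $Y_{n-1,n}\hookrightarrow\tilde Y_{n-1}\xrightarrow{\rho_{n-1}} Y_{n-1}$ is itself a weak equivalence. Under the Quillen A identifications this becomes the inclusion into $P'_{n-1}$ of the subposet of subsets containing the element $n$, and a second left adjoint $P'_{n-1}\to\text{image}$ given by $S\mapsto S\cup\{n\}$ exhibits that inclusion as an adjoint equivalence. Verifying the compatibility, namely that the prepend-$\{n-1\}$ adjunction on the chain side and the add-$n$ adjunction on the subset side correspond under the top-vertex Quillen A equivalence, is the delicate part of the argument. Once established, the homotopy pushout $Y\simeq Y_{n-1}\cup_{Y_{n-1,n}}^h Y_n$ has contractible right leg ($Y_n\simeq\ast$) and weak-equivalence left attaching map, and so is contractible.
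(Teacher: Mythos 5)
Your proposal is correct, but it takes a genuinely different route from the paper's. The paper uses a \emph{three}-piece decomposition: $Y_0$ (chains with $W_0=\{n\}$), $Y_1$ (chains with $W_0=\{n-1\}$) and a ``collar'' $Y_2$ (chains with $W_0$ or $W_1$ equal to $\{n-1,n\}$). It identifies $\mathrm{Nerve}\,Y_2$ with $\Sd^2\partial\Delta[n-2]\times\Sd\Delta[1]$ and the intersections with copies of $\Sd^2\partial\Delta[n-2]$, then passes to geometric realizations and recognizes the resulting colimit $D^{n-2}\cup_{S^{n-3}}S^{n-3}\times I\cup_{S^{n-3}}S^{n-3}\times I$ as a disk. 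You instead use the two-piece cover by the full subposets $\tilde Y_{n-1},\tilde Y_n$ (observing correctly that $Y_{n-1,n}$ is a sieve in each, and that every morphism chain stays in one piece, so the nerve square is an honest pushout along cofibrations), collapse each $\tilde Y_i$ to $Y_i$ by a poset adjunction, and then argue at the level of the single subdivision via Quillen's Theorem~A and a further poset adjunction on $P'_{n-1}$. The advantage of your approach is that it avoids the explicit topological identification of $Y_2$ as a product and of the gluing as a disk: you only need $Y_n$ contractible and the attaching map $Y_{n-1,n}\to\tilde Y_{n-1}$ to be a weak equivalence, and then left properness of $\sSet$ finishes it. The trade-off is that you must carry out Quillen~A in three places (checking that each comma category $\tau/p$ has the singleton chain $(p)$ as a minimal element and is hence contractible) and verify the commuting square relating the prepend-$\{n-1\}$ map upstairs to the $T\mapsto T\cup\{n\}$ map downstairs; the latter is a short direct computation (the top vertex of $\rho_{n-1}(W)$ after deleting $n-1$ is $(W_l\setminus\{n-1,n\})\cup\{n\}$), so it is less delicate than you suggest. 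Two small remarks: your side computation that $\mathrm{Nerve}(Y_{n-1})\simeq S^{n-3}$ is phrased a bit loosely (``disk minus its diameter'' after removing three open stars from $|\Sd\Delta[n-1]|$), but that computation is in fact not needed for your pushout argument, so this does not affect correctness; and the degenerate case $n=2$, where $Y_{n-1,n}$ and $Y_{n-1}$ are empty, is automatically handled by your argument since $\emptyset\to\emptyset$ is a weak equivalence.
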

\begin{proof}
Let $W_\bullet = (W_0\subsetneq \cdots \subsetneq W_l)$. Then $W_\bullet\in Y$ if and only if 
\begin{itemize}
\item $W_0$ contains no elements but $(n-1)$ and $n$,
\item $W_l \neq \un$ and $W_l \neq \underline{n-1}$,
\item there exists $j\in W_l$ with $j<n-1$.
\end{itemize} 
Denote the subcategory of those $W_\bullet$ in $Y$ with $W_0 = \{n\}$ by $Y_0$. By deleting $n$, this can be identified with $c\Sd^2\partial \Delta[n-1]\setms\{n-1\}$. 

Denote the subcategory of those $W_\bullet$ in $Y$ with $W_0 = \{n-1\}$ by $Y_1$. By deleting $(n-1)$, this can be identified with $c\Sd^2\Lambda^{n-1}[n-1]\setms\{n-1\}$.

Denote the subcategory of those $W_\bullet$ in $Y$ with $W_0$ or $W_1$ equaling $\{n-1,n\}$ by $Y_2$. This is isomorphic to $(c\Sd^2 \partial \Delta[n-2])\times (c\Sd \Delta[1])$, where the second coordinate corresponds to $W_0$ being $\{n\}$, $\{n-1, n\}$ or $\{n-1\}$. 

The intersection of $Y_0$ and $Y_2$ is given by all the $W_\bullet$ in $Y$ with $W_0 = \{n\}$ and $W_1 = \{n-1,n\}$. By deleting all entries of the form $(n-1)$ and $n$, the intersection $Y_0\cap Y_2$ is isomorphic to $c\Sd^2 \partial \Delta[n-2]$. 

By a similar argument $Y_1\cap Y_2 \cong c\Sd^2 \partial \Delta[n-2]$.

In total, we see that 
\[\mathrm{Nerve}\, Y = \mathrm{Nerve}\, Y_0 \cup_{\Sd^2 \partial \Delta[n-2]} \mathrm{Nerve}\, Y_2 \cup_{\Sd^2 \partial \Delta[n-2]} \mathrm{Nerve}\, Y_1.\] By the identifications above, this is after geometric realization homeomorphic to  
\[D^{n-2} \cup_{S^{n-3}} S^{n-3}\times I \cup_{S^{n-3}} S^{n-3} \times I.\]
This colimit in turn is homeomorphic to $D^{n-2}$, which is contractible. 
\end{proof}

\bibliographystyle{alpha}
\bibliography{../Chromatic}
\end{document}